\newtheorem{theorem}{Theorem}
\newtheorem{corollary}[theorem]{Corollary}
\newtheorem{definition}[theorem]{Definition}
\newtheorem{lemma}[theorem]{Lemma}
\newtheorem{proposition}[theorem]{Proposition}
\newtheorem{remark}[theorem]{Remark}
\numberwithin{equation}{section}
\numberwithin{theorem}{section}
\newcommand{\Id}{{\mathrm{I}}}
\newcommand{\nv}{\boldsymbol}
\newcommand{\1}[1]{{\boldsymbol 1_{\{#1\}}}}
\newcommand{\oo}{{\boldsymbol 1}}
\renewcommand{\P}{{\mathbb P}}
\newcommand{\R}{{\bf R}}
\renewcommand{\S}{{\bf S}}
\newcommand{\M}{{\bf M}}
\newcommand{\Ccal}{{\mathcal C}}
\newcommand{\Ecal}{{\mathcal E}}
\newcommand{\Lcal}{{\mathcal L}}
\newcommand{\CNeu}{{\mathcal D}}
\newcommand{\HNeu}{{H_{\rm Neu}^{1,2}}}
\DeclareMathOperator{\Diag}{Diag}
\DeclareMathOperator{\tr}{Tr}
\DeclareMathOperator{\rk}{rank}
\DeclareMathOperator{\adj}{adj}
\begin{document}

\title{Matrix-valued Bessel processes\thanks{The author would like to thank Dmitriy Drusvyatskiy, Damir Filipovi\'c, Piotr Graczyk, Eberhard Mayerhofer, Jim Renegar and Lioudmila Votrikova for stimulating discussions and several comments that led to substantial improvements of this manuscript. Thanks are also due to an anonymous referee whose detailed comments led to several improvements. This research was undertaken while the author was at the Swiss Finance Institute at EPFL and was funded in part by the European Research Council under the European Union's Seventh Framework Programme (FP/2007-2013) / ERC Grant Agreement n. 307465-POLYTE.}}
\author{Martin Larsson\thanks{ETH Zurich, Departement of Mathematik, Switzerland. Email: martin.larsson@math.ethz.ch.}}
\date{}
\maketitle

\begin{abstract}
This paper introduces a matrix analog of the Bessel processes, taking values in the closed set $E$ of real square matrices with nonnegative determinant. They are related to the well-known Wishart processes in a simple way: the latter are obtained from the former via the map $x\mapsto x^\top x$. The main focus is on existence and uniqueness via the theory of Dirichlet forms. This leads us to develop new results of potential theoretic nature concerning the space of real square matrices. Specifically, the function $w(x)=|\det x|^\alpha$ is a weight function in the Muckenhoupt~$A_p$ class for $-1<\alpha\le 0$ ($p=1$) and $-1<\alpha<p-1$ ($p>1$). The set of matrices of co-rank at least two has zero capacity with respect to the measure $m(dx)=|\det x|^\alpha dx$ if $\alpha>-1$, and if $\alpha\ge 1$ this even holds for the set of all singular matrices. As a consequence we obtain density results for Sobolev spaces over (the interior of) $E$ with Neumann boundary conditions. The highly non-convex, non-Lipschitz structure of the state space is dealt with using a combination of geometric and algebraic methods.
\end{abstract}

\section{Introduction and preliminaries}

The Wishart processes, taking values in the cone $\S^d_+$ of positive semidefinite $d\times d$~matrices, constitute a class of matrix-valued Markov processes generalizing the squared Bessel (BESQ) processes. They were first introduced by Bru~\cite{Bru:1989,Bru:1991}, and have subsequently been studied further and extended in various directions by a number of authors, for example~\cite{Donati-Martin:2004vn,Graczyk/Vostrikova:2007,Donati-Martin:2008,Cuchiero:2011uq}. They have also found use in applied contexts, for instance in finance~\cite{Gourieroux/Sufana:2007,DaFonseca/etal:2008}.

The existence of a well-behaved matrix analog of the BESQ processes raises the question of whether the same is true for the Bessel (BES) processes. Since the Wishart process is $\S^d_+$-valued, a natural candidate is its positive semidefinite square root. This was considered in~\cite{Graczyk/Malecki:2012}, where the resulting Markov process is described via the dynamics of its eigenvectors and eigenvalues. However, as was pointed out already by Bru~\cite{Bru:1991}, it appears difficult to obtain the dynamics of the process itself, or to succinctly describe its generator.

The aim of the present paper is to show that a more well-behaved class of processes is obtained by passing to the larger state space
\[
E = \{x \in \M^d: \det x \ge 0\},
\]
where $\M^d$ is the Euclidean space of all $d\times d$ real matrices, endowed with the usual inner product $x\bullet y=\tr(x^\top y)$ and norm $\|x\|=\sqrt{x\bullet x}$. The \emph{matrix-valued Bessel process with parameter $\delta>0$ and matrix dimension $d$}, abbreviated ${\rm BESM}(\delta,d)$, will be an $E$-valued Markov process whose generator is given by
\begin{equation} \label{eq:A}
\Lcal f = \frac{1}{2} \Delta f + \frac{\delta-1}{2}  x^{-\top} \bullet \nabla f,
\end{equation}
where $x^{-\top}=(x^{-1})^\top$, $\nabla$ is the $d\times d$ matrix with elements $\partial_{x_{ij}}$ (so that for $f\in C^1(\M^d)$, $\nabla f$ is its gradient), and $\Delta = \nabla\bullet\nabla=\sum_{i,j}\partial^2_{x_{ij}x_{ij}}$ is the Laplacian. To make $x^{-1}$ globally defined, we set to zero for singular $x$ (this choice is arbitrary and inconsequential.) Notice that for $d=1$, $\Lcal$ is the generator of the ${\rm BES}(\delta)$ process.

Existence of the BESM$(\delta,d)$ process is proved via the theory of Dirichlet forms, which is able to nicely handle the singular drift term of $\Lcal$. The crucial fact is that $\Lcal$ is a symmetric operator with respect the measure $m(dx)=|\det x|^{\delta-1}dx$, which is a consequence of an integration by parts formula (Theorem~\ref{T:ibp}). The Dirichlet form is then given by the simple expression
\[
\Ecal(f,g) = \int_E \nabla f \bullet \nabla g\ m(dx).
\]

Uniqueness is a much more delicate issue. Relying on density results for certain Sobolev spaces with Neumann boundary condition (Theorem~\ref{T:MU}), we establish {\em Markov uniqueness} in the sense of Eberle~\cite{Eberle:1999uq}. Obtaining these density results is a nontrivial matter. In particular, we are led to prove several results, interesting in their own right, about the measure $m$ and its interaction with the state space. Specifically, we show that the matrices of co-rank at least two form a set of zero capacity with respect to~$m$, and that if $\delta\ge 2$, the set of all singular matrices has zero capacity (Theorem~\ref{T:cap0}). Moreover, we prove that $|\det x|^\alpha$ is locally Lebesgue integrable on $\M^d$ precisely when $\alpha>-1$ (Theorem~\ref{T:Radon}), and that it is a weight function in the Muckenhoupt~$A_p$ class when $\alpha\in(-1,0]$ and $p=1$, and when $\alpha\in(-1,p-1)$ and $p>1$ (Theorem~\ref{T:Ap}). This exactly parallels the well-known situation for the weight function~$t^\alpha$ on~$\R$.

The proofs of these results require some effort. The difficulties mainly arise due to the highly non-convex, non-Lipschitz structure of the state space~$E$. In fact, the interior~$E^o$ does not even lie on one side of its boundary $\partial E$, as can be seen by considering the lines through the origin, $\{tx:t\in\R\}$, $x\in\M^d\setminus\{0\}$: If $d$ is even, each line lies either entirely inside $E$, or entirely outside $E^o$. These issues are resolved via a combination of geometric methods (relying on the stratification of $E$ into smooth manifolds $M_k$ consisting of rank~$k$ matrices) and algebraic methods (mainly the $QR$-decomposition and estimates of the determinant function near $M_k$.) One would expect similar techniques to be useful for the analysis of Markov processes on more general stratified spaces; some of the groundwork for this is laid in~\cite{Drusvyatskiy/Larsson:2015}, and the case of so-called polynomial preserving diffusions is treated in~\cite{FilipovicLarsson15}.

Let us say something about why the BESM processes are natural analogs of the BES processes, other than the resemblance of their generators. The main reason is that the process $X^\top X$, where $X$ is ${\rm BESM}(\delta,d)$, is a Wishart process with parameter $\alpha=d-1+\delta$, denoted ${\rm WIS}(\alpha,d)$, see Theorem~\ref{T:WIS}. As a consequence, $\|X\|$ is ${\rm BES}(d\alpha)$, and $\det X$ is a time-changed ${\rm BES}(\delta)$ process. Moreover, just as in the scalar case, $X$ is the weak solution of a stochastic differential equation for~$\delta>1$, while it is not even a semimartingale for $0<\delta<1$. A general discussion of BES and BESQ processes is available in~\cite[Chapter~XI]{Revuz:1999zr}. For a specialized treatment of the case $0<\delta<1$, see~\cite{Biane/Yor:1987,Bertoin:1990}.

A second motivation, which was the original ``clue'' that led us to consider the generator $\Lcal$, is as follows. Let $X$ be an $\M^d$-valued Brownian motion starting from $\Id$ (the identity matrix), and let $\tau_0$ be the first time $\det X_t$ hits zero. Then $\det X_{t\wedge\tau_0}$ is a martingale, and we may use it to change the probability measure. An application of Girsanov's theorem, using the identity $\nabla \ln \det(x) = x^{-\top}$, shows that under the new measure, the process
\[
W_t = X_t - X_0 - \int_0^t X^{-\top}_s ds, \qquad t\ge 0,
\]
is $\M^d$-valued Brownian motion. Thus $X$ becomes a Markov process whose generator is $\Lcal$ with $\delta=3$, and its determinant is positive by construction. This is fully analogous to the well-known construction via Doob's $h$-transform of the ${\rm BES}(3)$ process as Brownian motion ``conditioned to stay positive''.

In addition to the notation already introduced above, the following conventions will be in force throughout the paper.

\begin{enumerate}
\item[$\bullet$] As usual, the symbols $C(U)$; $C_c(U)$; $C^k(U)$ denote the spaces of continuous; continuous and compactly supported; $k$-times continuously differentiable functions on a subset $U\subset\M^d$ equipped with the relative topology. Writing $C(U;V)$, etc., means that the functions take values in the topological space $V$. Note that $U$ may be closed in $\M^d$, e.g.~if $U=E$. In this case, the compact sets need not be bounded away from~$\partial U$.

\item[$\bullet$] We set $M_k=\{x\in\M^d:\rk x=k\}$, $k=0,\ldots,d$. Then $M_k$ is a smooth manifold of dimension $d^2-(d-k)^2$, see~\cite{Helmke/Shayman:1992}, and we have $\partial E = \cup_{k\le d-1}M_k$. For any $v\in \M^d$ we say that \emph{$v$ is tangent to $\partial E$ at $x\in M_k\subset\partial E$} if $v$ lies in the tangent space of $M_k$ at $x$. We refer to~\cite{Lee:2003} for background on differential geometry.

\item[$\bullet$] $O(d)$ is the orthogonal group over $\R^d$, and $T(d)$ is the group of upper-triangular $d\times d$ real matrices with strictly positive diagonal entries. The set of nonsingular matrices (i.e., the general linear group) is homeomorphic to $O(d)\times T(d)$ via the $QR$-decomposition. The following change of variable formula, which is a consequence of the uniqueness of Haar measure, follows directly from \cite[Proposition~5.3.2]{Faraut:2008fk} and monotone convergence.

\begin{lemma} \label{L:c-o-v}
Let $f:\M^d\to \R$ be nonnegative and measurable. Then
\[
\int_{\M^d} f(x) |\det x|^{-d} dx = \int_{O(d)\times T(d)} f(QR) \mu(dQ) \prod_{i=1}^d R_{ii}^{-i} dR,
\]
where $\mu$ is proportional to normalized Haar measure on $O(d)$, and $dR=\prod_{i\le j}dR_{ij}$.
\end{lemma}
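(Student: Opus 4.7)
The plan is to recognize both sides of the claimed identity as integrals of $f$ against a Haar measure on the general linear group $GL(d,\R)$, and then invoke the uniqueness of Haar measure. This is the standard route to change-of-variable formulas of this type, so the work reduces to verifying invariance properties on each side and then extending from $GL(d,\R)$ to all of $\M^d$.

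First I would verify that $|\det x|^{-d}\,dx$ is a (bi-)invariant Haar measure on $GL(d,\R)$. For fixed $a\in GL(d,\R)$, the substitution $y=ax$ has Jacobian $|\det a|^d$ (left multiplication by $a$ acts on each of the $d$ columns of $x$ independently, contributing one factor of $|\det a|$ per column), while $|\det y|=|\det a|\cdot|\det x|$; the two factors cancel to give left-invariance, and the analogous computation with $y=xa$ gives right-invariance. Hence $|\det x|^{-d}\,dx$ is a Haar measure on $GL(d,\R)$.

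Next I would use the $QR$-decomposition to identify $GL(d,\R)$ homeomorphically with $O(d)\times T(d)$ and realize the product measure $\mu(dQ)\otimes \prod_{i=1}^d R_{ii}^{-i}\,dR$ as another Haar measure on $GL(d,\R)$. Here $\mu$ is Haar on $O(d)$, and $\prod_{i=1}^d R_{ii}^{-i}\,dR$ is Haar on $T(d)$; the latter is exactly the content of Proposition~5.3.2 of~\cite{Faraut:2008fk}, which I would simply cite. Compactness of $O(d)$ ensures that the resulting product is genuinely a (bi-)invariant measure on $GL(d,\R)$. By uniqueness of Haar measure up to a positive multiplicative constant, the two measures must agree after rescaling; the rescaling can be absorbed into $\mu$, which the statement explicitly permits to be non-normalized.

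This proves the identity for all nonnegative measurable $f$ supported on the open dense set $GL(d,\R)$. To extend to general $f$ on $\M^d$, note that the set of singular matrices is the vanishing locus of the polynomial $\det$, hence has Lebesgue measure zero, and therefore contributes nothing to the left-hand side; the right-hand side already integrates only over $GL(d,\R)$ since the diagonal entries of $R$ are strictly positive. A standard monotone-convergence argument, approximating $f$ from below by simple functions, then promotes the identity to arbitrary nonnegative measurable $f$. I do not anticipate any serious obstacle here: the only nontrivial analytical input is the explicit form of the Haar measure on $T(d)$, which is supplied by the cited proposition.
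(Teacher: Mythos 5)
Your overall strategy — verify both sides are Haar-type measures, cite Faraut for the $T(d)$ density, invoke uniqueness of Haar measure, and finish with a monotone-convergence extension across the null set of singular matrices — is the same one the paper takes, and the Jacobian computation showing $|\det x|^{-d}\,dx$ is bi-invariant Haar on $GL(d,\R)$ is correct.

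There are, however, two points that need tightening. First, $T(d)$ is not unimodular, so "Haar measure on $T(d)$" is ambiguous: the density $\prod_{i=1}^d R_{ii}^{-i}$ is the \emph{right} Haar measure on $T(d)$ (the left Haar density is $\prod_i R_{ii}^{-(d-i+1)}$), and it is the right one that enters the factorization of Haar measure through a decomposition $G = KH$. Second, and more seriously, the sentence "Compactness of $O(d)$ ensures that the resulting product is genuinely a (bi-)invariant measure on $GL(d,\R)$" asserts the crux of the argument without proof. The pushforward of $\mu\otimes\nu_r$ under $(Q,R)\mapsto QR$ is manifestly invariant under left multiplication by $O(d)$ and right multiplication by $T(d)$, but its invariance under an arbitrary $g_0\in GL(d,\R)$ is not obvious (left-translating by $g_0$ scrambles both coordinates through the $QR$-decomposition of $g_0Q$), and compactness of $O(d)$ alone does not supply it. What does work is either Weil's integration formula for the unimodular group $G=KH$, or — cleanest here — reversing the direction of the uniqueness argument: observe that $|\det x|^{-d}\,dx$, transported to $O(d)\times T(d)$ via the inverse of $(Q,R)\mapsto QR$, is a Radon measure invariant under $(Q,R)\mapsto(Q_0Q,R)$ and $(Q,R)\mapsto(Q,RR_0)$ for all $Q_0\in O(d)$, $R_0\in T(d)$; any such measure is, by uniqueness of Haar on the two factors together with Fubini, a positive multiple of $\mu\otimes\nu_r$. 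This avoids having to show that $\mu\otimes\nu_r$ pushes forward to full Haar on $GL(d,\R)$, and makes the logic airtight without importing the Weil formula.
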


\item[$\bullet$] For $x\in\M^d$, we let $\adj x$ denote the \emph{adjugate matrix} of $x$ (i.e., the transpose of the matrix of cofactors). It satisfies the identities $x\adj x = (\det x)\Id$ and $\nabla\det(x)=\adj x$, so that in particular $x^{-\top}=\nabla\det(x)/\det x$ for nonsingular $x$. We also have $\adj x=0$ if and only if $\rk x\le d-2$.
\end{enumerate}

The rest of this paper is organized as follows. The BESM process and semigroup are defined, and proved to exist, in Section~\ref{S:DE}. Some fundamental properties, including the relation to the Wishart process, are discussed in Section~\ref{S:WIS}. Markov uniqueness is proved in Section~\ref{S:MU}. The crucial Theorems~\ref{T:cap0} and~\ref{T:Ap} are proved in Sections~\ref{S:cap0} and~\ref{S:A1}, respectively. The integration by parts formula (Theorem~\ref{T:ibp}) is proved in Appendix~\ref{A:ibp}, while Appendix~\ref{A:W12} and~\ref{A:tube} contain, respectively, some auxiliary results on Sobolev spaces and differential geometry.

\section{Definition and existence} \label{S:DE}

The definition of the BESM process is based on the differential operator $\Lcal$ in~\eqref{eq:A} acting on functions in $\CNeu$, where
\[
\CNeu = \left\{ f\in C^2_c(E) \ :\  x^{-\top}\bullet\nabla f \text{ is bounded} \right\}.
\]
As we will see momentarily, $(\Lcal,\CNeu)$ is a symmetric operator on $L^2(E,m)$, where the measure $m$ is given by
\[
m(dx) = |\det x|^{\delta-1}dx.
\]
(Occasionally $m$ will be viewed as a measure on $\M^d$, or on subsets other than $E$.) The inner product on $L^2(E,m)$ is denoted by $\langle \cdot,\cdot\rangle$. Specifically, we write
\[
\langle f,g\rangle = \int_E f(x)g(x)m(dx) \quad \text{and} \quad \langle F,G\rangle = \int_E F(x)\bullet G(x) m(dx),
\]
where $f,g\in L^2(E,m)$ and $F,G\in L^2(E,m;\M^d)$. The overlapping notation should not cause any confusion.

The following result shows that $m$ is a Radon measure on $\M^d$, and hence on $E$, when $\delta>0$. It implies in particular that we have $\CNeu\subset L^2(E,m)$.

\begin{theorem} \label{T:Radon}
Let $\alpha\in\R$ and define $w(x)=|\det x|^\alpha$. The function $w$ is locally integrable on $\M^d$ if and only if $\alpha>-1$.
\end{theorem}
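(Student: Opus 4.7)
The plan is to reduce both directions to the one-dimensional fact that $\int_0^\epsilon t^\alpha\,dt$ is finite if and only if $\alpha>-1$.

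For the sufficient direction ($\alpha>-1$), I would fix a compact $K\subset\M^d$ and apply Lemma~\ref{L:c-o-v} with $f(x)=\1{x\in K}|\det x|^{\alpha+d}$. Since $|\det(QR)|=\prod_i R_{ii}$ for $R\in T(d)$, this yields
\[
\int_K|\det x|^\alpha\,dx=\int_{O(d)\times T(d)}\1{QR\in K}\prod_{i=1}^d R_{ii}^{\alpha+d-i}\,\mu(dQ)\,dR.
\]
Because $\|R\|=\|QR\|$, compactness of $K$ forces $R_{ii}\in(0,C]$ and $|R_{ij}|\le C$ for $i<j$ on the domain of integration, for some finite $C$ depending on $K$. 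Integrating out $Q$ and the off-diagonal entries contributes a finite prefactor, while the diagonal entries reduce to the product $\prod_{i=1}^d\int_0^C t^{\alpha+d-i}\,dt$. The binding factor is $i=d$, with exponent $\alpha$, so everything is finite precisely when $\alpha>-1$.

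For the necessity direction ($\alpha\le-1$), I would exhibit a compact set where the integral of $w$ is infinite by localizing near a rank $d-1$ matrix, exploiting that $\det$ has nonvanishing gradient there and so plays the role of a local coordinate. Take $x_0=\Diag(1,\ldots,1,0)$, so that $\adj x_0=e_{dd}\neq 0$, where $e_{ij}$ denotes the matrix unit with a single $1$ in position $(i,j)$. Write $x=x_0+h$ and decompose $h=h'+h_{dd}e_{dd}$. The Schur-complement formula, valid for small $h$ since the upper-left $(d-1)\times(d-1)$ block of $x_0+h$ is then invertible, gives
\[
\det(x_0+h)=a(h')+b(h')h_{dd},
\]
where $b(h')$ is the determinant of that block; in particular $b$ is continuous with $b(0)=1$, while $a$ is continuous with $a(0)=0$ and even $a(h')=O(|h'|^2)$ since the first-order contribution $\adj x_0\bullet h'$ vanishes on $\{h_{dd}=0\}$. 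Choose $\epsilon>0$ small enough that $b(h')\ge 1/2$ and $|a(h')|\le\epsilon/4$ whenever $|h_{ij}|\le\epsilon$ for all $(i,j)\neq(d,d)$, and let $K$ be the closed box $\{x_0+h:|h_{ij}|\le\epsilon\}$. Applying Fubini by integrating $h_{dd}$ first and substituting $u=a(h')+b(h')h_{dd}$ yields
\[
\int_K|\det x|^\alpha\,dx=\int_{|h'|_\infty\le\epsilon}\frac{1}{b(h')}\int_{I(h')}|u|^\alpha\,du\,dh',
\]
where $I(h')=[a(h')-b(h')\epsilon,\,a(h')+b(h')\epsilon]\supset[-\epsilon/4,\epsilon/4]$. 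For $\alpha\le-1$ the inner integral is $+\infty$ for every admissible $h'$, so the full integral diverges.

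The sufficient direction is essentially a direct corollary of the $QR$-change of variables, which elegantly compensates for the non-Lipschitz geometry of $\{\det=0\}$. The substantive work is the necessity half: the main obstacle is selecting a compact set whose $w$-measure is infinite, and the natural choice is a neighborhood of a rank $d-1$ matrix where $\nabla\det$ is nonzero. The Fubini-plus-substitution argument above is a concrete incarnation of the implicit-function-theorem reduction to the one-dimensional case, avoiding the need to set up a global change of coordinates on $\M^d$.
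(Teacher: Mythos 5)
Your proof is correct. The sufficiency half is essentially the paper's argument: both of you reduce to one-dimensional integrals $\int_0^C t^{\alpha+d-i}\,dt$ via the $QR$ change of variables of Lemma~\ref{L:c-o-v}, and identify $i=d$ as the binding exponent. Where you genuinely diverge is in the necessity half. The paper stays inside the $QR$ framework: it takes the unit cube $K=\prod_{i\le j}(0,1)\subset T(d)$ and sets $A=O(d)\cdot K$, which is bounded (hence relatively compact) because $\|QR\|=\|R\|$, and the same change-of-variables identity then forces $\int_A w\,dx=\infty$ from the divergence of $\int_0^1 t^\alpha\,dt$. You instead localize near the rank-$(d-1)$ matrix $\Diag(1,\ldots,1,0)$, use the Schur complement to write $\det(x_0+h)=a(h')+b(h')h_{dd}$, and apply Fubini with the one-dimensional substitution $u=a(h')+b(h')h_{dd}$. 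Your route is slightly longer but conceptually sharper: it makes explicit that the non-integrability is driven by the top stratum $M_{d-1}$, where $\nabla\det\ne 0$ and $\det$ acts as a transverse coordinate, which is the same geometric picture underlying the tube-segment estimates in Section~\ref{S:cap0}. The paper's version is more economical because it never leaves the $QR$ coordinates and needs no local analysis of $\partial E$. One minor remark: the $O(|h'|^2)$ bound on $a$ is not needed for your argument — continuity of $a$ with $a(0)=0$ suffices to guarantee $|a(h')|\le\varepsilon/4$ on a small box, which is all you use.
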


\begin{proof}
Let $A \subset \M^d$ be relatively compact. Since $\partial E$ is a nullset, we may assume that $A\cap\partial E=\emptyset$. Then there is a rectangle $K\subset T(d)$, say $K=\times_{i\le j}I_{ij}$, of bounded open intervals $I_{ij}$ such that $I_{ii}\subset (0,\infty)$ and $I_{ij}\subset\R$ ($i<j$), and such that $A\subset O(d)\cdot K$. Hence, the change-of-variable formula in Lemma~\ref{L:c-o-v} yields
\begin{align*}
\int_A |\det x|^\alpha dx &\le \mu(O(d)) \bigg( \prod_{i<j} \int_{I_{ij}} dR_{ij}\bigg)\bigg( \int_K |\det R|^{\alpha+d} \prod_{i=1}^d R_{ii}^{-i}dR_{ii}\bigg) \\
&= \mu(O(d)) \bigg( \prod_{i<j} \int_{I_{ij}} dR_{ij}\bigg)\bigg( \prod_{i=1}^d \int_{I_{ii}} R_{ii}^{\alpha+d-i}dR_{ii}\bigg),
\end{align*}
where $\mu$ is proportional to normalized Haar measure on $O(d)$. The right side is finite, provided $\alpha>-1$. If on the other hand $\alpha\le-1$, take $I_{ij}=(0,1)$ for all $i\le j$, and set $A=O(d)\cdot K$. Then $A$ is relatively compact, but $\int_Aw(x)dx=\infty$.
\end{proof}

Consider the differential operator $\nabla^*$ given by
\[
\nabla^* G = - (\nabla + (\delta-1)x^{-\top})\bullet G, \qquad G \in C^1(E;\M^d).
\]
This notation is justified by the following integration by parts formula, which shows that $\nabla^*$ acts as an adjoint of $\nabla$. Together with the observation that $\Lcal = -\frac{1}{2}\nabla^*\nabla$, this will imply that $\Lcal$ is indeed a symmetric operator on $L^2(E,m)$.

\begin{theorem}[Integration by parts formula] \label{T:ibp}
Suppose $\delta>0$, and consider $f\in C^1_c(E)$ and $G\in C^1(E; \M^d)$. If $\delta\le 1$, assume that $G(x)$ is tangent to $\partial E$ at $x$ for all $x\in \partial E$. If $\delta<1$, assume in addition that $G(x)\bullet x^{-\top}$ is locally bounded. Then
\begin{equation}\label{eq:ibp}
\langle \nabla f, G\rangle  =  \langle f, \nabla^*G\rangle.
\end{equation}
\end{theorem}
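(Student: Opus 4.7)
The strategy is to reduce \eqref{eq:ibp} to the classical divergence theorem on the open set $E^o=\{\det x>0\}$, where the weight $w(x)=|\det x|^{\delta-1}=(\det x)^{\delta-1}$ is smooth, and then pass to the limit through a cutoff that removes a neighborhood of $\partial E=\{\det x=0\}$. The main difficulty is the resulting boundary correction, whose vanishing is precisely what dictates the different tangency/boundedness hypotheses on $G$ in the various regimes for $\delta$.

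First I would check that both sides of \eqref{eq:ibp} define absolutely convergent integrals. Using $\nabla\det x=\adj x$ and $\adj x=(\det x)\,x^{-\top}$ on $E^o$, the product rule gives
\[
\nabla^*G\cdot w = -(\nabla\bullet G)\,w - (\delta-1)\,(\det x)^{\delta-2}\,(\adj x\bullet G).
\]
For $\delta>1$ the second term is dominated by $C(\det x)^{\delta-2}$, which is locally integrable by Theorem~\ref{T:Radon}; for $\delta=1$ it vanishes; for $\delta<1$, the local boundedness of $x^{-\top}\bullet G$ combined with the local integrability of $w$ (again Theorem~\ref{T:Radon}) suffices. Since $f\in C^1_c(E)$, $\nabla f\bullet G\cdot w$ is also in $L^1$.

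Next, I would regularize using the cutoff $\chi_\epsilon(x)=\phi(\det x/\epsilon)$, where $\phi\in C^\infty([0,\infty);[0,1])$ vanishes on $[0,1]$ and equals one on $[2,\infty)$. Then $f\chi_\epsilon$ is compactly supported in $\{\det x\ge\epsilon\}\subset E^o$, where $w$ is smooth and $\nabla w=(\delta-1)wx^{-\top}$; the classical divergence theorem applied to $f\chi_\epsilon Gw$ yields
\[
\int_E\chi_\epsilon\,(\nabla f\bullet G)\,w\,dx + \int_E f\,(\nabla\chi_\epsilon\bullet G)\,w\,dx = \int_E f\chi_\epsilon\,\nabla^*G\,w\,dx.
\]
Dominated convergence with the $L^1$ majorants from the previous step sends the first and third integrals to $\langle\nabla f,G\rangle$ and $\langle f,\nabla^*G\rangle$ respectively as $\epsilon\downarrow 0$.

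The crux is to show that the cut-off error $R_\epsilon:=\int_E f(\nabla\chi_\epsilon\bullet G)w\,dx$ tends to zero. Since $\nabla\chi_\epsilon=\epsilon^{-1}\phi'(\det x/\epsilon)\,\adj x$ on $E$, the coarea formula combined with the substitution $s=\det x/\epsilon$ converts $R_\epsilon$ into
\[
R_\epsilon = \epsilon^{\delta-1}\!\int_0^\infty \phi'(s)\,s^{\delta-1}\,g(\epsilon s)\,ds,\qquad g(t):=\int_{\{\det x=t\}}f\,\frac{\adj x\bullet G}{\|\adj x\|}\,dS.
\]
For $\delta>1$, $g$ is bounded near $0$ and the prefactor $\epsilon^{\delta-1}$ gives $R_\epsilon\to 0$. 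For $\delta=1$, $R_\epsilon$ tends to $\int_{\partial E}f(\adj x\bullet G)/\|\adj x\|\,dS$, which vanishes because tangency of $G$ to $M_{d-1}$ forces $\adj x\bullet G\equiv 0$ on this codimension-one stratum. For $\delta<1$, the hypothesis that $x^{-\top}\bullet G=\adj x\bullet G/\det x$ is locally bounded yields $|\adj x\bullet G|=O(\det x)$ near $\partial E$, hence $|g(t)|=O(t)$ and $R_\epsilon=O(\epsilon^\delta)\to 0$. The principal obstacle is making the coarea decomposition and the estimate on $g$ rigorous near the low-rank strata $M_{\le d-2}$, where $\|\adj x\|$ vanishes and the level sets $\{\det x=t\}$ become singular; this is presumably where the capacity estimates of Theorem~\ref{T:cap0} and the tube lemmas of Appendix~\ref{A:tube} enter.
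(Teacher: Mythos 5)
Your approach is correct and runs essentially parallel to the paper's: both proofs regularize the integration by parts by excising a neighborhood of $\partial E=\{\det=0\}$ and then show that the resulting error vanishes under the respective hypotheses on $G$. The paper uses the sharp cutoff $U_\varepsilon=\{\det x>\varepsilon\}$, so the error is an explicit surface integral over $\partial U_\varepsilon$; you use a smooth cutoff $\chi_\epsilon=\phi(\det/\epsilon)$, so the error $R_\epsilon$ is a bulk integral supported in the shell $\{\epsilon<\det x<2\epsilon\}$. Two remarks. First, for $\delta<1$ the paper's sharp-cutoff route is necessarily a two-stage argument: with $h=\det^{\delta-1}$ the boundary term carries a factor $\varepsilon^{\delta-1}\to\infty$ multiplied by $\sup_{\partial U_\varepsilon\cap K}|G\bullet\nu|\to0$, an indeterminate form, so the paper first truncates the weight ($h_n=\tau_n\circ w$), sends $\varepsilon\downarrow0$ using only tangency (as in the $\delta=1$ case), and only afterwards sends $n\to\infty$ using boundedness of $G\bullet x^{-\top}$. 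Your smooth cutoff sidesteps that subtlety: on $\{\epsilon<\det x<2\epsilon\}\cap K$ one has $w\le\epsilon^{\delta-1}$, $\|\nabla\chi_\epsilon\|\le C/\epsilon$ and, by hypothesis, $|\adj x\bullet G|\le C'\det x\le 2C'\epsilon$, whence $|R_\epsilon|\lesssim\epsilon^{\delta-1}\,|\{\epsilon<\det x<2\epsilon\}\cap K|$, and the right side tends to zero because $\int_K(\det x)^{\delta-1}dx<\infty$ (Theorem~\ref{T:Radon}). Second, the obstacle you flag at the end is not actually one: the paper's proof of Theorem~\ref{T:ibp} invokes neither Theorem~\ref{T:cap0} nor the tube segments (these are reserved for the Markov-uniqueness argument of Section~\ref{S:MU}), and you do not need the coarea formula either---the direct shell estimate just sketched avoids slicing and thus the degenerate level sets entirely. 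The only geometric input both proofs rely on is that $\sigma_\varepsilon(K)$ (equivalently, your $g(t)$) stays bounded as $\varepsilon\downarrow0$, which the paper also treats as routine.
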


\begin{proof}
See Appendix~\ref{A:ibp}.
\end{proof}

\begin{remark}
It is not hard to show that local boundedness of $G(x)\bullet x^{-\top}$ implies that $G(x)$ is tangent to $\partial E$ at $x$ for all $x\in \partial E$. Hence for $g\in\CNeu$, $G=\nabla g$ will always satisfy the assumptions of Theorem~\ref{T:ibp}.
\end{remark}

The fact that $\Lcal$ is symmetric is apparent from the equalities
\begin{equation} \label{eq:fLg}
\langle f, -\Lcal g\rangle = \frac{1}{2} \langle f, \nabla^*\nabla g\rangle = \frac{1}{2} \langle \nabla f, \nabla g \rangle,
\end{equation}
valid for any $f\in C^1_c(E)$ and any $g\in\CNeu$. If $\delta>1$ we may take any $g\in C^2_c(E)$. The BESM process is now defined as follows.

\begin{definition}[BESM semigroup]
A symmetric sub-Markovian strongly continuous contraction semigroup $(T_t:t\ge 0)$ on $L^2(E,m)$ is called a \emph{${\rm BESM}(\delta,d)$ semigroup} if its generator extends $(\Lcal, \CNeu)$.
\end{definition}

An $E$-valued Markov process $X$ is said to be \emph{$m$-symmetric} if its transition function $p_t(x,dy)$ is $m$-symmetric. In this case the operators $f\mapsto\int_E f(y)p_t(\cdot,dy)$, where $f$ is bounded and in $L^2(E,m)$, can be extended to all of $L^2(E,m)$, see \cite[page~30]{Fukushima:2011ys}. This extension is called the \emph{$L^2(E,m)$ semigroup} of~$X$.

\begin{definition}[BESM process]
An $E$-valued $m$-symmetric Markov process whose $L^2(E,m)$ semigroup is a ${\rm BESM}(\delta, d)$ semigroup is called a \emph{${\rm BESM}(\delta, d)$ process}.
\end{definition}

While uniqueness of the BESM semigroup and process is a delicate matter, existence is straightforward via the theory of Dirichlet forms. In view of \eqref{eq:fLg} it is natural to consider the symmetric bilinear form
\[
\Ecal(f,g) = \frac{1}{2} \langle \nabla f, \nabla g\rangle, \qquad f,g \in C^1_c(E).
\]
This form is closable on $L^2(E,m)$, as can be deduced from Theorem~\ref{T:ibp} as follows. Pick a sequence $(f_n)$ in $C^1_c(E)$ converging to zero in $L^2(E,m)$, such that $\lim_{n,k}\Ecal(f_n-f_k,f_n-f_k)=0$. We must show $\lim_n \Ecal(f_n,f_n)=0$. Since $(\nabla f_n)$ is a Cauchy sequence in $L^2(E,m;\M^d)$, it has a strong limit $F$. For any $G \in C^\infty_c(E;\M^d)$ vanishing on a neighborhood of $\partial E$, Theorem~\ref{T:ibp} yields
\[
\langle F,G\rangle = \lim_n\ \langle \nabla f_n,G\rangle = \lim_n \langle f_n, \nabla^*G\rangle = 0.
\]
It follows that $F=0$ $m$-a.e., establishing closability. We define
\[
(\Ecal, D(\Ecal)) = \text{closure of } (\Ecal,C^1_c(E)).
\]
An application of \cite[Theorem~3.1.2]{Fukushima:2011ys} then shows (after routine verification of the conditions of that theorem) that $(\Ecal, D(\Ecal))$ is a regular, strongly local Dirichlet form. Furthermore, \eqref{eq:fLg} implies that the generator associated with $\Ecal$ coincides with $\Lcal$ when acting on functions in $\CNeu$ (or in $C^2_c(E)$ when $\delta>1$.) With some abuse of notation, we therefore let
\[
(\Lcal, D(\Lcal)) = \text{generator of } (\Ecal, D(\Ecal)),
\]
noting that the domain $D(\Lcal)$ contains $\CNeu$, and even contains $C^2_c(E)$ if $\delta>1$. In particular, the semigroup $(T_t:t>0)$ on $L^2(E,m)$ associated with $\Ecal$ and $\Lcal$ is a ${\rm BESM}(\delta,d)$ semigroup.

A corresponding ${\rm BESM}(\delta,d)$ process is then obtained as the $m$-symmetric Hunt process $X$ on $E$ associated with the Dirichlet form $(\Ecal, D(\Ecal))$, see \cite[Theorems~7.2.1 and~7.2.2]{Fukushima:2011ys}. The strongly local property of $\Ecal$ implies that this process has continuous paths. However, it is not guaranteed \emph{a priori} that $X$ is conservative; we now prove that it is, thereby obtaining existence of the ${\rm BESM}(\delta,d)$ process. In the following, let $\P^x$ be the law of $X$ starting from $x\in E$.

\begin{proposition} \label{P:conservative}
The Dirichlet form $\Ecal$ is \emph{conservative}, i.e.~the semigroup $(T_t:t>0)$ satisfies $T_t1=1$ for all $t>0$. Consequently, $X$ can be chosen so that
\[
\P^x(X_t\in E {\rm\ for\ all\ } t\ge 0)=1 \quad \text{for all } x\in E.
\]
\end{proposition}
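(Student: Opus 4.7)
My plan is to prove $T_t1 = 1$ at the level of the Dirichlet form via a volume-growth conservativeness criterion (e.g., Sturm, \emph{Analysis on local Dirichlet spaces -- I}, 1994), and then to promote this to the pathwise statement via a Lyapunov function. The latter comes from a direct calculation on \eqref{eq:A}: for $f(x) = \|x\|^2$ one has $\nabla f = 2x$, $\Delta f = 2d^2$ and $x^{-\top}\bullet\nabla f = 2\tr(\Id) = 2d$, so $\Lcal f \equiv d(d + \delta - 1)$ is constant.

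For the semigroup statement, the scaling $x \mapsto Rx$ together with Theorem~\ref{T:Radon} gives the polynomial bound
\[
m(B(0, R)) = R^{d(d + \delta - 1)}\!\int_{\|y\| \le 1} |\det y|^{\delta - 1}\, dy \le C\, R^{d(d + \delta - 1)},
\]
and hence $m(B(x_0, R)) \le C_{x_0}(1 + R)^{d(d + \delta - 1)}$ for every $x_0 \in E$. Since the energy density of $\Ecal$ is the Euclidean $|\nabla f|^2$, testing the intrinsic metric $\rho$ against $f(z) = \|z - y\|$ (whose gradient has unit norm $m$-a.e.) yields $\rho(x, y) \ge \|x - y\|$, so $B_\rho(x_0, R) \subseteq B(x_0, R) \cap E$. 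The logarithm of the bound is $O(\log R)$, hence $\int_1^\infty R/\log m(B_\rho(x_0, R))\, dR = \infty$ and Sturm's criterion delivers $T_t 1 = 1$ for every $t > 0$.

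For the pathwise statement, the standard Dirichlet-form construction supplies a conservative version of $X$ under $\P^x$ for quasi-every $x \in E$; to upgrade this to every $x$, I would use the Lyapunov function directly. Choose cutoffs $\phi_n \in \CNeu$ with $\phi_n = 1$ on $\{\|x\| \le n\}$, set $f_n = \|x\|^2 \phi_n \in \CNeu$, and appeal to Theorem~\ref{T:ibp} to show that $f_n(X_t) - \int_0^t \Lcal f_n(X_s)\, ds$ is a $\P^x$-martingale. A direct computation gives that $\Lcal f_n$ is uniformly bounded in $n$ (the $\|x\|^2$ factor is offset by the $O(1/n^2)$ decay of $\nabla \phi_n$ and $\Delta \phi_n$ on the annulus $n \le \|x\| \le 2n$) and coincides with $d(d+\delta-1)$ on $\{\|x\| \le n\}$. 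Localizing at the exit time $\tau_n$ of $\{\|x\| < n\}$ then yields $\E^x[\|X_{t \wedge \tau_n}\|^2] \le \|x\|^2 + d(d + \delta - 1)\, t$, and Chebyshev's inequality gives $\P^x(\tau_n \le t) \le (\|x\|^2 + d(d+\delta-1)t)/n^2 \to 0$, ruling out explosion for every starting point $x \in E$.

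The main obstacle is that the non-convex, stratified structure of $E$ precludes straight-line interpolation arguments, so only one-sided metric comparisons and radially symmetric cutoffs are directly available. This is enough here: only the lower bound $\rho \ge \|\cdot\|$ is needed in the volume step (extracted from a single Lipschitz test function), and the cutoff $\phi_n$ can be chosen radially so that the supports of $\nabla \phi_n$ and $\Delta \phi_n$ avoid $\partial E$ in the way required for Theorem~\ref{T:ibp} to apply.
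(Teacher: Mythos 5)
Your argument splits into two parts, and they fare differently.

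\textbf{Semigroup conservativeness.} Your route via polynomial volume growth and Sturm's criterion is genuinely different from the paper's. The scaling computation $m(B(0,R))=R^{d(d+\delta-1)}\int_{\|y\|\le1}|\det y|^{\delta-1}dy$ and the comparison $\rho\ge\|\cdot\|$ via the test function $f(z)=\|z-y\|$ are both correct, and the exponent matches the drift computation $\Lcal(\|\cdot\|^2)=d(d+\delta-1)$. However, Sturm's theorem is not purely a volume-growth statement: it also requires that the intrinsic metric $\rho$ be a complete metric inducing the original topology, with $\rho$-balls relatively compact. You get the upper bound $\rho\ge\|\cdot\|$ for free, but the matching facts about $\rho$ (finiteness, continuity/completeness near the stratified boundary $\partial E$) are not automatic here precisely because $E$ is not a Lipschitz domain and does not lie on one side of its boundary; these hypotheses would need to be checked, and the paper's own proof avoids them entirely by constructing the explicit cutoff sequence $f_n(x)=\phi(\|x\|-n)$ and applying the elementary criterion of \cite[Theorem~1.6.6]{Fukushima:2011ys}. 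The two approaches are about equally long; the paper's is more self-contained on this particular state space.

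\textbf{Pathwise statement.} Here there is a genuine gap. You invoke Theorem~\ref{T:ibp} to claim that $f_n(X_t)-\int_0^t\Lcal f_n(X_s)\,ds$ is a $\P^x$-martingale for \emph{every} $x\in E$, but Theorem~\ref{T:ibp} is only an integration-by-parts identity and says nothing about the process. The relevant result is Lemma~\ref{L:mg} (Fukushima decomposition), which yields the martingale property only for \emph{quasi-every} $x$, with a possibly $f$-dependent exceptional set; this limitation is explicitly flagged in Remark~\ref{R:1}. Your Lyapunov estimate $\P^x(\tau_n\le t)\le(\|x\|^2+d(d+\delta-1)t)/n^2$ is therefore only available for q.e.~$x$, which does not upgrade to ``for all $x\in E$.'' What is actually needed is the modification step: once $T_t1=1$, one may redefine the Hunt process on a properly exceptional set so that it is conservative from every starting point. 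This is exactly what \cite[Exercise~4.5.1]{Fukushima:2011ys} provides and what the paper cites. Your Lyapunov computation is correct and could be used to give an independent proof of $T_t1=1$ (in place of Sturm), but it cannot replace the modification argument for the ``for all $x$'' conclusion.
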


\begin{proof}
By \cite[Theorem~1.6.6]{Fukushima:2011ys}, $\Ecal$ is conservative if there is a sequence $(f_n)\subset D(\Ecal)$ such that $0\le f_n\le 1$ and $\lim_n f_n =1$ $m$-a.e., and such that
\[
\lim_n \Ecal(f_n,g)=0 \text{ holds for any } g \in D(\Ecal)\cap L^1(E,m).
\]
To construct such a sequence, pick $\phi\in C^2(\R)$ satisfying $\phi(t)=1$ for $t\le 0$, $\phi(t)=0$ for $t\ge 1$, and with $\phi'$, $\phi''$ uniformly bounded. For $n\ge 1$ define $f_n(x) = \phi(\| x \| - n)$. Differentiating twice yields
\begin{align*}
\nabla f_n(x) &= \phi'(\|x\|-n) \frac{x}{\|x\|}, \\
\Delta f_n(x) &= \phi''(\|x\|-n) + \phi'(\|x\|-n)\frac{d^2-1}{\|x\|}.
\end{align*}
Since $\nabla f_n$ and $\Delta f_n$ both vanish outside the set $E_n=\{x\in E:n\le\|x\|<n+1\}$, we obtain $f_n\in \CNeu\subset D(\Lcal)$ as well as
\[
|\Ecal(f_n,g)| =  \left| \int_{E_n} \Lcal f_n(x) g(x) m(dx) \right| \le \sup_{x\in E_n} |\Lcal f_n(x)| \int_{E_n} |g(x)| m(dx),
\]
where H\"older's inequality was applied. For $n\ge 1$, the supremum is bounded by a constant $c>0$ that is independent of $n$. Hence
\[
\sum_{n\ge 1} |\Ecal(f_n,g)| \le c \sum_{n\ge 0} \int_{E_n} |g(x)| m(dx) = c \| g \|_{L^1(E,m)} < \infty.
\]
We deduce that $\lim_n |\Ecal(f_n,g)|=0$, showing that $\Ecal$ is conservative. The statement about $X$ now follows from \cite[Exercise~4.5.1]{Fukushima:2011ys}.
\end{proof}

\section{Some properties and the relation to Wishart processes} \label{S:WIS}

Throughout this section $X$ denotes a BESM$(\delta,d)$ process with $\delta>0$, and $\P^x$ denotes its law when started from $x\in E$. Our goal is to study some of its basic properties, in particular the relation to Wishart processes. Much of the analysis relies on the following standard result, which states that $X$ solves the martingale problem for $\Lcal$. It can be deduced, for example, from \cite[Corollary~5.4.1 and Theorem~5.1.3]{Fukushima:2011ys}. Here and throughout this section, a property holds for {\em quasi-every (q.e.) $x\in E$} if it holds outside a set of zero capacity; see~\cite[page 68]{Fukushima:2011ys}.

\begin{lemma}\label{L:mg}
Pick any $f\in D(\Lcal)$ such that $\Lcal f$ is locally $m$-integrable on $E$. For q.e.~$x\in E$ we have $\int_0^t |\Lcal f(X_s)|ds<\infty$ for all $t\ge 0$, $\P^x$-a.s., and the process
\begin{equation}\label{eq:mg}
f(X_t) - f(x) - \int_0^t \Lcal f(X_s)ds, \qquad t\ge 0,
\end{equation}
is a square integrable martingale under $\P^x$.
\end{lemma}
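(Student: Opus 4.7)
The plan is to invoke the Fukushima decomposition attached to the regular, strongly local Dirichlet form $(\Ecal,D(\Ecal))$ constructed in Section~\ref{S:DE}. Conservativeness of the associated Hunt process $X$, established in Proposition~\ref{P:conservative}, means that all of Chapter~5 of~\cite{Fukushima:2011ys} applies without truncation by explosion times; in particular for every $f\in D(\Ecal)$ one has the $\P^x$-a.s.\ decomposition, valid for q.e.~$x\in E$,
\[
f(X_t) - f(x) = M^{[f]}_t + N^{[f]}_t, \qquad t\ge 0,
\]
with $M^{[f]}$ a martingale additive functional (MAF) of finite energy and $N^{[f]}$ a continuous additive functional (CAF) of zero energy.

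The first step is to upgrade $M^{[f]}$ to a square integrable martingale under the stronger hypothesis $f\in D(\Lcal)$. This is automatic from the energy identity $\mathbf{e}(M^{[f]})=\tfrac12\Ecal(f,f)$ together with the Revuz measure of the sharp bracket $\langle M^{[f]}\rangle$: for $f\in D(\Lcal)$ the carré du champ $\Gamma(f,f)$ is $m$-integrable, so $\E^x[\langle M^{[f]}\rangle_t]<\infty$ for a.e.~$x$ and (by the usual quasi-continuous refinement) for q.e.~$x$.

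The second step is the identification $N^{[f]}_t = \int_0^t \Lcal f(X_s)\,ds$. The Revuz correspondence \cite[Corollary~5.4.1]{Fukushima:2011ys} associates to the CAF $N^{[f]}$ a signed smooth measure $\nu_{N^{[f]}}$, and the definition of $\Lcal$ as the $L^2$-generator of $\Ecal$ together with Theorem~5.1.3 of~\cite{Fukushima:2011ys} identifies $\nu_{N^{[f]}} = \Lcal f\cdot m$. Because $\Lcal f$ is assumed locally $m$-integrable on $E$, this measure is a locally finite signed Radon measure on $E$, and the Revuz formula then yields, for q.e.~$x$,
\[
N^{[f]}_t = \int_0^t \Lcal f(X_s)\,ds, \qquad t\ge 0,
\]
with the integral being absolutely convergent, i.e.\ $\int_0^t |\Lcal f(X_s)|\,ds<\infty$ $\P^x$-a.s. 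Combining the two steps gives \eqref{eq:mg} and its claimed integrability.

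The main obstacle is the second step, since $\Lcal f$ need not be globally in $L^1(E,m)$ or bounded, so one cannot directly cite the textbook version of Revuz correspondence. The standard workaround, which I would carry out, is to exhaust $E$ by a sequence of relatively compact sets $E_n\nearrow E$ with $\1{E_n}\Lcal f\in L^1(E,m)$, apply Theorem~5.1.3 on each $E_n$ up to the hitting time of $E\setminus E_n$, and then send $n\to\infty$ using the quasi-continuity of sample paths together with conservativeness (Proposition~\ref{P:conservative}) to patch the identifications into a global one. The square integrability claim for the martingale part then passes to the limit by the monotone convergence of the predictable quadratic variations.
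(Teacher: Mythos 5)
Your proposal reconstructs exactly the route the paper points to: decompose $f(X_t)-f(x)=M^{[f]}_t+N^{[f]}_t$ via Fukushima's theorem, identify $N^{[f]}_t=\int_0^t \Lcal f(X_s)\,ds$ through the Revuz correspondence and the $L^2$-generator (Corollary~5.4.1 and Theorem~5.1.3 of Fukushima--Oshima--Takeda), and note square integrability of the martingale part from $f\in D(\Lcal)\subset D(\Ecal)$. The paper's "proof" is precisely a citation of those same two results, so you have simply spelled out what the authors leave implicit; one small remark is that the exhaustion step is likely unnecessary here since $\Lcal f\in L^2(E,m)$ together with $m$ being Radon (Theorem~\ref{T:Radon}) already gives local $m$-integrability.
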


\begin{remark} \label{R:1}
The exceptional set for which the conclusion of the lemma fails depends on the function $f$ in general. We conjecture that $X$ is in fact strongly Feller. In this case the quantifier ``for q.e.~$x\in E$'' can be replaced with ``for every $x\in E$'' in the above lemma, as well as in all subsequent results.
\end{remark}

\begin{proposition}
Suppose $\delta>1$. For q.e.~$x\in E$ we have
\begin{enumerate}
\item $\int_0^t \|X_s^{-\top}\|ds<\infty$ for all $t\ge 0$, $\P^x$-a.s.,
\item the process $W$ defined via
\begin{equation} \label{eq:SDE}
X_t = X_0 + W_t + \frac{\delta-1}{2} \int_0^t X_s^{-\top}ds, \qquad t\ge 0,
\end{equation}
is $\M^d$-valued Brownian motion under~$\P^x$.
\end{enumerate}
\end{proposition}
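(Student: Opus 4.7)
The plan is to apply the martingale-problem result (Lemma~\ref{L:mg}) to truncated coordinate functions, and then invoke L\'evy's characterization of Brownian motion. Since $\delta>1$, the domain $D(\Lcal)$ contains $C_c^2(E)$. I will choose a cutoff $\chi_n \in C_c^2(\M^d)$ with $\chi_n\equiv 1$ on $\{\|x\|\le n\}$ and $\supp \chi_n\subset \{\|x\|\le n+1\}$, and set $f_n^{ij}(x) = \chi_n(x)x_{ij}$ and $g_n^{ijkl}(x) = \chi_n(x)x_{ij}x_{kl}$; both lie in $C_c^2(E) \subset D(\Lcal)$. On $\{\chi_n=1\}$ a direct computation gives
\[
\Lcal f_n^{ij}(x) = \tfrac{\delta-1}{2}(x^{-\top})_{ij}, \qquad \Lcal g_n^{ijkl}(x) = \delta_{ik}\delta_{jl} + \tfrac{\delta-1}{2}\bigl[x_{kl}(x^{-\top})_{ij} + x_{ij}(x^{-\top})_{kl}\bigr].
\]

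The main obstacle is verifying that these $\Lcal$-images are locally $m$-integrable. Since $\|x^{-\top}\| \le C|\det x|^{-1}$ on the compact set $\supp\chi_n$ and derivatives of $\chi_n$ are bounded, the products $|\Lcal f_n^{ij}(x)|\,|\det x|^{\delta-1}$ and $|\Lcal g_n^{ijkl}(x)|\,|\det x|^{\delta-1}$ are dominated there by a constant times $|\det x|^{\delta-2}$, which is Lebesgue integrable on bounded sets precisely because $\delta-2>-1$, by Theorem~\ref{T:Radon}. This is the step where the hypothesis $\delta>1$ is used essentially. Lemma~\ref{L:mg} then yields, for q.e.\ $x\in E$ (taking a countable union of exceptional sets over $n,i,j,k,l$), square-integrable $\P^x$-martingales $M_n^{ij}(t) = f_n^{ij}(X_t) - f_n^{ij}(X_0) - \int_0^t \Lcal f_n^{ij}(X_s)ds$ and analogous martingales $N_n^{ijkl}$.

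Set $\tau_n = \inf\{t\ge 0:\|X_t\|\ge n\}$; by conservativity (Proposition~\ref{P:conservative}), $\tau_n\uparrow\infty$ $\P^x$-a.s. On $[0,\tau_n]$ we have $f_n^{ij}(X_t) = (X_t)_{ij}$ and $\Lcal f_n^{ij}(X_s) = \tfrac{\delta-1}{2}(X_s^{-\top})_{ij}$; the a.s.\ finiteness of $\int_0^t|\Lcal f_n^{ij}(X_s)|ds$ from Lemma~\ref{L:mg}, summed over $(i,j)$ and combined with $\tau_n\uparrow\infty$, yields (i). With $W$ defined coordinate-wise by \eqref{eq:SDE}, the identity $W^{ij}_{t\wedge\tau_n} = M_n^{ij}(t\wedge\tau_n)$ makes $W$ a continuous local martingale. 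To compute its covariation, I will apply It\^o's product formula to $(X_t)_{ij}(X_t)_{kl}$ using \eqref{eq:SDE}, whose finite-variation part is $\tfrac{\delta-1}{2}\int_0^t[(X_s)_{ij}(X_s^{-\top})_{kl}+(X_s)_{kl}(X_s^{-\top})_{ij}]ds + \langle W^{ij},W^{kl}\rangle_t$. Matching this against the finite-variation part $\int_0^t \Lcal g_n^{ijkl}(X_s)ds$ extracted from $N_n^{ijkl}$ on $[0,\tau_n]$, the $x^{-\top}$ cross-terms cancel and leave $\langle W^{ij},W^{kl}\rangle_t = \delta_{ik}\delta_{jl}\,t$. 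L\'evy's characterization then identifies $W$ as an $\M^d$-valued Brownian motion, completing (ii).
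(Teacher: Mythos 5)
Your proof is correct and takes essentially the same route as the paper: both appeal to Lemma~\ref{L:mg} applied to a countable family of compactly supported $C^2$ functions (possible since $\delta>1$ gives $C^2_c(E)\subset D(\Lcal)$ and local $m$-integrability of $1/\det$), and then reduce to a standard martingale-problem / L\'evy-characterization argument. The only difference is that you spell out explicitly the choice of cutoffs, the computation of $\Lcal f_n^{ij}$ and $\Lcal g_n^{ijkl}$, and the covariation matching, whereas the paper delegates these ``standard arguments'' to a reference.
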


\begin{proof}
We saw in Section~\ref{S:DE} that $C^2_c(E)\subset D(\Lcal)$ if $\delta>1$, and Theorem~\ref{T:Radon} implies that $1/\det(x)$ is locally $m$-integrable in this case. Let $\Ccal$ be a countable subset of $C^2_c(E)$. Lemma~\ref{L:mg} then implies that there is an exceptional set $N\subset E$ such that for all $x\in E\setminus N$ we have (i), and \eqref{eq:mg} defines a $\P^x$~martingale for all $f\in\Ccal$. Choosing $\Ccal$ suitably, standard arguments (see for instance \cite[Theorem~V.20.1]{Rogers:2000ve}) show that $X$ solves the stochastic differential equation associated with $\Lcal$---that is, (ii)~holds.
\end{proof}

\begin{corollary} \label{C:semimg1}
For $\delta>1$, $X$ is a semimartingale under $\P^x$ for q.e.~$x\in E$.
\end{corollary}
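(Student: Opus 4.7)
The plan is to read off the corollary directly from the preceding proposition. For $\delta>1$, that proposition gives, for q.e.\ $x\in E$, the decomposition
\[
X_t = X_0 + W_t + \frac{\delta-1}{2}\int_0^t X_s^{-\top}\,ds, \qquad t\ge 0,
\]
under $\P^x$, where $W$ is an $\M^d$-valued Brownian motion. Brownian motion is a (continuous, square-integrable) local martingale, hence a semimartingale.

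It remains to argue that the drift term is of finite variation on compacts, and this is exactly the content of part (i) of the preceding proposition: for q.e.\ $x$, one has $\int_0^t\|X_s^{-\top}\|\,ds<\infty$ for every $t\ge 0$, $\P^x$-a.s. Consequently the process $t\mapsto \frac{\delta-1}{2}\int_0^t X_s^{-\top}\,ds$ is absolutely continuous, componentwise of locally bounded variation, and adapted. The sum of a continuous local martingale and a continuous adapted finite-variation process is a continuous semimartingale, so $X$ is a semimartingale under $\P^x$ for q.e.\ $x\in E$.

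There is no substantive obstacle here; the corollary is a one-line consequence of the decomposition \eqref{eq:SDE} and the integrability statement (i), both of which are already established. The only thing to note is that the exceptional set is the same one appearing in the preceding proposition, so the ``q.e.'' qualifier is inherited verbatim.
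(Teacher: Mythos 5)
Your proof is correct and is exactly the argument the paper intends; the corollary is stated without proof because it follows immediately from the preceding proposition's parts (i) and (ii) in precisely the way you describe. The decomposition \eqref{eq:SDE} together with the local integrability of $\|X^{-\top}\|$ exhibits $X$ as a continuous local martingale plus an adapted process of finite variation, which is the definition of a semimartingale.
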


We now describe the properties of the transformed processes $X^\top X$, $\|X\|$, $\det X$. The main observation is the following. Define the map
\[
\Phi:\M^d\to \S^d_+, \qquad x\mapsto x^\top x,
\]
and consider the operator
\[
\Lcal^{\rm WIS}g(z) = \tr(2z\nabla^2 + \alpha\nabla)g(z), \qquad z\in\S^d_+,\quad g\in C^\infty_c(\S^d_+).
\]
This is the generator of the ${\rm WIS}(\alpha,d)$ process, see~\cite{Bru:1991}. The Wishart process exists and is nondegenerate (in the sense of not being absorbed when it hits the boundary~$\partial\S^d_+$) precisely when
\begin{equation}\label{eq:alphaWIS}
\alpha>d-1.
\end{equation}
Now, for any $g\in C^\infty_c(\S^d_+)$ and any $x\in E$, one readily verifies the identities
\begin{equation} \label{eq:nablagPhi}
\nabla (g\circ\Phi)(x) = 2x\nabla g(x^\top x), \qquad \frac{1}{2}\Delta (g\circ\Phi)(x) = \tr(2 x^\top x \nabla^2 + d\nabla) g(x^\top x).
\end{equation}
Consequently we have
\begin{equation}\label{eq:WPhi}
g\circ\Phi \in D(\Lcal) \qquad \text{and} \qquad \Lcal (g\circ\Phi) = ( \Lcal^{\rm WIS}g ) \circ \Phi,
\end{equation}
where the latter function lies in $C^\infty_c(E)$ and in particular is locally $m$-integrable. An application of Lemma~\ref{L:mg} then shows that $\Phi(X)=X^\top X$ solves the martingale problem for $(\Lcal^{\rm WIS}, C^\infty_c(\S^d_+))$, with $\alpha=d-1+\delta$, and hence is a Wishart process. This proves part~(i) of the following theorem.

\begin{theorem}  \label{T:WIS}
The following statements hold.
\begin{enumerate}
\item For q.e.~$x\in E$, the law of $X^\top X$ under $\P^x$ is that of a {\rm WIS}$(\alpha,d)$ process starting from $x^\top x$, where $\alpha=d-1+\delta$.
\item Let $dz$ denote Lebesgue measure on $\S^d_+$. For $\alpha\in\R$, define a measure $\widehat m$ on $\S^d_+$ by
\begin{equation} \label{eq:T:WIS:1}
\widehat m(dz) = (\det z)^{(\alpha-d-1)/2}dz.
\end{equation}
Then $\widehat m$ is a Radon measure if and only if $\alpha>d-1$.
\item Let $\alpha=d-1+\delta$ and define $\widehat m$ by~\eqref{eq:T:WIS:1}. Define a symmetric bilinear form on $C^\infty_c(\S^d_+)$ by
\[
\Ecal^{\rm WIS}(g,h) = \frac{1}{2} \int_{\S^d_+} 4\, \tr\left(\nabla g(z)\, z\, \nabla h(z) \right) \widehat m(dz).
\]
This form is closable in $L^2(\S^d_+,\widehat m)$. Its closure is a regular, strongly local, conservative Dirichlet form, whose generator coincides with $\Lcal^{\rm WIS}$ on $C^\infty_c(\S^d_+)$. In particular, $(\Lcal^{\rm WIS},\,C^\infty_c(\S^d_+))$ is a symmetric operator on $L^2(\S^d_+,\widehat m)$.
\end{enumerate}
\end{theorem}

\begin{proof}
Part~(i) was proved above. For part~(ii), it follows from \cite[Theorem~2.1.14]{Muirhead:1982fk} that $\widehat m = c\, \Phi_*m$ for some constant $c>0$, where $\Phi_*m$ is the pushforward of~$m$ under~$\Phi$. Moreover, due to the bounds $\|x^\top x\| \le \|x\|^2\le \sqrt{d}\|x^\top x\|$, we have that $K\subset E$ is bounded if and only if $\Phi(K)\subset\S^d_+$ is bounded. The result now follows from Theorem~\ref{T:Radon}. It remains to prove part~(iii), and we start by expressing $\Ecal^{\rm WIS}$ in terms of $\Ecal$. For $g,h\in C^\infty_c(\S^d_+)$ we have
\begin{align*}
\Ecal(g\circ\Phi, h\circ\Phi)
&= \frac{1}{2} \int_E \nabla(g\circ\Phi)(x) \bullet \nabla(h\circ\Phi)(x)\, m(dx) \\
&= \frac{1}{2} \int_E 4\,\tr\left(\nabla g(x^\top x)\, x^\top x\, \nabla h(x^\top x)\right) m(dx) \\
&= \frac{1}{2} \int_{\S^d_+} 4\, \tr\left(\nabla g(z)\, z\, \nabla h(z) \right)\Phi_*m(dz) \\
&= c^{-1} \Ecal^{\rm WIS}(g,h),
\end{align*}
where we used the definition of $\Ecal$, the first identity in~\eqref{eq:nablagPhi}, the change of variable theorem, and finally the expression for $\widehat m$. This together with the equality
\[
\| g \|^2_{L^2(\S^d_+,\widehat m)} = c\| g\circ\Phi \|^2_{L^2(E,m)}
\]
lets us deduce closability, regularity and strong locality from the corresponding properties of~$\Ecal$. Conservativeness is proved as in Proposition~\ref{P:conservative} by observing that the functions $f_n$ appearing there are of the form $f_n=g_n\circ\Phi$.

To complete the proof we must relate $\Ecal^{\rm WIS}$ to the operator $\Lcal^{\rm WIS}$. Using~\eqref{eq:fLg} and~\eqref{eq:WPhi} we get, for $g,h\in C^\infty_c(\S^d_+)$,
\begin{align*}
\Ecal(g\circ\Phi, h\circ\Phi)
&= - \int_E g\circ\Phi(x)\, \Lcal(h\circ\Phi)(x)\, m(dx) \\
&= - \int_E g\circ\Phi(x)\,(\Lcal^{\rm WIS}h)\circ\Phi(x)\, m(dx) \\
&= - \int_{\S^d_+} g(z)\,\Lcal^{\rm WIS}h(z)\, \Phi_*m(dz).
\end{align*}
Combining this with the previous expression, we arrive at
\[
\Ecal^{\rm WIS}(g,h) =  - \int_{\S^d_+} g(z)\,\Lcal^{\rm WIS}h(z)\, \widehat m(dz),
\]
which implies that $\Lcal^{\rm WIS}$ coincides with the generator of $\Ecal^{\rm WIS}$ on $C^\infty_c(\S^d_+)$.
\end{proof}

\begin{remark}
It is interesting to note that the restriction $\delta>0$ corresponds exactly to the (well-known) condition~\eqref{eq:alphaWIS} for the existence of non-degenerate Wishart process, see \cite[Theorem~2]{Bru:1991}. Theorem~\ref{T:WIS} connects this to the Radon property of the symmetrizing measure~$\widehat m$.
\end{remark}

\begin{remark} \label{R:WISTD}
The transition density $q(t,u,z)$ of the ${\rm WIS}(\alpha,d)$ process is given in~\cite{Donati-Martin:2004vn}. In terms of the measure~$\widehat m$ it becomes
\[
q(t,u,z)dz = \frac{1}{(2t)^{\alpha d/2}\Gamma_d(\alpha/2)}\exp\left(-\frac{1}{2t}\tr(u+z)\right)\,_0F_1\left(\frac{\alpha}{2}; \frac{zu}{4t^2}\right) \widehat m(dz),
\]
where $\Gamma_d$ is the multivariate Gamma function, and $_0F_1$ is a hypergeometric function with matrix argument; see~\cite{Donati-Martin:2004vn} for the precise definitions. Note that the density with respect to $\widehat m$ is symmetric, as it should be.
\end{remark}

\begin{corollary} \label{C:Xprops}
Let $X$ be a ${\rm BESM}(\delta,d)$ process as above. The following statements hold for q.e.~$x\in E$.
\begin{enumerate}
\item $\|X\|$ is a ${\rm BES}(d\alpha)$ process under $\P^x$, where $\alpha=d-1+\delta$.
\item $\det X$ is a time-changed ${\rm BES}(\delta)$ process under $\P^x$. More specifically, define
\[
A_t=\int_0^t\|\adj X_s\|^2ds, \qquad \xi_u=\det X_{C(u)},
\]
where $C(u)=\inf\{t\ge 0: A_t>u\}$ is the right-continuous inverse of $A$. Then $A$ is strictly increasing, and $\xi$ is a ${\rm BES}(\delta)$ process stopped at $A_\infty$.
\end{enumerate}
\end{corollary}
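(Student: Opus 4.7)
Both parts follow from Lemma~\ref{L:mg} applied to appropriate test functions, combined, for (ii), with a Dambis-Dubins-Schwarz time change. Since $\|x\|^2$ and $\det x$ are not compactly supported, one first works with localized versions $f_n=\phi_n\|x\|^2$ and $g_n=\phi_n\det x$, where $\phi_n(x)=\psi_n(\|x\|)$ and $\psi_n\in C^2_c([0,\infty))$ is a radial cutoff equal to~$1$ on $[0,n]$. The radial form yields $x^{-\top}\bullet\nabla\phi_n=d\,\psi_n'(\|x\|)/\|x\|$, which is bounded, so $f_n\in\CNeu\subset D(\Lcal)$; similarly for $g_n$ after an additional cutoff away from $\{\det=0\}$ (discussed below).

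For (i), direct computation gives $\nabla\|x\|^2=2x$, $\Delta\|x\|^2=2d^2$, and $x^{-\top}\bullet\nabla\|x\|^2=2\tr(x^{-1}x)=2d$, whence $\Lcal\|x\|^2=d(d-1+\delta)=d\alpha$. Since $\nabla\|x\|^2=2x$, the quadratic variation of $\|X\|^2$ equals $4\|X\|^2\,dt$. Lemma~\ref{L:mg} and L\'evy's characterization then show that $\|X\|^2$ satisfies the ${\rm BESQ}(d\alpha)$ equation, so $\|X\|$ is ${\rm BES}(d\alpha)$. Equivalently, this is immediate from Theorem~\ref{T:WIS}(i), the trace of ${\rm WIS}(\alpha,d)$ being the classical ${\rm BESQ}(d\alpha)$ process.

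For (ii), the pivotal identities are $\Delta\det=0$ (each entry $C_{ij}$ of $\nabla\det$ is the $(i,j)$-cofactor and so independent of $x_{ij}$) and, via $x^{-1}=\adj(x)/\det x$,
\[
x^{-\top}\bullet\nabla\det=\sum_{i,j}(x^{-1})_{ji}\,C_{ij}=\frac{1}{\det x}\sum_{i,j}C_{ij}^2=\frac{\|\adj x\|^2}{\det x}.
\]
Thus $\Lcal\det=\frac{\delta-1}{2}\,\|\adj x\|^2/\det x$ and the martingale part of $\det X$ has quadratic variation $\|\adj X\|^2\,dt=dA_t$. Lemma~\ref{L:mg} applied to $g_n$, followed by $n\to\infty$ and the localization described below, produces the semimartingale decomposition
\[
\det X_t=\det X_0+M_t+\frac{\delta-1}{2}\int_0^t\frac{\|\adj X_s\|^2}{\det X_s}\,ds,\qquad [M]_t=A_t.
\]
Strict monotonicity of $A$ follows because $\adj X_s=0$ precisely on the co-rank $\ge 2$ stratum, which has zero capacity by Theorem~\ref{T:cap0} and therefore occupies zero Lebesgue time $\P^x$-a.s. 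Finally, Dambis-Dubins-Schwarz yields $M_t=\widetilde B_{A_t}$ for some Brownian motion $\widetilde B$, and the substitution $ds=du/\|\adj X_{C(u)}\|^2$ rewrites the decomposition as
\[
\xi_u=\xi_0+\widetilde B_u+\frac{\delta-1}{2}\int_0^u\frac{du'}{\xi_{u'}},\qquad 0\le u<A_\infty,
\]
which is the ${\rm BES}(\delta)$ equation for $\xi$, stopped at $A_\infty$.

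The main obstacle I foresee is the localization in (ii): since $\|\adj x\|^2/\det x$ is unbounded near $M_{d-1}$, $g_n\notin\CNeu$ in general, and for $\delta\le 1$ one moreover does not have $C^2_c(E)\subset D(\Lcal)$. The resolution is a further stopping at $\sigma_\varepsilon=\inf\{t:\det X_t<\varepsilon\}$: on $\{\det x\ge\varepsilon\}$ the drift is bounded, so $g_n\in\CNeu$ and Lemma~\ref{L:mg} applies; the limit $\varepsilon\downarrow 0$ is then justified \emph{a posteriori} from the ${\rm BES}(\delta)$ identification of $\xi$, which guarantees $\{s:\det X_s=0\}$ has zero Lebesgue measure $\P^x$-a.s., rendering the drift integral globally well-defined.
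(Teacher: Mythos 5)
Part~(i) is correct and essentially what the paper does (it just cites the Wishart-trace fact; your explicit computation agrees).

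Part~(ii) takes a genuinely different route from the paper's, and the route has a real gap for $\delta\le 1$. The paper never writes an SDE for $\det X$: it uses Bru's SDE for $\det Z$ with $Z=X^\top X$ (a ${\rm BESQ}$-type equation, globally valid for every $\delta>0$), time-changes to identify $\det Z_{C(\cdot)}$ as ${\rm BESQ}(\delta)$ stopped at $A_\infty$, and only then takes square roots, obtaining ${\rm BES}(\delta)$ by definition. Your proposal instead aims at a semimartingale decomposition of $\det X$ itself and wants to read off the ${\rm BES}(\delta)$ SDE for $\xi$. But that SDE does not characterize ${\rm BES}(\delta)$ for $\delta\le 1$: for $0<\delta<1$ the process is not even a semimartingale past its first zero, and for $\delta=1$ it is $|B|$, whose Tanaka decomposition carries a local time term at $0$ that your identity $\xi_u=\xi_0+\widetilde B_u$ (the $\delta=1$ case of your SDE, since $\delta-1=0$) does not contain. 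Indeed $\xi_u=\xi_0+\widetilde B_u$ is incompatible with $\xi\ge 0$, so something is missing.

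The missing piece is a boundary local time: $\det\notin\CNeu$ because $x^{-\top}\bullet\nabla\det=\|\adj x\|^2/\det x$ is unbounded, and the normal derivative of $\det$ on $M_{d-1}$ is $\pm\|\adj x\|\ne 0$, so $\det$ violates the Neumann condition encoded in $\CNeu$. Stopping at $\sigma_\varepsilon=\inf\{t:\det X_t<\varepsilon\}$ makes Lemma~\ref{L:mg} applicable on $\{\det\ge\varepsilon\}$ but says nothing about the additive functional accumulated at $\partial E$ as $\varepsilon\downarrow 0$. Your proposed a posteriori justification is circular: you invoke the ${\rm BES}(\delta)$ identification of $\xi$ to pass to the limit $\varepsilon\downarrow 0$, but that identification is the very thing the limit is supposed to produce. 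The clean fix is the paper's squaring trick; alternatively one could establish part~(ii) for $\delta>1$ exactly as you propose and handle $\delta\le 1$ separately via $(\det X)^2$.
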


\begin{proof}
Part~(i) is immediate from the well-known fact that the trace of a ${\rm WIS}(\alpha,d)$ process is a ${\rm BESQ}(d\alpha)$ process, see e.g.~\cite{Bru:1991}. We now prove part~(ii). Define $Z=X^\top X$, which is a ${\rm WIS}(\alpha,d)$ process with $\alpha=d-1+\delta$ by Theorem~\ref{T:WIS}. With $q(t,u,z)$ as in Remark~\ref{R:WISTD}, we have
\begin{align*}
\int_0^\infty \P^x(\rk X_t \le d-2) dt &= \int_0^\infty \P^x(\rk Z_t \le d-2) dt \\
&= \int_0^\infty \int_{\{u\in\S^d\,:\,\rk u\le d-2\}} q(t,x^\top x,z) dz\, dt =0.
\end{align*}
Thus $\{t:\adj X_t=0\}$ is a nullset, whence $A$ is strictly increasing. Next, $\det Z$ satisfies
\[
\det Z_t = \det Z_0 + 2\int_0^t \sqrt{\det Z_s} \sqrt{ \tr( \adj Z_s)} d\beta_s + \delta \int_0^t \tr( \adj Z_s) ds
\]
for some standard Brownian motion $\beta$; see~\cite[Section~4]{Bru:1991}. Hence after a time change (see \cite[Proposition~V.1.4]{Revuz:1999zr}) and using that $A_{C(u)}=u\wedge A_\infty$, we obtain
\[
\det Z_{C(u)} = \det Z_0 + 2\int_0^{u\wedge A_\infty} \sqrt{\det Z_{C(v)}} d\widetilde\beta_v  + \delta (u\wedge A_\infty),
\]
where we defined
\[
\widetilde \beta_u = \int_0^{u\wedge A_\infty} \sqrt{ \tr( \adj Z_{C(v)})} d\beta_{C(v)} = \int_0^{C(u)} \sqrt{ \tr( \adj Z_s)} d\beta_s,
\]
which is Brownian motion stopped at $A_\infty$. It follows that $\det Z_{C(\cdot)}$ satisfies the stochastic differential equation for the BESQ$(\delta)$ process, stopped at $A_\infty$. Since $\det X_t = \sqrt{\det Z_t}$ the result follows.
\end{proof}

\begin{corollary} \label{C:semimg2}
For $0<\delta<1$, $X$ fails to be a semimartingale under $\P^x$ for q.e.~$x\in E$.
\end{corollary}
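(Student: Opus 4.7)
The plan is to argue by contradiction, reducing the claim to the classical fact that the ${\rm BES}(\delta)$ process with $0<\delta<1$ is not a semimartingale in its natural filtration (see~\cite[Chapter~XI]{Revuz:1999zr}, with detailed treatments of this regime in~\cite{Biane/Yor:1987,Bertoin:1990}).

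Suppose, for some $x\in E$ outside the exceptional set of Corollary~\ref{C:Xprops}(ii), that $X$ is a semimartingale under $\P^x$. Since $\det:\M^d\to\R$ is a polynomial in the matrix entries, It\^o's formula immediately gives that $\det X$ is a continuous real-valued semimartingale under $\P^x$. Next, because $A_t=\int_0^t\|\adj X_s\|^2 ds$ is continuous and $(\Fcal_t)$-adapted, each $C(u)=\inf\{t\ge 0:A_t>u\}$ is an $(\Fcal_t)$-stopping time; the standard time-change theorem for semimartingales (e.g.~\cite[Proposition~V.1.5]{Revuz:1999zr}) then shows that $\xi_u=\det X_{C(u)}$ is a continuous semimartingale on the stochastic interval $[0,A_\infty)$ with respect to the time-changed filtration $(\Fcal_{C(u)})$. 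Since $m(\partial E)=0$ and $X$ has continuous paths, $\adj X_s\ne 0$ for Lebesgue-a.e.~$s>0$, so $A$ is strictly increasing and, in particular, $A_\infty>0$ $\P^x$-a.s.

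By Corollary~\ref{C:Xprops}(ii) the process $\xi$ on $[0,A_\infty)$ is literally a ${\rm BES}(\delta)$ trajectory. Applying Stricker's theorem transfers its semimartingale property from $(\Fcal_{C(u)})$ to its own natural filtration, contradicting the classical non-semimartingale property recalled above. The assertion for q.e.~$x\in E$ is inherited directly from the q.e.~applicability of Corollary~\ref{C:Xprops}(ii), which is the only place where an exceptional set enters the argument.

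The main obstacle I anticipate is the careful handling of the random time change: one has to verify that the semimartingale property genuinely survives the passage $t\mapsto C(u)$ over the a priori possibly finite horizon $A_\infty$, and that the positivity of $A_\infty$---guaranteed by the fact that $X$ does not linger in $\partial E$---really does leave a non-trivial ${\rm BES}(\delta)$ segment on which to pin the classical obstruction. The It\^o-formula step and the invocation of Stricker's theorem are then routine.
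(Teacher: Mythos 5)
Your overall strategy matches the paper's—pass from $X$ to $\det X$ to the time-changed $\xi=\det X_{C(\cdot)}$, then invoke the non-semimartingale property of BES$(\delta)$—but there is a genuine gap at the final step. The BES$(\delta)$ process with $0<\delta<1$ \emph{is} a semimartingale on the stochastic interval $[0,\tau)$, where $\tau$ is the first hitting time of zero: on that interval it still satisfies its SDE. What fails is the semimartingale property on any interval that extends past~$\tau$. Your argument only establishes that $A_\infty>0$, i.e.~that $\xi$ runs for some positive amount of time; but if $\xi$ never reached zero on $[0,A_\infty)$, the process you have isolated would simply be a BES$(\delta)$ stopped before its first zero, which is a perfectly good semimartingale, and no contradiction results. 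The paper resolves exactly this point by observing that it suffices to show $\det X_t=0$ for some finite $t$, that this is a well-known property of the Wishart process $X^\top X$, and that the strict increase of $A$ then forces $u=A_t<A_\infty$, so $\xi$ genuinely hits zero strictly before being stopped. You flagged this as the main obstacle but did not supply the missing ingredient; as written, the proof does not close.

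A secondary and minor point: the appeal to Stricker's theorem to move from $(\Fcal_{C(u)})$ to the natural filtration of $\xi$ is not needed, since the classical obstruction for BES$(\delta)$ with $0<\delta<1$ is filtration-free—the process fails to be a semimartingale in \emph{any} filtration on an interval containing its first zero, because the putative finite-variation part would have infinite variation. The paper's proof omits this step for that reason.
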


\begin{proof}
If $X$ were a semimartingale, then so would $\det X$, as well as the process $\xi$ in Corollary~\ref{C:Xprops}. However, the ${\rm BES}(\delta)$ process, $0<\delta<1$, fails to be a semimartingale on any interval larger than $[0,\tau)$, where $\tau$ is the first time it hits zero. It thus suffices to show that $\det X_t=0$ for some finite~$t$. Indeed, setting $u=A_t$ then yields $\xi_u=\det X_t=0$ and $u<A_\infty$, due to the strict increase of $A$. Thus $\xi$ hits zero before it is stopped, and fails to be a semimartingale. This contradiction shows that $X$ could not have been a semimartingale. The fact that $\det X_t=0$ for some finite $t$ follows from the corresponding well-known fact for the Wishart process.
\end{proof}

\begin{remark}
In view of Corollaries~\ref{C:semimg1} and~\ref{C:semimg2} one wonders whether $X$ is a semimartingale for $\delta=1$. Just as in the scalar case it turns out that it is---in fact, $X$ is reflected Brownian motion. We do not discuss this further here.
\end{remark}

We close this section with a pathwise construction of the BESM process as the strong solution to the stochastic differential equation~\eqref{eq:SDE}. This construction only works for~$\delta\ge 2$ and if the process starts from the interior of~$E$. Whether strong solutions exist for all $\delta>1$ is an open question also in the case of Wishart processes.

\begin{proposition} \label{P:str}
Suppose $\delta\ge 2$, and let $W$ be standard $\M^d$-valued Brownian motion defined on some probability space. The stochastic differential equation
\begin{equation} \label{eq:SDEstrong}
dX_t = dW_t + \frac{\delta-1}{2}X^{-\top}_s ds, \qquad X_0=x,
\end{equation}
has a unique $E^o$-valued strong solution for every $x\in E^o$.
\end{proposition}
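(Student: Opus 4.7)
The plan is to combine a standard localization with a non-explosion argument based on the Wishart connection of Theorem~\ref{T:WIS}. Set $\alpha=d-1+\delta\ge d+1$. On the open set $E^o=\{x\in\M^d:\det x>0\}$ the drift $b(x)=\frac{\delta-1}{2}x^{-\top}$ is $C^\infty$ and hence locally Lipschitz. For $n\ge 1$, define $K_n=\{x\in E^o:\det x\ge 1/n,\ \|x\|\le n\}$ and let $b_n$ be a globally Lipschitz extension of $b|_{K_n}$. Classical SDE theory then yields a unique strong solution $X^{(n)}$ of the truncated SDE, and pathwise uniqueness forces the $X^{(n)}$ to agree up to $\tau_n=\inf\{t:X^{(n)}_t\notin K_n\}$; they glue into a maximal $E^o$-valued solution $X$ of~\eqref{eq:SDEstrong} on $[0,\tau)$, with $\tau=\lim_n\tau_n$. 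Pathwise uniqueness on $[0,\tau)$ follows from the local Lipschitz property.

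To show $\tau=\infty$, I would control the two failure modes separately. For norm explosion, Itô's formula applied to $\|x\|^2$ gives $\Lcal\|x\|^2=d^2+(\delta-1)\tr(x^{-1}x)=d\alpha$, so $\|X_{t\wedge\tau_n}\|^2$ satisfies the BESQ$(d\alpha)$ equation up to $\tau_n$, and in particular $E[\|X_{t\wedge\tau_n}\|^2]\le\|x\|^2+d\alpha t$; by a Doob/Markov estimate, the probability that $\|X\|$ reaches $n$ by time $t$ is $O(1/n^2)$. For determinant collapse, I would rerun the argument of Corollary~\ref{C:Xprops}(ii) on $[0,\tau)$: the identities in~\eqref{eq:nablagPhi} together with Itô's formula show that $Z=X^\top X$ solves the Wishart martingale problem for $\Lcal^{\rm WIS}$ with parameter $\alpha$ on $[0,\tau)$, and the time change $A_t=\int_0^t\tr(\adj Z_s)\,ds$ exhibits $\det Z_t=(\det X_t)^2$ as a BESQ$(\delta)$ process on a random clock, started from $(\det x)^2>0$. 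Since $\delta\ge 2$, the BESQ$(\delta)$ process a.s.\ never hits zero, so $\det X_t>0$ on $[0,\tau)$ and the probability that $\det X$ reaches $1/n$ by time $t$ likewise tends to $0$ as $n\to\infty$.

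Combining the two estimates, the event $\{\tau<\infty\}$ forces either $\|X_{\tau_n}\|=n$ or $\det X_{\tau_n}=1/n$ for all large $n$, and both probabilities vanish. Hence $\tau=\infty$ almost surely, yielding a global $E^o$-valued strong solution, and strong uniqueness is inherited from the local Lipschitz construction. The principal obstacle is the determinant estimate: the Wishart/BESQ time change of Section~\ref{S:WIS} was set up for the globally defined Dirichlet-form process, and here it must be rerun on the stochastic interval $[0,\tau)$ for the SDE solution, with a careful approximation via the stopping times $\tau_n$. The hypothesis $\delta\ge 2$ enters only through BESQ$(\delta)$ non-absorption at $0$, in line with the paper's remark that the regime $1<\delta<2$ remains open.
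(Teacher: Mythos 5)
Your proposal is correct and follows essentially the same route as the paper's McKean-type argument: construct a maximal local solution on $E^o$ via the locally Lipschitz drift, show $Z=X^\top X$ solves the ${\rm WIS}(\alpha,d)$ SDE on the stochastic interval $[0,\zeta)$, and then rule out norm explosion and determinant collapse via the Wishart facts that $\tr Z$ is ${\rm BESQ}(d\alpha)$ and $\det Z$ is a time-changed ${\rm BESQ}(\delta)$ which never hits zero for $\delta\ge 2$. The paper compresses both non-explosion facts into a citation of Bru and Corollary~\ref{C:Xprops}~(ii), whereas you write out the Markov-type estimates explicitly; the structure is otherwise identical.
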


\begin{proof}
The proof uses the so-called {\em McKean's argument}; see \cite[Section~4.1]{Mayerhofer:2011kx} for a thorough treatment in a related setting. We only sketch the proof here. Since $x\mapsto x^{-\top}$ is locally Lipschitz on $E^o$, a strong solution $X$ exists for $t<\zeta$, where $\zeta=\lim_{n\to\infty} \inf\{t\ge 0: \det X_t<n^{-1} \text{ or } \|X_t\| > n\}$. We claim that $\zeta=\infty$. To see this, set $Z=X^\top X$ and note that we have
\[
Z_t = \int_0^t X_s dW_s + \int_0^t dW_s^\top X_s^\top + (d-1+\delta)\Id\, t, \qquad t<\zeta,
\]
where we used the equality $dW_tdW_t = d\,\Id\, dt$. Defining $\widetilde W_t=\int_0^t (X_s^\top X_s)^{-1/2}X_sdW_s$, $t<\zeta$, we have
\[
Z_t = \int_0^t \sqrt{Z_s}d\widetilde W_s + \int_0^t d\widetilde W_s^\top \sqrt{Z_s} + (d-1+\delta)\Id t, \qquad t<\zeta,
\]
and after verifying that $\widetilde W$ is again $\M^d$-valued Brownian motion on $[0,\zeta)$, it follows that $Z$ is a ${\rm WIS}(d-1+\delta,d)$ process on $[0,\zeta)$ (this calculation is of course closely related to the one leading up to~\eqref{eq:WPhi}.) Since $\delta\ge 2$, well-known properties of the Wishart process (c.f.~\cite{Bru:1991}, or Corollary~\ref{C:Xprops}(ii) above) imply that $\det X_t=\sqrt{\det Z_t}$ stays strictly positive, and that $\|X_t\|^2=\tr Z_t$ is nonexplosive. Hence $\zeta=\infty$ as claimed, and the result follows.
\end{proof}

\section{Uniqueness} \label{S:MU}

The goal of this section is to establish uniqueness of the BESM semigroup. Specifically, we will prove that $(\Lcal,\CNeu)$ is \emph{Markov unique}. This means that there is at most one (and hence exactly one) symmetric sub-Markovian strongly continuous contraction semigroup on $L^2(E,m)$ whose generator extends $(\Lcal,\CNeu$), see \cite[Definition~1.1.2]{Eberle:1999uq}. Since the Hunt process corresponding to such a semigroup is unique up to equivalence, this form of uniqueness will hold for any realization of the BESM process as a Hunt process. In particular, uniqueness in law is guaranteed. (Two symmetric Hunt processes are called equivalent if their transition functions coincide outside a \emph{properly exceptional set}, see Section~4.1 in~\cite{Fukushima:2011ys}.)

Note that $m(\partial E)=0$. Therefore $L^2(E,m)$ and $L^2(E^o,m)$ can be identified, implying that it is enough to prove Markov uniqueness of $(\Lcal,\CNeu)$ as an operator on the latter space. To do this we will apply a general result by Eberle \cite[Corollary~3.2]{Eberle:1999uq} that relies on studying the relationship between various weighted Sobolev spaces, which we now introduce. To simplify notation we henceforth write
\[
\Omega = E^o = \{x\in\M^d: \det x>0\}.
\]
Observe that $1/(\det x)^{\delta-1}$ is locally integrable on $\Omega$. Hence by \cite[Theorem~1.5]{Kufner/Opic:1984}, $L^2(\Omega ,m)$ is continuously imbedded in $L^1_{\rm loc}(\Omega)$. In particular, every $f\in L^2(\Omega ,m)$ has a gradient $Df$ in the sense of distributions, and one can define the \emph{weak Sobolev space}
\[
W^{1,2}(\Omega ,m) = \left\{ f\in L^2(\Omega ,m) :  Df \in L^2(\Omega ,m; \M^d) \right\}.
\]
Equipped with the norm
\[
\|f\|_{W^{1,2}(\Omega ,m)} = \left( \int_\Omega  |f(x)|^2m(dx) + \int_\Omega  \|Df(x)\|^2m(dx) \right)^{1/2},
\]
$W^{1,2}(\Omega ,m)$ becomes a Hilbert space \cite[Theorem~1.11]{Kufner/Opic:1984}. Appendix~\ref{A:W12} reviews some basic properties of $W^{1,2}(\Omega ,m)$ that will be needed in the sequel. We also consider the following \emph{strong Sobolev spaces} (here the word \emph{completion} is always meant with respect to the norm $\|\cdot\|_{W^{1,2}(\Omega ,m)}$):
\begin{align*}
H^{1,2}(\Omega ,m) &= \text{completion of } C^1(\Omega ) \\[2mm]
\HNeu(\Omega ,m) &= \text{completion of } \CNeu \\[2mm]
H^{1,2}_0(\Omega ,m) &= \text{completion of } C^1_c(\Omega ).
\end{align*}
These are all Hilbert spaces by construction, and we automatically have
\begin{equation} \label{eq:incl111}
H^{1,2}_0(\Omega ,m) \subset \HNeu(\Omega ,m) \subset H^{1,2}(\Omega ,m) \subset W^{1,2}(\Omega ,m).
\end{equation}
The main result of this section shows that for any $\delta>0$, the last two inclusions are equalities; and if $\delta\ge 2$, all three inclusions are equalities. This will lead to Markov uniqueness of the BESM semigroup.

\begin{theorem} \label{T:MU}
The following statements hold.
\begin{enumerate}
\item\label{T:MU:1} If $0<\delta<2$, then $\HNeu(\Omega ,m)=W^{1,2}(\Omega ,m)$.
\item\label{T:MU:2} If $\delta \ge 2$, then $H^{1,2}_0(\Omega ,m)=W^{1,2}(\Omega ,m)$.
\end{enumerate}
\end{theorem}

Before giving the proof of Theorem~\ref{T:MU}, we note that Markov uniqueness now follows directly from the basic criterion for Markov uniqueness given in~\cite[Corollary~3.2]{Eberle:1999uq}, which only relies on the equality $W^{1,2}(\Omega,m)=\HNeu(\Omega,m)$.

\begin{corollary}
For any $\delta>0$, $(\Lcal,\CNeu)$ is Markov unique.
\end{corollary}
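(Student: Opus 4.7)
The plan is to appeal directly to Eberle's general criterion \cite[Corollary~3.2]{Eberle:1999uq}, which reduces Markov uniqueness of $(\Lcal,\CNeu)$ on $L^2(\Omega,m)$ to the single density identity
\[
\HNeu(\Omega,m) = W^{1,2}(\Omega,m).
\]
The preliminary observation, already noted in the section, is that $m(\partial E)=0$ canonically identifies $L^2(E,m)$ with $L^2(\Omega,m)$, so it suffices to establish Markov uniqueness on $\Omega$.

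To obtain the required Sobolev identity I would split into the two regimes covered by Theorem~\ref{T:MU}. When $0<\delta<2$, part~(i) of that theorem delivers the equality $\HNeu(\Omega,m) = W^{1,2}(\Omega,m)$ outright. When $\delta\ge 2$, part~(ii) gives the stronger statement $H^{1,2}_0(\Omega,m) = W^{1,2}(\Omega,m)$; sandwiching this with the automatic inclusions $H^{1,2}_0(\Omega,m)\subset \HNeu(\Omega,m)\subset W^{1,2}(\Omega,m)$ from~\eqref{eq:incl111} forces all three strong Sobolev spaces to coincide, and in particular yields the identity above.

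The remaining hypotheses of Eberle's criterion are essentially tautological in the present setup: $\CNeu$ is by construction a dense subset of $\HNeu(\Omega,m)$, the operator $(\Lcal,\CNeu)$ is symmetric on $L^2(\Omega,m)$ by~\eqref{eq:fLg}, and the associated quadratic form $f\mapsto \langle f,-\Lcal f\rangle$ coincides on $\CNeu$ with $\Ecal$, whose closure is the Dirichlet form encoding the minimal sub-Markovian extension. The substantive obstacle, namely the nontrivial Sobolev density results, has already been dispensed with in Theorem~\ref{T:MU}; the corollary itself is pure bookkeeping, consisting of invoking Eberle's criterion with the correct identification of the form domain, and then reading off uniqueness of the associated Hunt process up to equivalence in the standard manner.
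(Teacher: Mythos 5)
Your argument is correct and mirrors the paper's proof exactly: both invoke Eberle's criterion, which reduces Markov uniqueness to the identity $\HNeu(\Omega,m)=W^{1,2}(\Omega,m)$, and both obtain that identity from Theorem~\ref{T:MU} (with the case $\delta\ge 2$ handled by sandwiching via~\eqref{eq:incl111}). Nothing is missing; the corollary is, as you say, bookkeeping once Theorem~\ref{T:MU} is in hand.
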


Next, since every $f\in C^1_c(E)$ satisfies $f|_\Omega\in C^1(\Omega)$, we have $D(\Ecal)\subset H^{1,2}(\Omega,m)$. Furthermore, since $\CNeu\subset C^1_c(E)$ holds we have $\HNeu(\Omega,m)\subset D(\Ecal)$, and we deduce the following corollary of Theorem~\ref{T:MU}:

\begin{corollary}
For any $\delta>0$, $D(\Ecal)=W^{1,2}(\Omega,m)$.
\end{corollary}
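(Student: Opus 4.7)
The plan is to establish $D(\Ecal) = W^{1,2}(\Omega,m)$ by combining the two sandwich inclusions stated immediately before the corollary with Theorem~\ref{T:MU}, which provides the outer equality. The key identification, which makes everything fit together, is that $m(\partial E)=0$ allows us to identify $L^2(E,m)$ with $L^2(\Omega,m)$, and for $f\in C^1_c(E)$ the classical gradient $\nabla f$ agrees on $\Omega$ with the weak gradient $Df$. Consequently the Dirichlet form norm $\sqrt{\Ecal(f,f)+\|f\|_{L^2(E,m)}^2}$ and the $W^{1,2}(\Omega,m)$ norm differ only by the harmless factor $1/2$ on the gradient term and are therefore equivalent on $C^1_c(E)$.

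First I would record the chain
\[
\HNeu(\Omega,m)\ \subset\ D(\Ecal)\ \subset\ H^{1,2}(\Omega,m)\ \subset\ W^{1,2}(\Omega,m),
\]
where the first inclusion follows by taking closures on both sides of $\CNeu\subset C^1_c(E)$, the second follows from $C^1_c(E)|_\Omega\subset C^1(\Omega)$ together with the norm equivalence just noted, and the third is part of~\eqref{eq:incl111}. Then I would invoke Theorem~\ref{T:MU}: when $0<\delta<2$ the two outer terms coincide directly, while for $\delta\ge 2$ we have $W^{1,2}(\Omega,m)=H^{1,2}_0(\Omega,m)\subset\HNeu(\Omega,m)$ via~\eqref{eq:incl111}. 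In either case the chain collapses, forcing all four spaces to coincide.

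Since the substantive analytic work has already been carried out in Theorem~\ref{T:MU}, there is no real obstacle remaining; the corollary is essentially bookkeeping. The only point that merits a careful glance is the norm compatibility used in the middle inclusions, but that is routine: any sequence in $C^1_c(E)$ Cauchy with respect to the Dirichlet form norm is Cauchy in $L^2(\Omega,m)$ and, upon passing to gradients, in $L^2(\Omega,m;\M^d)$; the resulting $L^2$-limit of the gradients serves as the weak gradient of the $L^2$-limit, placing the limit in $W^{1,2}(\Omega,m)$ with comparable norm. This ensures that the closures are genuinely comparable as subspaces of $W^{1,2}(\Omega,m)$, so the inclusions used above are valid.
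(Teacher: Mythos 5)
Your argument is correct and follows essentially the same route as the paper: you sandwich $D(\Ecal)$ between $\HNeu(\Omega,m)$ and $H^{1,2}(\Omega,m)$ via the inclusions $\CNeu\subset C^1_c(E)$ and $C^1_c(E)|_\Omega\subset C^1(\Omega)$, and then collapse the chain using Theorem~\ref{T:MU} (with \eqref{eq:incl111} to handle the $\delta\ge 2$ case). Your explicit remark about the equivalence of the Dirichlet-form norm and the $W^{1,2}(\Omega,m)$ norm on $C^1_c(E)$, so that the abstract closure $D(\Ecal)$ is genuinely realized as a closed subspace of $W^{1,2}(\Omega,m)$, is exactly the point the paper leaves implicit; otherwise the two proofs coincide.
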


\begin{remark}
An application of Theorem~\ref{T:Ap} with $\alpha=\delta-1$ shows that $|\det x|^{\delta-1}$ is an $A_2$-weight if $0<\delta<2$. In particular, \cite[Theorem~2.5]{Kilpelainen:1994} then implies that the last inclusion in \eqref{eq:incl111} is in fact an equality.
\end{remark}

The proof of Theorem~\ref{T:MU} relies crucially on Theorems~\ref{T:cap0} and~\ref{T:Ap}. It also uses what we refer to as {\em tube segments}, discussed in Appendix~\ref{A:tube}, as well as some basic properties of the space $W^{1,2}(\Omega,m)$, reviewed in Appendix~\ref{A:W12}. Most of the difficulties arise for $0<\delta<2$. In fact, the case $\delta\ge 2$ only requires (the second part of) the following lemma, which uses Theorem~\ref{T:cap0} but not Theorem~\ref{T:Ap}. Henceforth, for any subset $\Gamma\subset\M^d$ we define
\[
W_\Gamma=\left\{ h\in W^{1,2}(\Omega,m)\ : \
\begin{array}{l}
\text{$h$ is bounded with compact support, }  \\
\text{$h=0$ on $U\cap\Omega$ for some open } U\subset\M^d \text{ with } \Gamma\subset U
\end{array}
\right\}.
\]

\begin{lemma} \label{L:dense1}
\begin{enumerate}
\item Let $\delta>0$ and define $\Gamma=\bigcup_{k\le d-2}M_k$. Then $W_\Gamma$ is dense in $W^{1,2}(\Omega,m)$.
\item Let $\delta\ge2$ and define $\Gamma=\bigcup_{k\le d-1}M_k=\partial E$. Then $W_\Gamma$ is dense in $W^{1,2}(\Omega,m)$.
\end{enumerate}
\end{lemma}

\begin{proof}
Since the elements $f\in W^{1,2}(\Omega,m)$ that are bounded with bounded support are dense (see Lemma~\ref{L:W12props}(iv)), it suffices to approximate such $f$ by elements $h\in W_\Gamma$. By scaling we may assume $|f|\le 1$. Let $\varepsilon>0$ be arbitrary, and let $C>0$ be the constant given by Lemma~\ref{L:W12fg}. By Theorem~\ref{T:cap0} there is a neighborhood $U$ of $\Gamma$ and an element $g\in W^{1,2}(\M^d,m)$ such that $g\ge1$ on $U$ and $\|g\|_{W^{1,2}(\Omega,m)}^2\le \varepsilon/C$. By truncating (using Lemma~\ref{L:W12props}(ii)) we may assume $|g|\le 1$. Define $h=(1-g)f$. Lemma~\ref{L:W12fg} then yields
\[
\|f-h\|^2_{W^{1,2}(\Omega,m)} = \|fg\|^2_{W^{1,2}(\Omega,m)} \le C  \frac{\varepsilon}{C} + \varepsilon = 2\varepsilon.
\]
This proves the lemma.
\end{proof}

\begin{proof}[Proof of Theorem~\ref{T:MU}\ref{T:MU:2}]
In view of \eqref{eq:incl111} it is clear that it suffices to prove $W^{1,2}(\Omega,m)\subset H^{1,2}_0(\Omega,m)$, so we pick $f\in W^{1,2}(\Omega,m)$. By Lemma~\ref{L:dense1}, $f$ can be approximated by some bounded $h\in W^{1,2}(\Omega,m)$ whose support is bounded and bounded away from $\partial E$. By mollification (see Lemma~\ref{L:W12props}(i)), $h$ can in turn be approximated by some $g\in C^1_c(\Omega)$. Since $H^{1,2}_0(\Omega,m)$ is the completion of the set of such functions, we obtain $f\in H^{1,2}_0(\Omega,m)$, as desired.
\end{proof}

For $0<\delta<2$ the boundary no longer has zero capacity, which makes this case more delicate. The following lemma uses a powerful extension theorem due to Chua~\cite{Chua:1992} for weighted Sobolev spaces with Muckenhoupt weights. In particular, therefore, we will rely on the Muckenhoupt $A_2$ property of $|\det x|^{\delta-1}$ for $0<\delta<2$, which is asserted by Theorem~\ref{T:Ap}. Chua's theorem requires the domain to be a so-called {\em $(\varepsilon,\delta)$-domain}. Unfortunately, it does not appear straightforward to show that $\Omega$ itself is of this type. Instead we employ a partition of unity argument with an open cover consisting of tube segments; see Appendix~\ref{A:tube}. The intersection of $\Omega$ with a tube segment around some $x\in M_{d-1}$ is an $(\varepsilon,\delta)$-domain (indeed, a Lipschitz domain), and Chua's theorem becomes applicable.

\begin{lemma}  \label{L:dense2}
Let $0<\delta<2$ and define $\Gamma=\bigcup_{k\le d-2}M_k$. For any $f\in W_\Gamma$ there exists $g\in W^{1,2}(\M^d,m)$ satisfying $g=0$ on a neighborhood of~$\Gamma$ and $g=f$ on $\Omega$.
\end{lemma}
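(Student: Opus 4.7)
My plan is to extend $f$ across the smooth part $M_{d-1}$ of $\partial E$ using Chua's weighted Sobolev extension theorem~\cite{Chua:1992} locally on tube segments, and then to glue the local extensions via a partition of unity. Set $K=\supp f\subset\Omega$. Because $f\in W_\Gamma$ vanishes on some neighborhood $U\cap\Omega$ of $\Gamma$ in $\Omega$ and has bounded support, the closure $\overline K$ in $\M^d$ is compact with $\overline K\cap\Gamma=\emptyset$; hence $\overline K\cap\partial E$ is a compact subset of $M_{d-1}$.

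First I would cover $\overline K\cap M_{d-1}$ by finitely many tube segments $T_1,\ldots,T_N$ (Appendix~\ref{A:tube}) centered at points of $M_{d-1}$, chosen small enough that $\overline{T_i}\cap\Gamma=\emptyset$. Since $M_{d-1}$ is a smooth hypersurface at each of its points (because $\adj x=\nabla\det(x)\ne 0$ on $M_{d-1}$), the tube segments are designed so that each $T_i\cap\Omega$ is a Lipschitz domain, and in particular an $(\varepsilon,\delta)$-domain in the sense of Jones. Theorem~\ref{T:Ap} applied with exponent $\delta-1\in(-1,1)$ shows that $|\det x|^{\delta-1}$ is a Muckenhoupt $A_2$ weight, so Chua's theorem produces a bounded linear extension $f_i\in W^{1,2}(T_i,m)$ of $f|_{T_i\cap\Omega}$. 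Taking a smooth partition of unity $\{\psi_i\}$ subordinate to $\{T_i\}$ on $W=\bigcup_i T_i$, the function $\tilde f=\sum_i\psi_i f_i$ belongs to $W^{1,2}(W,m)$ and agrees with $f$ on $W\cap\Omega$.

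The global gluing uses the open cover $\{V_1,V_2,V_3\}=\{W,\Omega,\M^d\setminus\overline K\}$ of $\M^d$ (which is indeed a cover because $\overline K\cap\partial E\subset W$) and a subordinate smooth partition of unity $\{\phi_1,\phi_2,\phi_3\}$. Setting
\[
g=\phi_1\tilde f+\phi_2 f
\]
(extended by zero outside $V_1\cup V_2$), one checks that on $\Omega$ the identity $\tilde f=f$ on $W\cap\Omega$ together with $\phi_3 f=0$ (since $\supp\phi_3\subset\M^d\setminus\overline K$) yields $g=(\phi_1+\phi_2+\phi_3)f=f$. Moreover $\supp\phi_1\subset W$ is disjoint from $\Gamma$ by the choice of tubes, while $\supp\phi_2\subset\Omega$ is closed and hence automatically disjoint from $\Gamma\subset\partial\Omega$; by closedness of both supports, $g\equiv 0$ on a single open neighborhood of $\Gamma$. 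Finally $g\in W^{1,2}(\M^d,m)$ because each summand is a product of a smooth bounded cutoff with a $W^{1,2}$ function, and both $\phi_1\tilde f$ and $\phi_2 f$ have bounded support (the latter via $\supp(\phi_2 f)\subset K$).

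The main obstacle is securing the local $(\varepsilon,\delta)$-regularity of the domains $T_i\cap\Omega$ required by Chua's theorem. This is precisely why the tube segments of Appendix~\ref{A:tube} are indispensable: $\Omega$ has no global $(\varepsilon,\delta)$-structure---indeed, as noted in the introduction, $\Omega$ does not lie on one side of its boundary---and only after localizing to a neighborhood of a smooth point of $M_{d-1}$ (where $\partial\Omega$ becomes a smooth graph) do we obtain a domain to which Chua's theorem applies. The other crucial input is Theorem~\ref{T:Ap}, without which the Muckenhoupt hypothesis would fail in the range $0<\delta<2$.
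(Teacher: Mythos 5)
Your proof is correct and follows essentially the same route as the paper: cover the relevant portion of $M_{d-1}$ by tube segments, apply Chua's extension theorem on each $T_i\cap\Omega$ (valid because $T_i\cap\Omega$ is Lipschitz and $|\det x|^{\delta-1}$ is $A_2$ by Theorem~\ref{T:Ap}), and glue via a smooth partition of unity. The only difference is cosmetic—you glue in two stages (first $\tilde f$ on $W$, then $g$ on $\M^d$) while the paper glues once using a cover $U_0,\ldots,U_n$ of $K$ with one extra chart $U_0\Subset\Omega$.
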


\begin{proof}
For each $x\in M_{d-1}$, let $U_x$ be a tube segment around~$x$, see Definition~\ref{D:tube} and Proposition~\ref{P:tube} in Appendix~\ref{A:tube}. Then $U_x\cap\Omega$ is diffeomorphic to $A\times(-1,1)\subset\R^{d-1}\times\R$, where $A$ is an open ball in $\R^{d-1}$. Hence $U_x\cap\Omega$ is a Lipschitz domain, and thus an $(\varepsilon,\delta)$-domain, see~\cite[p.~73]{Jones:1981}. Moreover, $f\in W^{1,2}(U_x\cap\Omega,m)$. Since $|\det x|^{\delta-1}$ is a Muckenhoupt $A_2$ weight by Theorem~\ref{T:Ap}, the extension theorem of Chua \cite[Theorem~1.1]{Chua:1992} yields an element
\[
\text{$f_x\in W^{1,2}(\M^d,m)$ with $f_x=f$ on $U_x\cap\Omega$.}
\]
Now, since $f\in W_\Gamma$, there is a compact set $K\subset\M^d$, bounded away from $\Gamma$, with $f=0$ on $\Omega\setminus K$. By compactness we can choose finitely many of the $U_x$, say $U_1,\ldots,U_n$ (and corresponding $f_1,\ldots,f_n$), as well some open $U_0\subset\M^d$ with $\overline U_0\subset\Omega$, such that $K\subset\bigcup_{i=0}^n U_i$. Defining $f_0=\psi f$, where $\psi$ is a smooth cutoff function with $\psi=1$ on $U_0$ and $\psi=0$ outside some $V_0\Subset\Omega$ with $U_0\Subset V_0$, we have $f_0\in W^{1,2}(\M^d,m)$ with $f_0=f$ on $U_0$. We then let $\{\psi_0,\ldots,\psi_n\}$ be a smooth partition of unity subordinate to $U_0,\ldots,U_n$, and define
\[
g = \sum_{i=0}^n \psi_i f_i \in W^{1,2}(\M^d,m).
\]
Since $x\in U_i$ holds whenever $\psi_i(x)\ne0$, and since $f_i=f$ on $U_i\cap\Omega$, we get $g=\sum_{i=0}^n \psi_i f= f$ on $\Omega$, as desired.
\end{proof}

Using Lemma~\ref{L:dense2}, one can replace mollification of elements in $W^{1,2}(\Omega,m)$ by mollification of elements in $W^{1,2}(\M^d,m)$. The latter is straightforward, while the former is not. The upshot is the following result.

\begin{lemma}  \label{L:dense3}
Let $0<\delta<2$ and define $\Gamma=\bigcup_{k\le d-2}M_k$. Then $C^\infty_c(E\setminus\Gamma)$ is dense in $W^{1,2}(\Omega,m)$.
\end{lemma}

\begin{proof}
By Lemma~\ref{L:dense1} it suffices to approximate elements $f\in W_\Gamma$. By Lemma~\ref{L:dense2} we then have $f=g|_\Omega$ for some $g\in W^{1,2}(\M^d,m)$ with $g=0$ near~$\Gamma$. By mollification, see \cite[Lemma~1.5]{Kilpelainen:1994}, we obtain $h\in C^\infty(\M^d)$ that approximates $g$ in $W^{1,2}(\M^d,m)$-norm. Choosing the support of the mollifier sufficiently small, we still have $h=0$ on a neighborhood of~$\Gamma$. Then $h|_{\Omega}\in C^\infty_c(E\setminus\Gamma)$ approximates~$f$.
\end{proof}

\begin{proof}[Proof of Theorem~\ref{T:MU}\ref{T:MU:1}]
In view of \eqref{eq:incl111} and the definition of $\HNeu(\Omega,m)$, we need to prove that $\CNeu$ is dense in $W^{1,2}(\Omega,m)$. By Lemma~\ref{L:dense3} it suffices to approximate elements $f\in C^\infty_c(E\setminus\Gamma)$, where $\Gamma=\bigcup_{k\le d-2}M_k$. An approximating function $h\in\CNeu$ can be constructed explicitly, relying on the fact that $f=0$ on a neighborhood of~$\Gamma$. We now give the details.

For $\varepsilon > 0$, let $\phi_\varepsilon\in C^2(\R_+)$ satisfy the following properties:
\begin{enumerate}
\item $\phi_\varepsilon(t) = 0$ for $t \geq \varepsilon$,
\item $|\phi_\varepsilon(t)|\leq \varepsilon$ and $|\phi'_\varepsilon(t)| \leq 3$ for all $t> 0$.
\item $\frac{1-\phi'_\varepsilon(t)}{t}$ and $\frac{\phi_\varepsilon(t)}{t}$ are bounded in $t$ (where the bound may depend on $\varepsilon$),
\end{enumerate}
Such $\phi_\varepsilon$ exists: first set $\phi_1(t)=t\psi(t)$ where $\psi$ is some smooth cutoff function, and then $\phi_\varepsilon(t)=\varepsilon \phi_1(t/\varepsilon)$. Now, let $K$ denote the support of $f$ and define
\[
g=\frac{\nabla f \bullet \nabla\det}{\|\nabla\det\|^2}\oo_K,
\qquad G = \nabla f - g\,\nabla\det.
\]
Note that $\nabla\det(x)=\adj x^\top\ne 0$ for all $x\in K \subset \M^d\setminus \Gamma$, so that $g$ is well-defined and in $C^2_c(E)$. Moreover, we have $G \bullet \nabla\det = 0$. Consider the function
\begin{equation} \label{eq:MU1}
h_\varepsilon = f - g\, \phi_\varepsilon\circ\det.
\end{equation}
We claim that $h_\varepsilon\in\CNeu$ and $h_\varepsilon \to f$ in $W^{1,2}(\Omega,m)$ as $\varepsilon\downarrow 0$. To prove this we first obtain, via a calculation using the chain and product rules, the following two expressions for~$\nabla h_\varepsilon$:
\begin{align}
\nabla h_\varepsilon &= \nabla f - (\phi'_\varepsilon\circ\det)\, g\, \nabla\det - (\phi_\varepsilon\circ\det)\nabla g
\label{eq:MU2} \\[2mm]
&= (1 - \phi'_\varepsilon\circ\det)\nabla f + (\phi'_\varepsilon\circ\det) G - (\phi_\varepsilon\circ\det)\nabla g.
\label{eq:MU3}
\end{align}
Equations \eqref{eq:MU1} and \eqref{eq:MU2} and properties~(i) and~(ii) of $\phi_\varepsilon$ yield the pointwise inequalities
\[
|f - h_\varepsilon| \le \varepsilon |g|
\]
and
\[
\|\nabla f - \nabla h_\varepsilon\| \le \Big(3 |g|\, \|\nabla\det\| + \|\nabla g\| \Big) {\bf 1}_{[0,\varepsilon]}\circ \det.
\]
Together with the fact that $m(\{x\in K: 0\le\det \le \varepsilon\})$ tends to zero as $\varepsilon\downarrow 0$, this yields $h_\varepsilon\in\CNeu$ and $h_\varepsilon \to f$ in $W^{1,2}(\Omega,m)$. It remains to check $h_\varepsilon \in \CNeu$. Clearly $h_\varepsilon\in C^2_c(\Omega)$. Moreover, \eqref{eq:MU3} together with the orthogonality $G\bullet\nabla\det=0$, as well as the fact that $x^{-\top}=\nabla\det(x) / \det(x)$, yield
\[
x^{-\top} \bullet \nabla h_\varepsilon = \frac{1-\phi_\varepsilon'\circ\det}{\phi_\varepsilon\circ\det}\, \nabla\det\,\bullet\,\nabla f  - \frac{\phi_\varepsilon\circ\det}{\det}\,\nabla\det\,\bullet\,\nabla g.
\]
Property~(iii) of $\phi_\varepsilon$ implies that the right side is bounded, as required. This completes the proof.
\end{proof}

\begin{remark}
Using results in \cite{Drusvyatskiy/Larsson:2015}, the space $\CNeu$ can be shown to be dense in $C^2_c(E)$ with respect to the norm $\|\cdot\|_{W^{1,2}(\Omega,m)}$. An alternative approach to proving Theorem~\ref{T:MU}\ref{T:MU:1} would therefore be to show directly that $C^2_c(E)$ is dense in $W^{1,2}(\Omega,m)$, for example by showing that $\Omega$ is an $(\varepsilon,\delta)$-domain and then apply Chua's extension theorem. Proving the $(\varepsilon,\delta)$~property does not appear to be straightforward---one obstruction is that $\Omega$ does not lie on one side of its boundary, as discussed in the Introduction.
\end{remark}

\section{Low-rank matrices have zero capacity} \label{S:cap0}

This section is devoted to proving that the sets $M_k$ consisting of rank~$k$ matrices have zero capacity for all sufficiently small~$k$. This is a key ingredient in the proof of Theorem~\ref{T:MU}, and also interesting in its own right. We use the following notion of capacity. For any subset $A\subset \M^d$, define
\[
{\rm Cap}(A) = \inf_f \int_{\M^d} \left( |f(x)|^2  + \|D f(x)\|^2\right) m(dx),
\]
where the infimum is taken over all $f\in W^{1,2}(\M^d,m)$ with $f\ge 1$ on an open neighborhood of $A$. The main result is the following.

\begin{theorem} \label{T:cap0}
Let $\delta>0$. For $k\in\{0,\ldots,d-2\}$, we have ${\rm Cap}(M_k)=0$. If $\delta\ge 2$, the same thing holds also for $k=d-1$.
\end{theorem}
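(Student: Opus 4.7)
My plan is to reduce the statement to a one-point capacity calculation in a lower-dimensional matrix space, by means of a local Schur-complement change of variables.

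The model estimate I would establish first is: for the measure $m_{d'}(dy) = |\det y|^{\delta-1}dy$ on $\M^{d'}$, the singleton $\{0\}$ has zero capacity whenever $d'\ge 2$ and $\delta>0$, and also when $d'=1$ and $\delta\ge 2$. This is done with the standard logarithmic cutoff
\[
\phi_\varepsilon(y) = \max\Bigl(0,\,\min\Bigl(1,\,\frac{\log(c/\|y\|)}{\log(c/\varepsilon)}\Bigr)\Bigr),
\]
which is Lipschitz with compact support, equal to $1$ on $\{\|y\|\le\varepsilon\}$. Writing $y=ru$ with $r=\|y\|$ and $u\in\S^{d'^2-1}$ and using $|\det y|^{\delta-1}=r^{d'(\delta-1)}|\det u|^{\delta-1}$, the critical estimate is
\[
\int \|\nabla\phi_\varepsilon\|^2 |\det y|^{\delta-1}\,dy \;=\; \frac{C_{\delta,d'}}{\log^2(c/\varepsilon)}\int_\varepsilon^c r^{d'(d'+\delta-1)-3}\,dr,
\]
where $C_{\delta,d'}=\int_{\S^{d'^2-1}} |\det u|^{\delta-1}du<\infty$ by Theorem~\ref{T:Radon}. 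The radial exponent exceeds $-1$ precisely when $d'(d'+\delta-1)>2$, which covers $d'\ge 2$, $\delta>0$ as well as $d'=1$, $\delta>2$; in the borderline case $d'=1$, $\delta=2$ the exponent equals $-1$, the inner integral is $\log(c/\varepsilon)$, and the total is still $O(1/\log(c/\varepsilon))\to 0$. Dominated convergence disposes of the $L^2$ term.

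To reduce $M_k\subset\M^d$ to this model, I would use the Schur complement. Fix $x_0\in M_k$. The action $(U,V)\cdot x = U^\top x V$ of $O(d)\times O(d)$ is a Euclidean isometry of $\M^d$ and preserves $m$, so by SVD I may assume $x_0 = \bigl(\begin{smallmatrix}D_+ & 0 \\ 0 & 0\end{smallmatrix}\bigr)$ with $D_+\in\M^k$ invertible. For $x=\bigl(\begin{smallmatrix}A & B \\ C & E\end{smallmatrix}\bigr)$ close to $x_0$ the assignment $(A,B,C,E)\mapsto(A,B,C,y)$ with $y:=E-CA^{-1}B$ is a smooth diffeomorphism with unit Jacobian on a chart $U_{x_0}$ where $A$ remains close to $D_+$, and satisfies $\det x=\det A\cdot\det y$ together with $\rk x = k+\rk y$. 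Thus locally $M_k$ corresponds to $\{y=0\}$ with $y\in\M^{d-k}$, and
\[
m(dx) = |\det A|^{\delta-1}\,|\det y|^{\delta-1}\,dA\,dB\,dC\,dy,
\]
with $|\det A|^{\delta-1}$ uniformly bounded on $U_{x_0}$. On $U_{x_0}$ I take $f_\varepsilon^{x_0}(x)=\chi(A,B,C)\,\phi_\varepsilon(y)$, where $\chi$ is a fixed smooth bump compactly supported near $(D_+,0,0)$ and equal to $1$ on a smaller neighborhood. Since the Schur chart has bounded differential and inverse differential on $U_{x_0}$, the norm $\|f_\varepsilon^{x_0}\|_{W^{1,2}(\M^d,m)}^2$ is controlled by the sum of $\int\phi_\varepsilon^2|\det y|^{\delta-1}dy$ and $\int\|\nabla\phi_\varepsilon\|^2|\det y|^{\delta-1}dy$ from the model estimate, and so tends to $0$. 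In particular $f_\varepsilon^{x_0}\ge 1$ on an open neighborhood of a relatively open piece $V_{x_0}\cap M_k\ni x_0$, showing ${\rm Cap}(V_{x_0}\cap M_k)=0$.

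To pass from local to global, I would use countable subadditivity of the Sobolev capacity: if $f_n\in W^{1,2}(\M^d,m)$ with $f_n\ge 1$ on open sets $G_n\supset A_n$, then $g:=\sup_n f_n$ satisfies $g\ge 1$ on $\bigcup_n G_n\supset\bigcup_n A_n$, and $\|g\|_{W^{1,2}}^2\le\sum_n\|f_n\|_{W^{1,2}}^2$ (use $\nabla g=\nabla f_{n(x)}$ $m$-a.e.~on the strata where $f_{n(x)}$ is the maximizer). The cover $\{V_{x_0}\}_{x_0\in M_k}$ admits a countable subcover by the Lindel\"of property, and summing the local estimates yields ${\rm Cap}(M_k)=0$. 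I expect the main obstacle to be the borderline case $k=d-1$, $\delta=2$: here the radial exponent is exactly $-1$, so a Lipschitz cutoff produces a bound that is only $O(1)$ and the genuine logarithmic cutoff is essential. A secondary technical point is controlling the pullback of the Euclidean gradient on $\M^d$ through the Schur chart by the $(A,B,C,y)$-gradient, which is routine once one notes that $A$ stays invertible throughout $U_{x_0}$ and the chart is a bi-Lipschitz diffeomorphism there.
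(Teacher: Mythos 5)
Your argument is correct, and it takes a genuinely different route from the one in the paper. Both proofs reduce to a local estimate near a fixed $\overline x\in M_k$ followed by countable subadditivity of capacity, but the local mechanism differs in two ways. First, the paper works with an abstract tube segment $\Phi:A\times B_{\overline\varepsilon}\to U$ supplied by the tubular neighborhood theorem (Appendix~\ref{A:tube}) and then controls the measure of the $\varepsilon$-tube via the one-sided determinant growth bound $|\det(\overline x+v)|\le c_k(\overline x)\|v\|^{d-k}$ (Lemma~\ref{L:detlr}); your Schur-complement chart $(A,B,C,E)\mapsto(A,B,C,\,E-CA^{-1}B)$ gives instead an \emph{exact} local factorization $m(dx)=|\det A|^{\delta-1}|\det y|^{\delta-1}\,dA\,dB\,dC\,dy$, which reduces the problem to the single-point capacity of $\{0\}$ in $\M^{d-k}$ for the same weight in lower dimension. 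This is cleaner and structurally more informative than the inequality in Lemma~\ref{L:detlr}. Second, your cutoff is logarithmic from the outset, so the radial integral $\int_\varepsilon^c r^{d'(d'+\delta-1)-3}\,dr$ divided by $\log^2(c/\varepsilon)$ tends to zero uniformly across all cases, including the borderline $k=d-1$, $\delta=2$, where the paper needs a bespoke cutoff (Lemma~\ref{L:d2}) because its scaled bump $\phi(\varepsilon^{-1}\cdot)$ produces only an $O(1)$ bound. The trade-off is that your argument is tied to the specific algebraic structure of $M_k$ and the determinant weight, whereas the tube-segment machinery is reused in the paper also for Lemma~\ref{L:dense2} and would carry over to other stratifications; but for the present theorem your route is arguably more direct and more self-contained. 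The two minor points you flag — passing the gradient through the bi-Lipschitz Schur chart, and finiteness of $\int_{\mathbb{S}^{d'^2-1}}|\det u|^{\delta-1}\,du$ via Theorem~\ref{T:Radon} and polar factorization — are indeed routine and handled exactly as you indicate.
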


\begin{remark}
The above definition of capacity differs from the {\em $(1,m)$-Sobolev capacity} in \cite[Definition~2.35]{Heinonen:2006fk}, where $W^{1,2}(\M^d,m)$ is replaced by $H^{1,2}(\M^d,m)$. It also differs from the {\em $1$-capacity} in \cite[Eqs.~(2.1.1)--(2.1.3)]{Fukushima:2011ys}, where $W^{1,2}(\M^d,m)$ is replaced by $D(\Ecal)$. However, Theorem~\ref{T:MU} and its corollaries imply that any $A\subset E$ with ${\rm Cap}(A)=0$ also has zero capacity in all the above senses.
\end{remark}

The core of the proof of Theorem~\ref{T:cap0} is an application of the following lemma, which bounds the growth of the determinant function near a point $x\in M_k$.

\begin{lemma} \label{L:detlr}
Let $k\le d-1$. There is a locally Lipschitz function $c_k:M_k\to\R_+$ such that
\[
|\det(x+v)| \le c_k(x)\|v\|^{d-k}, \qquad x\in M_k, \ v\in \M^d, \ \|v\|\le 1.
\]
\end{lemma}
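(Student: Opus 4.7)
My plan is to use the singular value decomposition, which captures rank defects cleanly. Recall that for any $A\in\M^d$, its singular values $\sigma_1(A)\ge\cdots\ge\sigma_d(A)\ge 0$ satisfy $|\det A|=\prod_{i=1}^d\sigma_i(A)$, and by the Weyl perturbation inequality
\[
|\sigma_i(A)-\sigma_i(B)|\le\|A-B\|_{\rm op}\le\|A-B\|, \qquad i=1,\ldots,d.
\]
The key observation is that $x\in M_k$ is equivalent to $\sigma_{k+1}(x)=\cdots=\sigma_d(x)=0$. So a perturbation to $x+v$ moves these $d-k$ zero singular values by at most $\|v\|$.

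Concretely, for $x\in M_k$ and $v\in\M^d$, Weyl gives $\sigma_i(x+v)\le\|v\|$ for $i>k$ and $\sigma_i(x+v)\le\sigma_i(x)+\|v\|$ for $i\le k$. Splitting the product for the determinant yields
\[
|\det(x+v)|=\prod_{i=1}^k\sigma_i(x+v)\,\prod_{i=k+1}^d\sigma_i(x+v)\le\|v\|^{d-k}\prod_{i=1}^k\bigl(\sigma_i(x)+\|v\|\bigr).
\]
Under the assumption $\|v\|\le 1$, the last factor is bounded by $\prod_{i=1}^k(1+\sigma_i(x))$, so one takes
\[
c_k(x)=\prod_{i=1}^k\bigl(1+\sigma_i(x)\bigr).
\]

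It remains to check that $c_k$ is locally Lipschitz on $M_k$. By Weyl's inequality each $\sigma_i$ is globally $1$-Lipschitz on $\M^d$, hence continuous and locally bounded; a finite product of nonnegative locally bounded, locally Lipschitz functions is itself locally Lipschitz, so $c_k$ is locally Lipschitz on $\M^d$, and in particular on $M_k$.

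I do not foresee a serious obstacle: once the singular value formula for $|\det|$ and Weyl's inequality are invoked, the argument is essentially a single line. The only thing to be careful about is which norm is used throughout (the Frobenius norm dominates the operator norm, which is what makes Weyl applicable as stated).
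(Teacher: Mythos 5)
Your proof is correct, and it takes a more elementary route than the paper. The paper invokes a perturbation bound of Bhatia and Jain (their Corollary~5),
\[
|\det(x+v)-\det(x)| \le \sum_{i=1}^d p_{d-i}(\sigma_1(x),\ldots,\sigma_d(x))\,\|v\|^i,
\]
with $p_j$ the elementary symmetric polynomials, and then observes that $p_{d-i}(\sigma(x))=0$ whenever $d-i>k$ because $x$ has only $k$ nonzero singular values; the local Lipschitz property of $c_k$ is then inherited from the Lipschitz continuity of the singular-value map and the smoothness of the $p_j$. You bypass the Bhatia--Jain inequality entirely: you factor $|\det|$ as the product of singular values and apply Weyl's perturbation bound $\sigma_i(x+v)\le\sigma_i(x)+\|v\|$ directly, noting that the last $d-k$ factors are each at most $\|v\|$. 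The two arguments rest on the same underlying fact (Lipschitz continuity of the singular values, which \emph{is} Weyl's inequality), but yours is self-contained and requires no external citation beyond standard linear algebra, at the cost of a slightly cruder constant $\prod_{i\le k}(1+\sigma_i(x))$ versus the paper's $\sum_{i=d-k}^d p_{d-i}(\sigma(x))$; for the purposes of the lemma either constant is equally good since all that matters is local Lipschitz continuity on $M_k$.
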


\begin{proof}
By \cite[Corollary~5]{Bhatia/Jain:2009},
\[
|\det(x+v)-\det(x)| \le \sum_{i=1}^d p_{d-i}(\sigma_1(x), \ldots, \sigma_d(x)) \|v\|^i,
\]
where $\sigma(x)=(\sigma_1(x),\ldots,\sigma_d(x))$ is the vector of singular values of $x$, and $p_i$ is the $i$:th elementary symmetric polynomial in $d$ variables. Now, $p_{d-i}(\sigma_1(x),\ldots,\sigma_d(x))$ consists of a sum of terms, each of which is the product of $d-i$ distinct elements of $\sigma(x)$. However, since $\rk x=k$, only $k$ of those elements are nonzero. Therefore the product must contain at least one zero factor whenever $d-i>k$, implying that $p_{d-i}(\sigma_1(x),\ldots,\sigma_d(x))=0$ for these~$i$. Since in addition $\|v\|\le 1$ and $\det x=0$, we get
\[
|\det(x+v)| \le \|v\|^{d-k} \sum_{i=d-k}^d p_{d-i}(\sigma_1(x), \ldots, \sigma_d(x)).
\]
The local Lipschitz property follows from the smoothness of $p_{d-i}$ and the fact that the singular value map is Lipschitz continuous, see~\cite[Theorem~7.4.51]{Horn/Johnson:1985}.
\end{proof}

In proving Theorem~\ref{T:cap0}, the case $\delta=2$, $k=d-1$, turns out to require separate treatment using the following lemma.

\begin{lemma} \label{L:d2}
For each $\varepsilon<1$ there is a Lipschitz function $\phi_\varepsilon:\R_+ \to \R$ such that $0\le \phi_\varepsilon \le 1$, $\phi_\varepsilon=0$ on $[\varepsilon,\infty)$, $\phi_\varepsilon=1$ on a neighborhood of zero, and
\begin{equation} \label{eq:Ld2}
\lim_{\varepsilon\downarrow 0} \int_{\R_+} |\phi_\varepsilon'(t)|^2\, t\, dt = 0.
\end{equation}
\end{lemma}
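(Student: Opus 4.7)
The plan is to build $\phi_\varepsilon$ as a logarithmic cutoff, which is the natural way to exploit the fact that the one-dimensional weight $t\,dt$ gives the origin zero capacity. Concretely, for each $\varepsilon\in(0,1)$ I will choose an auxiliary parameter $\delta=\delta(\varepsilon)\in(0,\varepsilon)$ (say $\delta=\varepsilon^2$) and set
\[
\phi_\varepsilon(t) = \begin{cases} 1, & 0\le t\le \delta, \\[1mm] \dfrac{\log(\varepsilon/t)}{\log(\varepsilon/\delta)}, & \delta\le t\le \varepsilon, \\[3mm] 0, & t\ge \varepsilon. \end{cases}
\]
This is continuous, piecewise $C^1$, and satisfies $0\le\phi_\varepsilon\le 1$ by monotonicity of the logarithm. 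It equals $1$ on the neighborhood $[0,\delta)$ of zero and vanishes on $[\varepsilon,\infty)$.

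Next I would verify the Lipschitz property. Away from the gluing points $t=\delta$ and $t=\varepsilon$, $\phi_\varepsilon$ is smooth; on $(\delta,\varepsilon)$ we have $\phi_\varepsilon'(t) = -1/(t\log(\varepsilon/\delta))$, whose absolute value attains its maximum $1/(\delta\log(\varepsilon/\delta))$ at $t=\delta$. Elsewhere $\phi_\varepsilon'=0$, so $\phi_\varepsilon$ is globally Lipschitz.

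For the crucial estimate I compute directly:
\[
\int_{\R_+} |\phi_\varepsilon'(t)|^2\,t\,dt = \int_\delta^\varepsilon \frac{1}{t^2\,(\log(\varepsilon/\delta))^2}\,t\,dt = \frac{\log(\varepsilon/\delta)}{(\log(\varepsilon/\delta))^2} = \frac{1}{\log(\varepsilon/\delta)}.
\]
With the choice $\delta=\varepsilon^2$ we get $\log(\varepsilon/\delta)=\log(1/\varepsilon)\to\infty$ as $\varepsilon\downarrow 0$, so the integral tends to zero, establishing \eqref{eq:Ld2}.

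I do not expect any real obstacle here: the argument is the standard logarithmic capacity cutoff in disguise, and the only delicate point is picking $\delta$ shrinking fast enough that $\log(\varepsilon/\delta)\to\infty$ while still keeping $\delta>0$ so that the ``neighborhood of zero'' requirement holds. If one wished the function to be $C^\infty$ rather than just Lipschitz, a trivial mollification of the above piecewise-defined $\phi_\varepsilon$ would do, but the statement only demands Lipschitz regularity, so the construction above suffices.
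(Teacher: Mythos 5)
Your proof is correct, and it takes a genuinely different route from the paper's. You use the standard logarithmic capacity cutoff
\[
\phi_\varepsilon(t) = \frac{\log(\varepsilon/t)}{\log(\varepsilon/\delta)} \quad \text{on } [\delta,\varepsilon], \qquad \delta=\varepsilon^2,
\]
for which the energy integral is computed in closed form as $1/\log(\varepsilon/\delta) = 1/\log(1/\varepsilon) \to 0$. The paper instead builds $\phi_\varepsilon=g_\varepsilon+h_\varepsilon$ from a power-law piece $g_\varepsilon(t)=(1-(t/\varepsilon)^\varepsilon)_+$ and a compensating piece $h_\varepsilon$ that is $(t/\varepsilon)^\varepsilon$ near the origin and linear on a short bridging interval; the sum is identically $1$ on $[0,\varepsilon^{1+1/\varepsilon})$ and the two energy contributions are estimated separately ($\varepsilon/2$ and $\tfrac{3}{2}\varepsilon^2$). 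Both constructions exploit the same underlying fact, that a point has zero capacity for the two-dimensional (here: weighted one-dimensional $t\,dt$) energy, but your logarithmic cutoff is shorter, yields the sharp decay rate $1/\log(1/\varepsilon)$ transparently, and avoids the bookkeeping needed to verify that the paper's piecewise definition glues continuously and that $2\varepsilon^{1+1/\varepsilon}<\varepsilon$. One could argue the paper's power-law version gives a polynomial decay $O(\varepsilon)$ of the energy rather than your logarithmic $O(1/\log(1/\varepsilon))$, but the lemma only requires the limit to be zero, so nothing is lost.
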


\begin{proof}
Define functions $g_\varepsilon$ and $h_\varepsilon$ on $\R_+$ by
\[
g_\varepsilon(t) = \left(1-\left(\frac{t}{\varepsilon}\right)^\varepsilon\right)_+ \quad\text{and}\quad
h_\varepsilon(t) = \left\{
\begin{array}{ll}
(t/\varepsilon)^\varepsilon		& t \in [0, \varepsilon^{1+1/\varepsilon}) \\[2mm]
2\varepsilon - \varepsilon^{-1/\varepsilon}t &    t \in [\varepsilon^{1+1/\varepsilon}, 2 \varepsilon^{1+1/\varepsilon})\\[2mm]
0						& t\in [2\varepsilon^{1+1/\varepsilon},\infty)
\end{array}
\right.
\]
We claim that the function $\phi_\varepsilon=g_\varepsilon+h_\varepsilon$ has the stated properties. It is not hard to check that $0\le \phi_\varepsilon\le 1$ and that $\phi_\varepsilon$ equals zero on $[\varepsilon,\infty)$ and one on $[0, \varepsilon^{1+1/\varepsilon})$. The Lipschitz property then follows easily. It remains to verify \eqref{eq:Ld2}. First, note that
\[
\int_{\R_+} |g_\varepsilon'(t)|^2\, t\, dt = \varepsilon^{2-2\varepsilon} \int_0^\varepsilon t^{2\varepsilon-1}dt = \frac{\varepsilon}{2} \to 0 \quad (\varepsilon\downarrow 0).
\]
Moreover, since $|h_\varepsilon'| = |g_\varepsilon'|$ on $[0,\varepsilon^{1+1/\varepsilon})$, and since
\[
\int_{\varepsilon^{1+1/\varepsilon}}^{2\varepsilon^{1+1/\varepsilon}}|h_\varepsilon'(t)|^2\, t\, dt = \varepsilon^{-2/\varepsilon} \frac{(2\varepsilon^{1+1/\varepsilon})^2-(\varepsilon^{1+1/\varepsilon})^2}{2} = \frac{3}{2}\varepsilon^2 \to 0 \quad (\varepsilon\downarrow 0),
\]
it follows that $\lim_{\varepsilon\downarrow 0} \int_{\R_+}|h_\varepsilon'(t)|^2\, t\, dt = 0$. We now deduce~\eqref{eq:Ld2}.
\end{proof}

We are now ready to prove Theorem~\ref{T:cap0}. The proof uses the tube segments discussed in Appendix~\ref{A:tube}.

\begin{proof}[Proof of Theorem~\ref{T:cap0}]
Suppose for any fixed $\overline x\in M_k$ we can find a bounded neighborhood $U$ of $\overline x$ in $\M^d$ and bounded functions $g_\varepsilon\in W^{1,2}(U,m)$ such that each $g_\varepsilon$ equals one on a neighborhood of $M_k\cap U$, and $\lim_{\varepsilon\downarrow0}\|g_\varepsilon\|_{W^{1,2}(U,m)}=0$ holds. We then take an open set $V\subset\M^d$ with $\overline V\subset U$, and a smooth cutoff function $\phi\in C_c^\infty(\M^d)$ with $\phi=1$ on $V$ and $\phi=0$ on $\M^d\setminus U$. The function $f_\varepsilon=\phi g_\varepsilon$ then lies in $W^{1,2}(\M^d,m)$, is equal to one on a neighborhood of $M_k\cap V$, and satisfies $\lim_{\varepsilon\downarrow0}\|f_\varepsilon\|_{W^{1,2}(\M^d,m)}=0$ by Lemma~\ref{L:W12UVfg}. It follows that ${\rm Cap}(M_k\cap V)=0$. Since $M_k$ can be covered by countably many such sets $M_k\cap V$, we deduce ${\rm Cap}(M_k)=0$, as desired.

We thus focus on finding functions $g_\varepsilon$ as above. To this end, set $M=M_k$, $n_1=d^2-(d-k)^2$, $n_2=(d-k)^2$, pick $\overline x\in M$, and let $U$ be a tube segment around~$\overline x$, see Definition~\ref{D:tube} and Proposition~\ref{P:tube} in Appendix~\ref{A:tube}. Let
\[
\Phi:A\times B_1\to U
\]
be the corresponding diffeomorphism, where $A\subset\R^{n_1}$ is an open ball, and $B_1\subset\R^{n_2}$ is the open unit ball. Let $\pi:\R^{n_1}\times\R^{n_2}\to\{0\}\times\R^{n_2}$ be the projection onto the last $n_2$ coordinates. Let $\phi\in C^\infty(\R_+)$ be a cutoff function valued in $[0,1]$, equal to one on $[0,1/2]$, equal to zero on $[1,\infty)$, and with $|\phi'(t)|\le 3$ for all $t\in\R_+$. For each $0<\varepsilon<1$, define a map
\[
g_\varepsilon\ :\ U \to \R, \qquad g_\varepsilon\ =\ \phi(\varepsilon^{-1}\|\pi\circ\Phi^{-1}\|).
\]
We then have $g_\varepsilon\in C^\infty(U)$ and $g_\varepsilon=1$ on $\Phi(A\times B_{\varepsilon/2})$, a neighborhood of $M\cap U$. It remains to prove $g_\varepsilon\in W^{1,2}(U,m)$ and $\lim_{\varepsilon\downarrow0}\|g_\varepsilon\|_{W^{1,2}(U,m)}=0$. A computation based on the chain rule gives the gradient of $g_\varepsilon$,
\begin{align*}
\nabla g_\varepsilon
\ =\ \nabla (\Phi^{-1})\, \nabla\pi\circ\Phi^{-1}\, \frac{1}{\varepsilon}\phi'(\varepsilon^{-1}\|\pi\circ\Phi^{-1}\|) \frac{\pi}{\|\pi\|}\circ\Phi^{-1},
\end{align*}
where $\nabla(\Phi^{-1})$ denotes the transpose of the Jacobian matrix of $\Phi^{-1}$, and similarly for $\nabla\pi$. Hence
\begin{equation} \label{eq:Tcap01100}
\|\nabla g_\varepsilon\| \ \le\ \frac{3}{\varepsilon} \|\nabla (\Phi^{-1})\|_{\rm op}\, \|\nabla\pi\|_{\rm op} \ \le\ \frac{3C}{\varepsilon},
\end{equation}
where $C=\sup_{x\in U}\|\nabla(\Phi^{-1})(x)\|_{\rm op}$ is finite by property~(iv) of Definition~\ref{D:tube}, and where we used that the projection~$\pi$ is $1$-Lipschitz. Write
\[
U_\varepsilon=\Phi(A\times B_\varepsilon).
\]
We then have $g_\varepsilon=0$ on $U\setminus U_\varepsilon$, which yields
\[
\|g_\varepsilon\|^2_{W^{1,2}(U,m)}=\int_U \left( |g_\varepsilon(x)|^2 + \|\nabla g_\varepsilon(x)\|^2\right) m(dx)
\le \left(1+ \frac{9C^2}{\varepsilon^2}\right) m(U_\varepsilon).
\]
Thus, it remains to show that $\lim_{\varepsilon\downarrow0}\varepsilon^{-2}m(U_\varepsilon)= 0$ holds. A change of variables yields
\[
m(U_\varepsilon) = \int_{U_\varepsilon} |\det x|^{\delta-1}dx = \int_{A\times B_\varepsilon} | \det \Phi(y,v)|^{\delta-1}J(y,v)dy \otimes dv,
\]
where $J=\det \nabla \Phi$ is the Jacobian determinant. Since $\Phi$ has bounded derivative, there is a constant $\kappa$ such that $\Phi$ is $\kappa$-Lipschitz and $J\le \kappa$ holds. Together with Lemma~\ref{L:detlr} (and the fact that $\Phi(y,0)\in M$), we get
\begin{align}
m(U_\varepsilon)
&\le \kappa \int_{A\times B_\varepsilon} \left| \det \left(\Phi(y,0) + \Phi(y,v)-\Phi(y,0)\right)\right|^{\delta-1}dy \otimes dv \nonumber\\
&\le \kappa \int_{A\times B_\varepsilon} c_k\circ\Phi(y,0) \left\| \Phi(y,v)-\Phi(y,0)\right\|^{(d-k)(\delta-1)}dy \otimes dv \nonumber\\
&\le \kappa^2 \int_{A\times B_\varepsilon} c_k\circ\Phi^{-1}(y,0)\| v \|^{(d-k)(\delta-1)}dy \otimes dv \nonumber\\
&= \kappa^2 \int_A c_k\circ\Phi^{-1}(y,0)dy \int_{B_\varepsilon}\| v \|^{(d-k)(\delta-1)}dv, \label{eq:Tcap0145}
\end{align}
where $c_k(\cdot)$ is as in Lemma~\ref{L:detlr}. The integral over $A$ is finite due to the boundedness of $\Phi^{-1}$ on $A\times\{0\}$ and the Lipschitz continuity of $c_k$ on $U$, so we get
\[
m(U_\varepsilon) \le C \int_{B_\varepsilon} \|v\|^{(d-k)(\delta-1)}dv = \varepsilon^{-(d-k)(1-\delta)+n_2} C \int_{B_1} \|v\|^{-(d-k)(1-\delta)} dv
\]
for some constant $C>0$ that does not depend on $\varepsilon$. Since the integral is over $n_2$-dimensional space, the right side is finite provided
\begin{equation} \label{eq:cap01}
(d-k)(1-\delta)<n_2-2
\end{equation}
holds. But $n_2=(d-k)^2$, so \eqref{eq:cap01} is equivalent to $(d-k)(d-k-1+\delta)>2$, which holds for all $k\le d-2$ since $\delta>0$. We conclude that there is a constant $C'>0$, independent of $\varepsilon$, such that
\[
\frac{1}{\varepsilon^2} m(U_\varepsilon) \le C' \varepsilon^{-(d-k)(1-\delta)+n_2-2}.
\]
Since, as we just saw, \eqref{eq:cap01} holds, this quantity tends to zero as $\varepsilon$ tends to zero. This finishes the proof of the case $\delta>0$, $k\in\{0,\ldots,d-2\}$.

If $\delta>2$, then \eqref{eq:cap01} holds also for $k=d-1$, which takes care of this case as well. The only case that remains to consider is $\delta=2$, $k=d-1$. This is done by a slight modification of the above argument. First, $g_\varepsilon$ is now given by
\[
g_\varepsilon = \phi_\varepsilon( |\pi\circ\Phi^{-1}| ),
\]
where $\phi_\varepsilon$ is the function from Lemma~\ref{L:d2}. (Note that $n_2=(d-k)^2=1$, so that $\pi\circ\Phi^{-1}(x)$ is a real number; hence the absolute value bars.) Since $\phi_\varepsilon$ is Lipschitz it is almost everywhere differentiable by Rademacher's theorem. Hence $\nabla g_\varepsilon$ is well-defined up to a nullset. Next, instead of~\eqref{eq:Tcap01100} we need a more precise estimate. Specifically, we have the inequality
\[
\|\nabla g_\varepsilon \|
\le C\left| \phi_\varepsilon'( |\pi\circ\Phi^{-1}| ) \right|,
\]
where as before $C=\sup_{x\in U}\|\nabla(\Phi^{-1})(x)\|_{\rm op}$ is finite. In particular this gives $g_\varepsilon\in W^{1,2}(U,m)$. By the same calculations as those leading up to~\eqref{eq:Tcap0145} we then obtain, using Lemma~\ref{L:detlr},
\begin{align*}
\int_U \|\nabla g_\varepsilon(x)\|^2m(dx)
&\le C^2 \int_U \left| \phi_\varepsilon'( |\pi\circ\Phi^{-1}(x)| ) \right|^2 |\det x|\, dx \\
&= C^2 \int_{A\times B_1} \left| \phi_\varepsilon'( |v| ) \right|^2 |\det \Phi(y,v)| J(y,v)\, dy\otimes dv \\
&\le \kappa^2 C^2 \int_A c_{d-1}\circ\Phi(y,0)dy\ \int_{-\varepsilon}^\varepsilon \left| \phi_\varepsilon'( |v| ) \right|^2 |v| dv.
\end{align*}
By the property~\eqref{eq:Ld2} of $\phi_\varepsilon$ given in Lemma~\ref{L:d2}, the right side tends to zero as $\varepsilon\downarrow 0$. This concludes the proof.
\end{proof}

\section{The Muckenhoupt $A_p$ property} \label{S:A1}

Weight functions satisfying the so-called Muckenhoupt $A_p$ condition play an important role in potential theory, where they arise as precisely those weight functions for which the Hardy-Littlewood maximal operator is bounded on the corresponding weighted $L^p$~space, $1<p<\infty$, see \cite{Muckenhoupt:1972}. This and related results have far-reaching consequences, some of which are discussed in~\cite{Torchinsky:1986fk,Turesson:2000,Kilpelainen:1994}. In this section we prove that the weight function $w(x)=|\det x|^\alpha$ lies in the Muckenhoupt~$A_p$ class for certain combinations of $p$ and $\alpha$. Our result generalizes the case $d=1$, for which the result is known, in a striking way. We let $|A|=\int_A dx$ denote the Lebesgue measure of a measurable subset $A\subset \M^d$.

\begin{theorem}[Muckenhoupt property] \label{T:Ap}
Let $\alpha\in\R$ and define $w(x) = |\det x|^\alpha$.
\begin{enumerate}
\item If $-1<\alpha\le 0$, then $w$ lies in the \emph{Muckenhoupt $A_1$ class}. That is, there is a constant $C>0$ depending only on $d$ and $\alpha$, such that
\begin{equation} \label{eq:A1}
\frac{1}{|B|} \int_B w(x) dx  \le C \inf_{x\in B} w(x)
\end{equation}
for every ball $B\subset\M^d$.

\item If $-1<\alpha<p-1$, $p>1$, then $w$ lies in the \emph{Muckenhoupt $A_p$ class}. That is, there is a constant $C>0$ depending only on $d$, $\alpha$ and $p$, such that
\[
\left( \frac{1}{|B|} \int_B w(x) dx \right) \left( \frac{1}{|B|} \int_B w(x)^{-1/(p-1)} dx\right)^{p-1} \le C
\]
for every ball $B\subset\M^d$.
\end{enumerate}
\end{theorem}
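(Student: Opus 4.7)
The plan is to establish part~(i) directly and then derive part~(ii) from it via a standard $A_p$ factorization argument. For~(ii), given $\alpha\in(-1,p-1)$, I will write $\alpha=\alpha_1-(p-1)\alpha_2$ with $\alpha_1,\alpha_2\in(-1,0]$ (for instance $\alpha_1=\alpha,\alpha_2=0$ when $\alpha\le 0$, and $\alpha_1=0,\alpha_2=-\alpha/(p-1)$ when $\alpha>0$). Part~(i) will then imply that $w_i:=|\det x|^{\alpha_i}$ lies in $A_1$ for each~$i$, and a direct calculation using H\"older's inequality in each factor of the $A_p$ expression followed by the $A_1$ bounds for $w_1,w_2$ yields $|\det x|^\alpha=w_1\,w_2^{1-p}\in A_p$, with an explicit constant in terms of the $A_1$ constants of $w_1$ and $w_2$.

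For part~(i), I fix a ball $B\subset\M^d$ and set $M:=\sup_{y\in B}|\det y|$. Since $\inf_{y\in B}|\det y|^\alpha=M^\alpha$ when $\alpha\le 0$, the $A_1$ inequality reduces to $\int_B|\det y|^\alpha\,dy\le C(d,\alpha)\,|B|\,M^\alpha$. Using the layer-cake formula,
\[
\int_B|\det y|^\alpha\,dy=|\alpha|\int_0^\infty F(s)\,s^{\alpha-1}\,ds,\qquad F(s):=\bigl|\{y\in B:|\det y|<s\}\bigr|,
\]
so the desired bound follows by elementary integration in~$s$ (which is finite precisely when $\alpha>-1$) from the sharp sublevel estimate
\[
F(s)\le C\,|B|\,\min(1,\,s/M),\qquad s>0.
\]

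The key technical step is thus this sublevel estimate, which is strictly stronger than the generic Ricci--Stein bound $F(s)\lesssim|B|(s/M)^{1/d}$ for polynomials of degree~$d$; the linear-in-$s$ behavior reflects that $\det$ vanishes transversally on the smooth stratum $M_{d-1}\subset\partial E$. I plan to prove it using the stratification $\partial E=\bigcup_{k=0}^{d-1}M_k$ together with the tube segment coordinates of Appendix~\ref{A:tube}. In such coordinates around a point of $M_{d-1}$, a transversal coordinate $v\in\R$ satisfies $|\det|\asymp|v|$ (the upper bound is Lemma~\ref{L:detlr} with $k=d-1$; the matching lower bound uses that $\adj\ne 0$ on $M_{d-1}$), so Fubini in the transversal variable gives the linear bound $\lesssim|B|\,s/M$ for the portion of $B$ near $M_{d-1}$. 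Near $M_k$ for $k\le d-2$, Lemma~\ref{L:detlr} yields $|\det|\lesssim\|v\|^{d-k}$ on tubes of radius $\|v\|$; together with the high codimension $(d-k)^2\ge 4$, this gives a sublevel contribution of order $s^{d-k}\le s^2$, which is absorbed by the main term for $s\le M$. Pasting these local bounds together via a partition of unity yields the global sublevel estimate.

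The main obstacle will be making the stratified analysis genuinely uniform in~$B$. The scaling invariance of the $A_1$ condition (both $|\det(tx)|^\alpha$ and $|B|$ scale predictably under $x\mapsto tx$) allows one to normalize the radius of~$B$, but one still has to handle qualitatively different geometries: balls deep inside $E^o$, balls touching only $M_{d-1}$, and balls that straddle several strata or contain high-codimension singular points. The estimate near $M_{d-1}$ should follow cleanly from the tube segment geometry, and the bound away from~$\partial E$ is routine; the delicate balance lies in the transition regions between strata, which is where I expect most of the work to reside.
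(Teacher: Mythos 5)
Your derivation of part~(ii) from part~(i) via the factorization $w=w_1\,w_2^{1-p}$ with $w_1,w_2\in A_1$ is a standard argument and is fine (and is essentially what the paper's cited reference accomplishes). For part~(i), the layer-cake reduction of the $A_1$ inequality to the sublevel estimate $F(s)\le C\,|B|\min(1,s/M)$ is valid, and the estimate itself is plausible and correctly reflects the transversal vanishing of $\det$ along $M_{d-1}$. But note that it is strictly stronger than the $A_1$ bounds it implies --- from the paper's route, the $A_1$ constant blows up as $\alpha\downarrow-1$, so one cannot reverse the implication by a Chebyshev argument and a limit --- and you never actually prove it. The sketch leans on the tube segments of Appendix~\ref{A:tube}, but Proposition~\ref{P:tube} is purely qualitative: it provides a tube of \emph{some} width $\varepsilon>0$ around each point of $M_k$, with no control on $\varepsilon$, on $\|\nabla\Phi^{\pm1}\|$, or on the locally Lipschitz function $c_k(\cdot)$ of Lemma~\ref{L:detlr}, all of which degenerate as the base point approaches the frontier $\overline{M_k}\setminus M_k$. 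A partition-of-unity patching of these local bounds would therefore have to produce an estimate uniform over all balls $B$ while the charts it uses are not uniform, which is exactly the ``transition between strata'' difficulty you flag and leave open.

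In the paper this is precisely the role of Lemmas~\ref{L:A12} and~\ref{L:A13}: working in the global $QR$ coordinates supplied by Lemma~\ref{L:c-o-v}, they sandwich $B_*(\Sigma,r)$ between sets $U\cdot K$ whose $A_1$-type inequality is elementary (Lemma~\ref{L:A11}), and Lemma~\ref{L:A13} reduces an arbitrary singular-value profile to one with ``large-or-zero'' entries, handling the stratum transitions quantitatively and with constants explicit in $d$. As written, your proposal identifies a reasonable target estimate and correctly locates the difficulty, but the key quantitative step --- a uniform sublevel set bound across all scales and all positions relative to the stratification --- remains a gap.
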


Once part~(i) has been proved, part~(ii) follows directly from \cite[Proposition~IX.4.3]{Torchinsky:1986fk}. It thus suffices to prove part~(i), which will occupy the rest of this section. We first introduce some notation. Let ${\bf D}^d_+$ denote the set of diagonal matrices with nonnegative and ordered diagonal elements,
\[
{\bf D}^d_+ = \{\Diag(\sigma) : \sigma \in \R^d,\ \sigma_1\ge\sigma_2\ge\dots\ge\sigma_d\ge 0\}.
\]
The open ball centered at $x\in\M^d$ with radius $r>0$ is denoted by $B(x,r)$. Its intersection with the nonsingular matrices is denoted by $B_*(x,r)$. That is,
\[
B(x,r)=\{y\in\M^d:\|x-y\|<r\}, \qquad B_*(x,r) = \{y\in B(x,r): \det y \ne 0\}.
\]
The proof of the Muckenhoupt property is somewhat involved (but nonetheless mostly elementary), due to the relatively complicated geometric structure of the set $\partial E$, which is where the weight function becomes singular. The main idea is to change variables using the $QR$-decomposition and integrate over the product space $O(d)\times T(d)$ instead of $\M^d$. Unfortunately, balls in $\M^d$ do not always map to balls (or comparable shapes) in $O(d)\times T(d)$, and this is where the main complications arise. The resolution to this issue resides in Lemma~\ref{L:A12} below, which relies on a detailed analysis of the mapping taking $x$ to its $QR$-decomposition.

We start with a lemma that establishes an inequality similar to~\eqref{eq:A1}, where the balls $B$ are replaced by sets of the form $U\cdot K=\{QR:Q\in U, R\in K\}$, with $U\subset O(d)$ measurable and $K\subset T(d)$ a cube.

\begin{lemma} \label{L:A11}
Let $-1<\alpha\le 0$. Then there is a constant $C_1>0$, depending only on $d$ and $\alpha$, such that the inequality
\[
\int_{U\cdot K} w(x) dx \le C_1|U\cdot K| \inf_{x\in U\cdot K} w(x)
\]
holds for any measurable subset $U\subset O(d)$ and any cube $K\subset T(d)$.
\end{lemma}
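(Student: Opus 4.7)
The plan is to push the integrals from $\M^d$ onto the product space $O(d)\times T(d)$ via the $QR$-decomposition, reducing everything to a product of one-dimensional integrals over the diagonal entries, and then verify the required inequality coordinate by coordinate.

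\textbf{Step 1 (change of variables).} Apply Lemma~\ref{L:c-o-v} with the test function $g(x)=f(x)|\det x|^d$. Since $|\det(QR)|=\prod_{i=1}^d R_{ii}$ for $Q\in O(d)$ and $R\in T(d)$, we obtain, for every nonnegative measurable $f$,
\[
\int_{\M^d} f(x)\,dx \;=\; \int_{O(d)\times T(d)} f(QR)\,\mu(dQ)\,\prod_{i=1}^d R_{ii}^{\,d-i}\,dR.
\]
Writing $K=\prod_{i\le j}I_{ij}$ with $I_{ii}=(a_i,b_i)\subset(0,\infty)$ and $I_{ij}\subset\R$ for $i<j$, and using uniqueness of the $QR$-decomposition with positive diagonal (so the map $U\times K\to U\cdot K$ is a bijection and $|\det x|$ pulled back depends only on the diagonal of $R$), this yields
\[
\int_{U\cdot K} w(x)\,dx = \mu(U)\,\Big(\prod_{i<j}|I_{ij}|\Big)\prod_{i=1}^d\int_{a_i}^{b_i}R^{\alpha+d-i}\,dR
\]
and the analogous expression for $|U\cdot K|$ with $\alpha$ replaced by $0$.

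\textbf{Step 2 (infimum).} Because $\alpha\le 0$ and $|\det(QR)|=\prod R_{ii}$,
\[
\inf_{x\in U\cdot K} w(x)=\Big(\sup_{(Q,R)\in U\times K}|\det(QR)|\Big)^{\alpha}=\prod_{i=1}^d b_i^{\,\alpha}.
\]
Cancelling the common factor $\mu(U)\prod_{i<j}|I_{ij}|$, the lemma reduces to the coordinatewise inequalities
\[
\int_{a}^{b} R^{\alpha+k}\,dR\;\le\; C\,b^{\alpha}\int_a^b R^k\,dR,\qquad 0<a<b,\ k\in\{0,1,\dots,d-1\},
\]
with $C$ depending only on $d$ and $\alpha$; the product of $d$ such inequalities gives the lemma with $C_1=C^d$.

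\textbf{Step 3 (one-dimensional estimate).} Set $t=a/b\in[0,1)$. After integrating, the target becomes
\[
\frac{1-t^{\alpha+k+1}}{\alpha+k+1}\;\le\; C\cdot\frac{1-t^{k+1}}{k+1}.
\]
Since $-1<\alpha\le 0$ we have $0<\alpha+k+1\le k+1$, so $t^{\alpha+k+1}\ge t^{k+1}$ and hence $\tfrac{1-t^{\alpha+k+1}}{1-t^{k+1}}\le 1$. Taking $C=\tfrac{k+1}{\alpha+k+1}\le\tfrac{1}{\alpha+1}$ works, so $C_1=(\alpha+1)^{-d}$ does the job.

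\textbf{Main obstacle.} There is no deep obstacle; once the $QR$-change-of-variables is in place the argument is elementary. The one slightly delicate point is the identification of $\inf_{x\in U\cdot K}w(x)$ with $\prod b_i^\alpha$, which uses both the sign condition $\alpha\le 0$ and the uniqueness of the $QR$-decomposition with positive diagonal. The real difficulty of the Muckenhoupt property will only appear in the next step, where these ``$QR$-box'' sets $U\cdot K$ must be related to Euclidean balls in $\M^d$.
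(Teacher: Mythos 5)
Your proposal is correct and follows essentially the same route as the paper: change variables via Lemma~\ref{L:c-o-v}, factor the integrals over $U\cdot K$ into a product of one-dimensional integrals over the diagonal intervals $I_{ii}$, identify $\inf_{U\cdot K}w$ with $\prod_i b_i^\alpha$ using $\alpha\le 0$, and conclude by a one-dimensional power-weight inequality. The paper packages the scalar step as a ``Claim'' and factors out $b^\alpha$ directly (using $(a/b)^\alpha\ge 1$), whereas you normalize by $b^{\alpha+k+1}$ and use $t^{\alpha+k+1}\ge t^{k+1}$; these are algebraically the same estimate, and both yield the same per-coordinate constant $(k+1)/(\alpha+k+1)$, of which your uniform bound $C_1=(\alpha+1)^{-d}$ is a valid (slightly coarser) choice.
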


\begin{proof}
Pick a cube $K=\{R\in T(d): R_{ij}\in I_{ij}, \ i\le j\}$, where the $I_{ij}$ are bounded intervals, and let $U\subset O(d)$ be measurable. By Lemma~\ref{L:c-o-v} we have
\[
\int_{U\cdot K} w(x)dx = \mu(U) \int_K \prod_{i=1}^d R_{ii}^{d-i+\alpha}dR = \mu(U) \prod_{i<j}|I_{ij}| \prod_{i=1}^d \int_{I_{ii}} t^{d-i+\alpha}dt,
\]
and similarly $|U\cdot K|=\mu(U)\prod_{i<j}|I_{ij}| \prod_{i=1}^d\int_{I_{ii}} t^{d-i}dt$. We also have
\[
\inf_{x\in U\cdot K}w(x) = \inf_{R\in K} \prod_{i=1}^dR_{ii}^\alpha=\prod_{i=1}^d\inf_{t\in I_{ii}}t^\alpha.
\]
Hence the result follows from the following Claim:

{\it Let $\alpha\in(-1,0]$ and $\beta\ge 0$. Then there is a constant $C_{\alpha,\beta}$ such that for every bounded interval $I\subset(0,\infty)$, we have}
\[
\int_I t^{\alpha+\beta}dt \le C_{\alpha,\beta} \int_I t^\beta dt  \  \inf_{t\in I} t^\alpha.
\]

To prove the Claim it suffices to consider $I=(a,b)$ with $0\le a<b$. We obtain:
\begin{align*}
\int_I t^{\alpha+\beta}dt &= \frac{1}{\alpha+\beta+1}\left(b^{\beta+1}-\left(\frac{a}{b}\right)^\alpha a^{\beta+1}\right) b^\alpha \\
&\le \frac{1}{\alpha+\beta+1} \left( b^{\beta+1}-a^{\beta+1}\right) b^\alpha \\
&= \frac{\beta+1}{\alpha+\beta+1} \int_I t^\beta dt \ \inf_{t\in I} t^\alpha,
\end{align*}
as required.
\end{proof}

Consider now balls $B(\Sigma,r)$, where the diagonal elements of $\Sigma\in{\bf D}^d_+$ are either ``large'' (comparable to the radius $r$) or zero. The following result reduces the proof that~\eqref{eq:A1} holds for balls of this form to an application of Lemma~\ref{L:A11}. In the statement of condition~\eqref{eq:Sig} below, we use the convention that $\sigma_0=\infty$ and that $i$ runs over $\{0,\ldots,d\}$.

\begin{lemma} \label{L:A12}
Suppose $\Sigma\in {\bf D}^d_+$ and $r>0$ satisfy the following property, where $\sigma\in\R^d$ is the vector of diagonal elements of $\Sigma$:
\begin{equation}\label{eq:Sig}
\begin{array}{l}
\text{There is an index } n\in\{0,1,\ldots,d\} \text{ such that }   \\
\sigma_i> 18d r \text{ for all } i\le n \text{, and } \sigma_i=0 \text{ for all } i> n.
\end{array}
\end{equation}
Then there is a measurable subset $U\subset O(d)$ and a cube $K\subset T(d)$ such that
\[
B_*(\Sigma,r) \subset U\cdot K \subset B_*(\Sigma, C_2r),
\]
where $C_2$ is a positive constant that only depends on $d$.
\end{lemma}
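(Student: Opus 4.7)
The plan is to parametrize $B_*(\Sigma,r)$ explicitly via the $QR$-decomposition. The intuition behind~\eqref{eq:Sig} is that when $\sigma_1,\ldots,\sigma_n$ are all much larger than~$r$, any $x\in B_*(\Sigma,r)$ admits a $QR$-factorization $x=QR$ in which the first $n$ columns of $Q$ are close to $e_1,\ldots,e_n$, the upper-left $n\times n$ part of $R$ is close to $\Diag(\sigma_1,\ldots,\sigma_n)$, all remaining entries of~$R$ are $O(r)$, and the last $d-n$ columns of $Q$ are otherwise unconstrained (beyond orthonormality).

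Concretely, for constants $c_1,c_2>0$ depending only on~$d$, I would define
\[
U = \bigl\{Q\in O(d) : \|Qe_i - e_i\|\le c_1 r/\sigma_i \text{ for all } i\le n\bigr\},
\]
a measurable subset of $O(d)$, and let $K=\prod_{i\le j}I_{ij}\subset T(d)$ be the cube with $I_{ii}=[\sigma_i-c_2r,\sigma_i+c_2r]$ for $i\le n$, $I_{ii}=(0,c_2r]$ for $i>n$, and $I_{ij}=[-c_2r,c_2r]$ for $i<j$; as soon as $c_2<18d$, hypothesis~\eqref{eq:Sig} ensures $I_{ii}\subset(0,\infty)$. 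For the forward inclusion, given $x\in B_*(\Sigma,r)$ with columns $x_j=\sigma_je_j+\eta_j$, $\|\eta_j\|\le r$, one runs Gram-Schmidt on the columns $x_1,\ldots,x_d$ and shows inductively for $j=1,\ldots,n$ that $|R_{ij}|=O(r)$ for $i<j$, $|R_{jj}-\sigma_j|=O(r)$, and $\|q_j-e_j\|\le c_1 r/\sigma_j$. The key identity is
\[
R_{ij} = \langle x_j,q_i\rangle = \sigma_j\langle e_j, q_i-e_i\rangle + \langle \eta_j,q_i\rangle, \qquad i<j,
\]
which together with $\sigma_j\le\sigma_i$ (for $i<j\le n$) and the inductive bound on $\|q_i-e_i\|$ yields $|R_{ij}|=O(r)$; then $u_j = x_j - \sum_{i<j}R_{ij}q_i$ satisfies $\|u_j-\sigma_je_j\|=O(r)$, and the hypothesis $\sigma_j>18dr$ leaves enough slack so that $R_{jj}=\|u_j\|>0$ and $\|q_j-e_j\|\le 2\|u_j-\sigma_je_j\|/R_{jj}=O(r/\sigma_j)$. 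For $j>n$, the bound $\|x_j\|\le r$ gives $|R_{ij}|\le r$ directly for all $i\le j$, while $R_{jj}>0$ follows from the nonsingularity of~$x$.

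For the reverse inclusion, any $Q\in U$ and $R\in K$ produce a nonsingular $QR$, and column by column
\[
(QR-\Sigma)e_j = (Re_j-\sigma_je_j) + \sum_{i\le j}R_{ij}(Qe_i-e_i).
\]
The first term has norm at most $\sqrt{j}\,c_2r$. The second is estimated by splitting $i\le n$ (using $|R_{ij}|\le \sigma_i+c_2r$, $\|Qe_i-e_i\|\le c_1 r/\sigma_i$, together with $\sigma_j\le\sigma_i$ when $j\le n$) and $i>n$ (which forces $j>n$ and $\sigma_j=0$); either way each summand contributes $O(r)$, with the ratio $r/\sigma_i<1/(18d)$ keeping everything bounded. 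Summing across columns gives $\|QR-\Sigma\|\le C_2 r$ with $C_2$ depending only on $d$. The main obstacle is the Gram-Schmidt induction in the forward step: one must pick $c_1,c_2$ so that the inductive bound on $\|q_i-e_i\|$ propagates to $j$ without blowing up in~$d$, and the factor $18d$ in~\eqref{eq:Sig} is precisely the slack needed to absorb the error $\sim dr$ accumulated over the $n\le d$ Gram-Schmidt steps.
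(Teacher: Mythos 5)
Your overall strategy is the same as the paper's: parametrize $B_*(\Sigma,r)$ via the $QR$-decomposition, define $U$ as a set of near-identity orthogonal matrices and $K$ as a cube in $T(d)$ centered near $\Sigma$, prove the forward inclusion via a Gram--Schmidt induction, and prove the reverse inclusion by a column-wise triangle estimate. The reverse inclusion sketch is sound. However, the forward-inclusion induction as you have set it up does not close.

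The problem is that your inductive hypothesis $\|q_i-e_i\|\le c_1 r/\sigma_i$ feeds back into the bound on $R_{ij}$ multiplied by $\sigma_j$: your key identity gives $|R_{ij}|\le \sigma_j\|q_i-e_i\|+r\le (c_1+1)r$, so $\|u_j-\sigma_je_j\|\le r+\sum_{i<j}|R_{ij}|\le \bigl(1+(j-1)(c_1+1)\bigr)r$, and hence $\|q_j-e_j\|\lesssim (j-1)(c_1+1)\,r/\sigma_j$. For this to be at most $c_1 r/\sigma_j$ one would need $c_1\gtrsim (j-1)(c_1+1)$, which is false already for $j=2$ (regardless of how $c_1$ depends on $d$). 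If you allow the constant to grow with $j$, say $\|q_j-e_j\|\le c^{(j)}r/\sigma_j$, the recursion $c^{(j)}\approx 2\bigl(j+\sum_{i<j}c^{(i)}\bigr)$ forces $c^{(j)}$ to grow exponentially in $j$, which the linear slack $\sigma_j/r>18d$ cannot absorb; so the ``error $\sim dr$'' in your closing remark is not what your estimates actually produce.

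The paper escapes this by running the induction on the $R$-entries directly and never feeding $\|q_i-e_i\|$ back into it. Concretely, one writes $R_{jk}=\langle u_j,x_k\rangle/|u_j|$ with
\[
\langle u_j,x_k\rangle=\langle x_j,x_k\rangle-\sum_{i<j}R_{ij}R_{ik},
\]
and uses $|\langle x_j,x_k\rangle|\le \sigma_j r+\sigma_k r+r^2$ (which exploits $\langle e_j,e_k\rangle=0$ to kill the would-be $\sigma_j\sigma_k$ term). Under the inductive hypothesis $|R_{ik}|<3r$ the cross terms contribute only $O(jr^2)$, and after dividing by $|u_j|\gtrsim\sigma_j$ and using $\sigma_k\le\sigma_j$ and $\sigma_j>18dr$, one gets $|R_{jk}|<r\,\frac{2+O(1/d)\cdot O(d)}{1-O(1/d)\cdot O(d)}<3r$, a bound with a fixed constant. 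The estimate $\|q_j-e_j\|\lesssim (d\,r)/\sigma_j$ then comes out \emph{after} the $R$-induction has closed, and its $d$-dependent constant is harmless because it only feeds into the definition of $U$, not back into the induction. That decoupling is the missing idea in your argument. (A separate, minor issue: your side condition $c_2<18d$ is in tension with needing $c_2$ large enough for the forward inclusion; this is easily repaired by defining $K$ as the intersection of the cube with $T(d)$, as the paper does, but it is worth flagging.)
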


\begin{proof}
The problem of finding the advertised constant $C_2$ can be reduced to proving the following Claim, where $e_1,\ldots,e_d$ denote the canonical unit vectors in $\R^d$:

{\it There is a constant $C_3$, depending only on $d$, such that the following holds: For any $x\in B_*(\Sigma,r)$, let $x=QR$ be its $QR$-decomposition, and let $q_1,\ldots,q_d$ be the columns of $Q$. Then the inequalities $\|R - \Sigma\|< C_3 r$ and $|q_i-e_i| < r\sigma_i^{-1}C_3$ hold for all $i\in\{1,\ldots,n\}$, where $n$ is the index from condition~\eqref{eq:Sig}.}

Let us show how the statement of the lemma follows from this claim. Define $K$ to be the cube in $T(d)$ centered at $\Sigma$ with side $2C_3r$, i.e.
\[
K=\{R\in T(d) : \max_{i,j}|R_{ij}-\Sigma_{ij}| < C_3r\},
\]
and let $U\subset O(d)$ be given by
\[
U = \left\{ Q=(q_1,\ldots,q_d) \in O(d) :  |q_i - e_i| < \frac{r}{\sigma_i}C_3, i=1,\ldots,n\right\}.
\]
The Claim then directly implies $B_*(\Sigma,r)\subset U\cdot K$. We thus need to show that it also implies $U\cdot K\subset B_*(\Sigma,C_2r)$ for some constant $C_2>0$ that only depends on $d$. To this end, observe that for any $x=QR\in U\cdot K$ we have, by the triangle inequality, the rotation invariance of $\|\cdot\|$, and the definition of~$K$,
\begin{align*}
\| x - \Sigma \|
&\le \|Q(R-\Sigma)\| + \| (Q-\Id) \Sigma\| \\
&= \|R-\Sigma\| + \| (Q-\Id) \Sigma\| \\
&< \sqrt{d(d+1)/2} C_3 r + \| (Q-\Id) \Sigma\|.
\end{align*}
Furthermore, since $\sigma_i=0$ for $i>n$, we have $\| (Q-\Id) \Sigma\|^2=\sigma_1^2|q_1-e_1|^2+\dots\sigma_n^2|q_n-e_n|^2$. We then deduce from the Claim that $\|(Q-\Id)\Sigma\|<\sqrt{n} C_3 r$, and consequently
\[
\| x - \Sigma \| < C_2 r, \qquad\text{where}\qquad C_2 = \left( \sqrt{d(d+1)/2}  + \sqrt{d} \right) C_3.
\]

We are thus left with proving the Claim. Since it is vacuously true for $n=0$, we can assume $n\ge 1$. The proof relies on a rather careful analysis of the Gram-Schmidt orthogonalization procedure for obtaining the $QR$-decomposition of a generic matrix $x\in B_*(\Sigma,r)$, so we briefly recall this procedure. To improve readability, we temporarily (for this proof only) adopt the notation $\langle y,z\rangle = y^\top z$ for $y,z\in\R^d$. Fix $x\in B_*(\Sigma,r)$ and let $x_1,\ldots,x_d$ be the columns of $x$. To obtain the $QR$-decomposition of $x$, one defines
\[
u_1 = x_1, \qquad q_1 = \frac{u_1}{|u_1|},
\]
and, if $q_1,\ldots,q_{j-1}$ have been defined,
\begin{equation} \label{eq:GS2}
u_j = x_j - \sum_{i=1}^{j-1} \langle q_i,x_j\rangle q_i, \qquad q_j = \frac{u_j}{|u_j|}.
\end{equation}
The vectors $q_1,\ldots,q_d$ obtained in this way are the columns of $Q$, and $R$ is given by $R_{ij}=\langle q_i,x_j\rangle$, $i\le j$.

We now proceed with the proof of the Claim. Recall that $e_1,\ldots,e_d$ are the canonical unit vectors in $\R^d$. Since $x\in B_*(\Sigma,r)$, we have $x_i = \sigma_ie_i + h_i$, where $h_i$ is a vector in $\R^d$ with $|h_i|< r$. Also let $a=5+18d$ denote the constant appearing in condition~\eqref{eq:Sig}.

Fix $j\in\{1,\ldots,n\}$, and suppose we have proved the following:
\begin{equation}\label{eq:inda}
\text{For all }i\le j-1 \text{ and all }k>i, \quad |R_{ik}| < 3r.
\end{equation}
Then \eqref{eq:GS2} and the inequalities $|x_j|\ge \sigma_j-|h_j|\ge\sigma_j-r$ imply
\begin{equation} \label{eq:uj}
|u_j| \ge \sigma_j - r - \sum_{i=1}^{j-1} |R_{ij}| \ge \sigma_j - r (1+3(j-1)).
\end{equation}
Moreover, for $k>j$ we use \eqref{eq:GS2}, \eqref{eq:inda}, and the fact that $|\langle x_j, x_k\rangle| < \sigma_j r+ \sigma_k r + r^2$ to get
\[
|\langle u_j, x_k\rangle| \le |\langle x_j, x_k\rangle| + \sum_{i=1}^{j-1} |R_{ij}| \, |R_{ik}|
< \sigma_j r+ \sigma_k r + r^2(1 + 9(j-1)).
\]
Together with \eqref{eq:uj} this yields
\begin{align*}
|R_{jk}| =\frac{|\langle u_j, x_k\rangle|}{|u_j|}
&\le r \frac{\sigma_j + \sigma_k + r(1 + 9(j-1)) }{ \sigma_j - r(1 + 3(j-1)) } \\
&=  r \frac{1 + \sigma_k/\sigma_j + (r/\sigma_j)(1 + 9(j-1)) }{ 1 - (r/\sigma_j)(1 + 3(j-1)) } \\
&< r \frac{2 + a^{-1}(1 + 9(j-1)) }{ 1 - a^{-1}(1 + 3(j-1)) },
\end{align*}
where in the last step we used that $\sigma_k\le\sigma_j$ (since $k>j$) and $\sigma_j>ar$ (since $j\le n$). Since $a=18d$, the right side is at most $3r$, as one readily verifies. We deduce that~\eqref{eq:inda} holds with $j$ replaced by $j+1$, and since it is vacuously true for $j=1$ it follows by induction that it holds for all $j\in\{1,\ldots,n+1\}$.

We now use this result to bound $|R_{jj}-\sigma_j|$ for $j\in\{1,\ldots,n\}$. To this end, write
\[
|R_{jj} - \sigma_j| = \frac{1}{|u_j|} \left| |x_j|^2 - \sum_{i=1}^{j-1} \langle q_i,x_j\rangle^2 - \sigma_j |u_j| \right|
\le
\frac{ \left| |x_j|^2 - \sigma_j|u_j| \right| + 9(j-1)r^2}{|u_j|},
\]
using that $\langle q_i,x_j\rangle^2=|R_{ij}|^2<9r^2$ due to~\eqref{eq:inda}. Moreover, we have
\[
\left| |x_j|^2 - \sigma_j|u_j| \right| = \left| \sigma_j^2 - \sigma_j|u_j| + |h_j|^2 + 2\sigma_j\langle e_j,h_j\rangle \right|
\le  \sigma_j \left| \sigma_j - |u_j| \right| + r^2 + 2\sigma_j r,
\]
and by the reverse triangle inequality,
\begin{equation} \label{eq:revtr}
\left| \sigma_j - |u_j| \right| \le |\sigma_je_j-u_j| \le r + 3(j-1)r.
\end{equation}
Assembling the pieces and using the bound~\eqref{eq:uj} gives
\[
|R_{jj} - \sigma_j| \le \frac{\sigma_j r (1 + 3(j-1)) + r^2 + 2\sigma_j r + 9(j-1)r^2}{ \sigma_j - r(1+3(j-1)) }.
\]
Dividing the numerator and denominator by $\sigma_j$ and using that $\sigma_j>ar$, we finally arrive at
\begin{align*}
|R_{jj} - \sigma_j| &\le r \frac{ 1 + 3(j-1) + a^{-1} + 2  + a^{-1}9(j-1)}{ 1 - a^{-1}(1+3(j-1)) } \\
&= r \frac{ 3 + 3(j-1) + a^{-1}(1  +9(j-1))}{ 1 - a^{-1}(1+3(j-1)) } \\
&< r \frac{ 3 + 18d}{ 5 }.
\end{align*}
The only elements of $R$ that remain to analyze are $R_{ij}$ for $i\le j$ and $j>n$. But $x_j=h_j$ for these $j$, so $|R_{ij}|=|\langle q_i,x_j\rangle| \le |h_j|<r$. We are now able to estimate $\|R-\Sigma\|$ as follows:
\begin{align*}
\| R-\Sigma \|
& \le \sum_j |R_{jj}-\sigma_j| + \sum_{i< j} |R_{ij}| \\
& \le r \left( \frac{ 3 + 18d}{ 5 }\times d + 3\times \frac{n(n-1)}{2}+ d(d-n) \right).
\end{align*}
A bound solely in terms of $d$ is then easily obtained. For instance, we may take
\begin{equation}\label{eq:bdRS}
\| R-\Sigma \| \le r d \left( \frac{ 3 + 18d}{ 5 } + 3 \times\frac{(d-1)}{2}+ d \right).
\end{equation}

Let us now focus on bounding $|q_j-e_j|$, $j\in\{1,\ldots,n\}$. The calculations are similar to the ones used to bound $|R_{jj}-\sigma_j|$ above, but slightly simpler. We have
\begin{align*}
|q_j - e_j| &= \frac{1}{|u_j|} \left|  x_j - \sum_{i=1}^{j-1} \langle q_i,x_j\rangle q_i - |u_j|e_j \right| \\
&\le \frac{1}{|u_j|}\Big( \big| \sigma_j - |u_j| \big| + r + 3(j-1)r \Big) \\
&\le \frac{2}{|u_j|}\Big(r + 3(j-1)r \Big),
\end{align*}
using~\eqref{eq:revtr} in the last step. Using again~\eqref{eq:uj} together with $\sigma_j> ar$,
\[
|q_j - e_j| \le \frac{r}{\sigma_j} \times\frac{2+6(j-1)}{1-a^{-1}(1+3(j-1))} < \frac{r}{\sigma_j} \times11d.
\]
The Claim, and hence the lemma, is now proved, if for $C_3$ we take the maximum of $11d$ and the constant in~\eqref{eq:bdRS}.
\end{proof}

Next, Lemma~\ref{L:A13} below implies that the proof of \eqref{eq:A1} for any ball whose center lies in ${\bf D}^d_+$ reduces to an application of Lemma~\ref{L:A12}. It uses the following simple observation.

\begin{lemma} \label{L:A131}
Let $a>0$ and $k\in\{0,1,2,\ldots\}$. We have
\[
1 + a + a(1+a) + \cdots + a(1+a)^k = (1+a)^{k+1}.
\]
\end{lemma}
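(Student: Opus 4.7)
The identity is a short algebraic fact with no substantive obstacle, so the plan is essentially to pick the cleanest of three equivalent arguments. My preferred approach is to recognize the sum as telescoping: each term $a(1+a)^j$ equals $(1+a)^{j+1}-(1+a)^j$, since $(1+a)^{j+1}-(1+a)^j=(1+a)^j\bigl((1+a)-1\bigr)=a(1+a)^j$.

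Applying this to every term past the leading $1$, I would write
\[
1 + \sum_{j=0}^{k} a(1+a)^j \;=\; 1 + \sum_{j=0}^{k}\bigl[(1+a)^{j+1}-(1+a)^j\bigr] \;=\; 1 + (1+a)^{k+1} - 1 \;=\; (1+a)^{k+1},
\]
which is the claimed equality.

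As a sanity check, one could equivalently prove the statement by induction on $k$: the base case $k=0$ reads $1+a=(1+a)^1$, and the inductive step follows from $(1+a)^{k+1}+a(1+a)^{k+1}=(1+a)^{k+2}$. A third route is to use the geometric sum formula $\sum_{j=0}^{k}a(1+a)^j = a\cdot\frac{(1+a)^{k+1}-1}{a} = (1+a)^{k+1}-1$. All three routes are immediate; the telescoping presentation has the merit of requiring no separate case analysis and no invocation of a closed-form geometric sum, and is the one I would write up. There is no hard step.
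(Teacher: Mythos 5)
Your proof is correct. The paper proves this by induction on $k$, with the inductive step obtained by factoring out $(1+a)$ from the entire left-hand side: $1 + a + a(1+a) + \cdots + a(1+a)^k = (1+a)\bigl(1 + a + \cdots + a(1+a)^{k-1}\bigr)$, then applying the inductive hypothesis. Your primary argument instead decomposes each term $a(1+a)^j$ as the difference $(1+a)^{j+1}-(1+a)^j$ and lets the sum telescope, which dispenses with the induction scaffolding altogether and yields the identity in one line. The two arguments are essentially the same calculation organized differently (the telescoping is what makes the induction close), but the telescoping write-up is a bit more self-contained. Your sketched induction alternative also works, though note it adds the new term $a(1+a)^{k+1}$ to the previous total, whereas the paper's induction factors $(1+a)$ out of the whole sum; these are cosmetically different but equivalent. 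No gap in any of the three routes.
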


\begin{proof}
The result clearly holds for $k=0$. If it holds for $k-1$, we get
\[
1 + a + a(1+a) + \cdots + a(1+a)^k=(1 + a)(1 + a + \cdots + a(1+a)^{k-1})=(1+a)^{k+1},
\]
showing that it holds for $k$ as well.
\end{proof}

\begin{lemma} \label{L:A13}
Pick any $\Sigma=\Diag(\sigma) \in {\bf D}^d_+$ and $r>0$. There is a matrix $\Sigma'=\Diag(\sigma') \in {\bf D}^d_+$ and a real number $r'>0$ that satisfy the condition~\eqref{eq:Sig} (with $\sigma$ replaced by $\sigma'$, and $r$ by $r'$), such that
\[
B(\Sigma,r) \subset B(\Sigma',r') \qquad\text{and}\qquad r' \le (1+18d)^d r.
\]
\end{lemma}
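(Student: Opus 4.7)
The plan is to build $(\Sigma', r')$ from $(\Sigma, r)$ by a greedy iterative procedure that zeros out the diagonal entries of $\Sigma$ one at a time, starting from the smallest (index $d$) and working upward, enlarging the radius at each step to absorb the displacement. Set $a = 18d$ and let $\Sigma^{(0)} = \Sigma$, $r_0 = r$. Inductively, given $\Sigma^{(k)} = \Diag(\sigma^{(k)}) \in {\bf D}^d_+$ whose last $k$ diagonal entries vanish, let $m = d - k$. If $m = 0$ or $\sigma^{(k)}_m > a\,r_k$, stop; otherwise define $\Sigma^{(k+1)}$ by replacing the $m$-th diagonal entry of $\Sigma^{(k)}$ with zero, and set $r_{k+1} = r_k + \sigma^{(k)}_m$. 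The procedure terminates after at most $d$ steps at some index $k^* \le d$, and I then set $\sigma' = \sigma^{(k^*)}$, $r' = r_{k^*}$, and $n = d - k^*$.

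Two things need checking. First, the inclusion $B(\Sigma, r) \subset B(\Sigma', r')$ follows by telescoping the one-step inclusions $B(\Sigma^{(k)}, r_k) \subset B(\Sigma^{(k+1)}, r_{k+1})$, which hold by the triangle inequality since $\|\Sigma^{(k+1)} - \Sigma^{(k)}\| = \sigma^{(k)}_m = r_{k+1} - r_k$. Second, $(\Sigma', r')$ satisfies condition~\eqref{eq:Sig} with the chosen index $n$: by construction $\sigma'_i = 0$ for $i > n$; if $n = 0$ there is nothing else to verify, and otherwise the stopping rule reads $\sigma_n = \sigma^{(k^*)}_n > a\,r'$ (the first $n$ entries were never touched), so by the ordering $\sigma'_i = \sigma_i \ge \sigma_n > a\,r'$ for every $i \le n$.

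For the bound on $r'$, the non-terminal stopping rule gives $\sigma^{(k)}_m \le a\,r_k$, hence $r_{k+1} \le (1+a) r_k$, and consequently $r' = r_{k^*} \le (1+a)^{k^*} r \le (1+a)^d r = (1+18d)^d r$. The telescoped version $r' - r = \sum_{k=0}^{k^*-1} \sigma^{(k)}_m \le r\big((1+a)^d - 1\big)$ is exactly the identity asserted by Lemma~\ref{L:A131}, which is presumably why it was recorded just above.

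This is fundamentally a bookkeeping argument, and I do not anticipate a serious obstacle. The only delicate point is making sure that, after we commit to the final radius $r'$, the still-positive entries $\sigma_i$ ($i \le n$) continue to dominate $a\,r'$; but this is automatic, since $\sigma_n$ is the smallest of them and $r'$ is frozen at the precise moment when the stopping rule $\sigma_n > a\,r'$ is triggered.
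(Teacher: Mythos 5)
Your proof is correct. The core idea is the same as the paper's — zero out the small trailing diagonal entries and grow the radius geometrically, using the triangle inequality together with a geometric-series bound — but your bookkeeping is genuinely different in detail. The paper fixes the cutoff index $n$ \emph{a priori} as the largest $i$ with $\sigma_i > a(1+a)^{d-i}r$ (a threshold depending only on $i$) and then sets $r' = r(1+a)^{d-n}$ in one shot, invoking Lemma~\ref{L:A131} to telescope the displacement $\sigma_{n+1}+\dots+\sigma_d$. You instead run a greedy iteration, comparing each $\sigma_{d-k}$ against the \emph{current} radius $r_k$ and enlarging $r_k$ by the actual displacement rather than its upper bound. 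The two constructions can produce different $(\Sigma',r')$ — yours never yields a larger $r'$ and can retain a larger $n$ (e.g.\ for $d=2$, $\sigma=(500r,10r)$ the paper gives $n=0$, $r'=(1+a)^2 r$ while yours gives $n=1$, $r'=(1+a)r$) — but both satisfy condition~\eqref{eq:Sig} and the bound $r'\le(1+18d)^d r$, which is all the lemma requires. Your verification of the stopping condition (that $\sigma_n=\sigma^{(k^*)}_n>ar'$ because $r'$ is frozen the instant the test passes, and $\sigma_i\ge\sigma_n$ for $i\le n$ by monotonicity) is sound, as is the telescoped inclusion $B(\Sigma^{(k)},r_k)\subset B(\Sigma^{(k+1)},r_{k+1})$. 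One small remark: you don't strictly need Lemma~\ref{L:A131}, since the crude bound $r_{k+1}\le(1+a)r_k$ already gives $r'\le(1+a)^d r$; the paper's one-shot formula is what makes that identity load-bearing.
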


\begin{proof}
Let $a>0$ be a constant to be determined later. Suppose for some index $i\in\{1,\ldots,d\}$, we have $\sigma_i > a(1+a)^{d-i} r$. Let $n$ be the largest such index, and define
\[
\sigma' = (\sigma_1,\ldots,\sigma_n,0,\ldots,0), \qquad r' = r(1+a)^{d-n}.
\]
Then, since $\sigma_i \le  a(1+a)^{d-i}r$ for all $i>n$,
\begin{align*}
|\sigma - \sigma'| &\le \sigma_{n+1}+\dots+\sigma_d \\
&\le r\left( a(1+a)^{d-n-1}+\dots+a(1+a)+a \right) \\
&= r(1+a)^{d-n}-r \\
&= r' -r.
\end{align*}
The triangle inequality yields $B(\Sigma,r)\subset B(\Sigma',r')$, where $\Sigma'=\Diag(\sigma')$. Setting $a=18d$, we see that $\Sigma'$, $r'$ satisfy condition~\eqref{eq:Sig}.

It remains to consider the case where $\sigma_i \le a(1+a)^{d-i} r$ for all $i\in\{1,\ldots,d\}$. In this case any $x\in B(\Sigma,r)$ satisfies
\[
\|x\| \le r + \|\Sigma\| \le r + r\left( a(1+a)^{d-1}+\dots+a(1+a)+a \right) = r(1+a)^d,
\]
so that $B(\Sigma,r)\subset B(0,r(1+a)^d)$. With $r'= r(1+a)^d$ and $n=0$, condition~\eqref{eq:Sig} is again satisfied for $a=18d$. This finishes the proof.
\end{proof}

\begin{proof}[Proof of Theorem~\ref{T:Ap}{\rm (i)}]
The proof of \eqref{eq:A1} is now straightforward. Indeed, pick any ball $B=B(x,r)$, and let $x=U\Sigma V^\top$ be a singular value decomposition of $x$. Then $B(x,r)=U \cdot B(\Sigma,r)\cdot V^\top$, and together with the invariance of Lebesgue measure under orthogonal transformations and the fact that $\det y = \det(U^\top yV)$ for any $y\in\M^d$, this leads to the equalities
\begin{align*}
\int_{B(x,r)} w(x)dx &= \int_{B(\Sigma,r)} w(x)dx, \\[3mm]
|B(x,r)|&=|B(\Sigma,r)|, \\[4mm]
\inf_{x\in B(x,r)}w(x) &= \inf_{x\in B(\Sigma,r)}w(x).
\end{align*}
Consequently (and using that $\partial E$ is a nullset), it suffices to prove~\eqref{eq:A1} for $B$ replaced by $B_*=B_*(\Sigma,r)$ with $\Sigma\in{\bf D}^d_+$. We then have the following chain of inequalities, where we set $B'_*=B_*(\Sigma',r')$ with $\Sigma'$ and $r'$ from Lemma~\ref{L:A13}, and where $U$, $K$, and $C_2$ are obtained by applying Lemma~\ref{L:A12} to $\Sigma'$, $r'$.
\begin{align*}
\int_{B_*} w(x)dx &\le \int_{U\cdot K} w(x)dx 	& (B_* \subset B_*' \subset U\cdot K)\\
&\le C_1 |U\cdot K| \inf_{x\in U\cdot K} w(x)	& \text{(Lemma~\ref{L:A11})}\\
&\le C_1 |U\cdot K| \inf_{x\in B_*} w(x)		& (B_* \subset U\cdot K)\\
&\le C_1 |B_*(\Sigma', C_2 r')| \inf_{x\in B_*} w(x)	& (U\cdot K \subset B_*(\Sigma',C_2 r'))\\
&= C_1 C_2^{d^2}(1+18d)^{d^3} |B_*| \inf_{x\in B_*} w(x). & (r' \le (1+18d)^dr)
\end{align*}
This proves that~\eqref{eq:A1} holds with $C=C_1C_2^{d^2}(1+18d)^{d^3}$.
\end{proof}

\appendix

\section{Proof of the integration by parts formula} \label{A:ibp}

In this section we give a proof of the integration by parts formula, Theorem~\ref{T:ibp}, which we now restate for the reader's convenience:

{\it
Suppose $\delta>0$, and consider $f\in C^1_c(E)$ and $G\in C^1(E; \M^d)$. If $\delta\le 1$, assume that $G(x)$ is tangent to $\partial E$ at $x$ for all $x\in \partial E$. If $\delta<1$, assume in addition that $G(x)\bullet x^{-\top}$ is locally bounded. Then
\[
\langle \nabla f, G\rangle  =  \langle f, \nabla^*G\rangle.
\]
}

Throughout the proof, let $K$ be the (compact) support of $f$. For $\varepsilon\ge 0$, define
\[
U_\varepsilon = \{x\in \M^d: \det x >\varepsilon\} \qquad\text{and}\qquad \nu(x) = -\frac{\nabla \det(x)}{\|\nabla\det(x)\|}, \quad x\in U_0.
\]
For $\varepsilon>0$, $U_\varepsilon$ has smooth boundary with outward unit normal $\nu(x)$ at $x\in\partial U_\varepsilon$. For any smooth function $h:U_0\to\R$ such that the integrals are well-defined, the standard integration by parts formula yields, for each~$\varepsilon>0$,
\begin{align}\label{eq:ibppr00}
\int_{U_\varepsilon} \nabla f(x) \bullet G(x) h(x) dx = \int_{\partial U_\varepsilon} f(x)&h(x)G(x)\bullet \nu(x)d\sigma_\varepsilon(x) \nonumber \\ 
& - \int_{U_\varepsilon} f(x)\nabla\bullet (Gh)(x) dx,
\end{align}
where $\sigma_\varepsilon$ denotes the surface area measure on $\partial U_\varepsilon$.

{\it Case 1: $\delta>1$.}
Take $h(x)=\det(x)^{\delta-1}$. As $\varepsilon\downarrow0$, the left side of~\eqref{eq:ibppr00} tends to $\int_E\nabla f(x)\bullet G(x)m(dx)$ by dominated convergence. Let $C>0$ be such that $|f(x)|\|G(x)\|\le C$ for all $x\in K$. The absolute value of the boundary term is then dominated by
\[
C\varepsilon^{\delta-1} \int_{\partial U_\varepsilon \cap K}d\sigma_\varepsilon(x),
\]
using also that $h(x)=\varepsilon^{\delta-1}$ for $x\in\partial U_\varepsilon$. It is easy to see that $\sigma_\varepsilon(K)$ remains bounded as $\varepsilon\downarrow 0$, so we conclude that the boundary term vanishes in the limit. Consider now the second term on the right side of~\eqref{eq:ibppr00}. The product rule yields
\[
\nabla \bullet (Gh) = h \nabla\bullet G + G\bullet\nabla h.
\]
By dominated convergence, $\int_{U_\varepsilon}f(x)\nabla\bullet G(x)h(x)dx\to \int_Ef(x)\nabla\bullet G(x)m(dx)$. Moreover, we have $G(x)\bullet\nabla h(x)=(\delta-1)G(x)\bullet\nabla \det(x) \det(x)^{\delta-2}$. Since $\det(x)^{\delta-2}dx$ is a Radon measure due to Theorem~\ref{T:Radon} and the fact that $\delta>1$, we may again use dominated convergence to get
\[
\int_{U_\varepsilon} f(x)G(x)\bullet \nabla h(x)dx \to (\delta-1)\int_E f(x)G(x)\bullet x^{-\top} m(dx).
\]
(Here we used the equality $\nabla \det(x) \det(x)^{\delta-2}dx=x^{-\top}m(dx)$.) Assembling the pieces gives the desired formula~\eqref{eq:ibp}.

{\it Case 2: $\delta=1$.}
We again take $h(x)=\det(x)^{\delta-1}\equiv 1$. Except for the boundary term, everything works as in the case $\delta>1$, if we just note that $\nabla h=0$. Letting $C$ be a bound on $|f(x)|$ over $K$, the boundary term is bounded above by
\[
C \sigma_\varepsilon(K) \sup_{x\in U_\varepsilon\cap K} G(x)\bullet\nu(x).
\]
Using that $G(x)$ is tangent to $\partial E$ at every $x\in\partial E$ it is not hard to show that the supremum tends to zero. Hence~\eqref{eq:ibp} is established.

{\it Case 3: $\delta<1$.}
Things are now a bit more complicated due to the fact that $\det(x)^{\delta-1}$ blows up at $\partial E$. To get around this, for each $n$ let $\tau_n$ be a smooth, nondecreasing function satisfying the following properties:
\[
\tau_n(t)\le t\wedge n, \quad \tau_n(t)=t \text{ for } t\le n-1, \quad \tau_n(t)=n \text{ for } t\ge n+1
\]
\[
0\le \tau'_n \le 1, \qquad \tau_n(t) \uparrow t \text{ and } \tau_n'(t) \uparrow 1 \text{ as } n\to\infty.
\]
In~\eqref{eq:ibppr00} we now take $h=h_n$, where $h_n=\tau_n \circ w$ and $w(x)=\det(x)^{\delta-1}$. We first hold $n$ fixed and let $\varepsilon\downarrow 0$. The left side of \eqref{eq:ibppr00} converges to $\int_E \nabla f(x) \bullet G(x) h_n(x) dx$ by dominated convergence. The boundary term on the right side will vanish by the same argument as in the case $\delta=1$. The integrand in the second term on the right side is in fact bounded, since by the properties of $\tau_n$,
\[
\nabla h_n(x)=(\delta-1) (\tau_n'\circ w(x))\nabla\det(x) \det(x)^{\delta-2} \le (\delta-1)\nabla\det(x) (n+1)^{\frac{\delta-2}{\delta-1}}.
\]
Dominated convergence gives the limit $\int_Ef(x)\nabla\bullet(Gh_n)(x)dx$. Combining these results and applying the product rule gives the formula
\begin{equation} \label{eq:AppAeq1}
\int_E \nabla f(x) \bullet G(x) h_n(x) dx = -\int_E f(x)\nabla\bullet G(x)h_n(x) -\int_E f(x) G(x)\bullet \nabla h_n(x)dx.
\end{equation}
The final step is to send $n$ to infinity. The left side of~\eqref{eq:AppAeq1} converges to $\int_E \nabla f(x) \bullet G(x) m(dx)$ by dominated convergence, since $h_n=\tau_n\circ w \uparrow w$. For the first term on the right side of~\eqref{eq:AppAeq1}, we similarly have $\int_E f(x)\nabla\bullet G(x)h_n(x)(dx)\to \int_E f(x)\nabla\bullet G(x)m(dx)$. Finally, for the second term on the right side of~\eqref{eq:AppAeq1}, note that
\[
f(x)\,G(x)\bullet \nabla h_n(x) = (\delta-1) f(x) \tau_n'\circ w(x)\, G(x)\bullet x^{-\top}\, w(x).
\]
This is bounded in absolute value by a constant times $|G(x)\bullet x^{-\top}|\nv 1_K\, w(x)$, which is integrable since $|G(x)\bullet x^{-\top}|$ is locally bounded by hypothesis. Thus dominated convergence yields $\int_E f(x) \,G(x)\bullet \nabla h_n(x) dx \to (\delta-1)\int_E f(x) G(x)\bullet x^{-\top} w(x)dx$, and hence the result.

\section{The space $W^{1,2}(\Omega,m)$} \label{A:W12}

In this appendix we review some basic properties of the weighted Sobolev space $W^{1,2}(\Omega,m)$ introduced in Section~\ref{S:MU}, as well some related results. The material is not new---we collect the results here for ease of reference.

Besides $W^{1,2}(\Omega,m)$ there is occasionally a need to consider spaces $W^{1,2}(U,m)$ for open sets $U\subset \M^d$ different from $\Omega$. Here the following subtlety arises: If $U\cap\partial E\ne\emptyset$ and $\delta>1$, then $f\in L^2(\Omega,m)$ need not lie in $L^1_{\rm loc}(U)$, see \cite[Example~1.7]{Kufner/Opic:1984}. Thus we cannot speak about its distributional gradient. In this case we therefore define
\[
W^{1,2}(U ,m) = \{ f\in L^2(\Omega ,m)\cap L^1_{\rm loc}(U) :  Df \in L^2(\Omega ,m; \M^d)\cap L^1_{\rm loc}(U)\}.
\]
It is clear that for two open subsets $U$, $V$ satisfying $U\subset V$ and an element $f\in W^{1,2}(V,m)$, we have $f|_U\in W^{1,2}(U,m)$. To alleviate notation we simply write $f\in W^{1,2}(U,m)$. If the open set $U\subset \M^d$ has compact closure in $\Omega$, we have $C^{-1}\le (\det x)^{\delta-1}\le C$ for some constant $C>1$ and all $x\in U$. Hence
\begin{equation} \label{eq:normequiv}
\| \cdot \|_{W^{1,2}(U,m)} \text{ and } \| \cdot \|_{W^{1,2}(U,dx)} \text{ are equivalent},
\end{equation}
and the unweighted space $W^{1,2}(U,dx)$ coincides with $W^{1,2}(U,m)$. This has several useful consequences.

\begin{lemma} \label{L:W12props}
Let $f\in W^{1,2}(\Omega,m)$. The following statements hold.
\begin{enumerate}
\item Mollification: Let $\psi$ be a mollifier and set $\psi_\varepsilon(x)=\varepsilon^{-d}\psi(x/\varepsilon)$. If $f\in W^{1,2}(\Omega,m)$ has compact support in $\Omega$, then $\lim_{\varepsilon\to0}\psi_\varepsilon*f= f$ in $W^{1,2}(\Omega,m)$.
\item Stability under truncation: For $f\in W^{1,2}(\Omega,m)$ we have $\lim_{n\to\infty}f\wedge n= f$ in $W^{1,2}(\Omega,m)$.
\item Let $f,g\in W^{1,2}(\Omega,m)$ with $f$ and $g$ bounded. Then $fg\in W^{1,2}(\Omega,m)$ and we have
\begin{equation} \label{eq:Dfg}
D(fg) = fDg + gDf.
\end{equation}
\item The set $\{f\in W^{1,2}(\Omega,m): f \text{ is bounded with bounded support}\}$ is dense in $W^{1,2}(\Omega,m)$.
\end{enumerate}
\end{lemma}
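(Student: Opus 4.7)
The unifying observation is the local norm equivalence in~\eqref{eq:normequiv}: on any open $U$ with $\overline U\Subset\Omega$, the weight $|\det x|^{\delta-1}$ is bounded away from $0$ and $\infty$, so $W^{1,2}(U,m)$ coincides with the unweighted classical space $W^{1,2}(U,dx)$ with equivalent norms. Each part of the lemma then reduces to a standard Sobolev fact applied locally. I do not expect a genuinely deep obstacle; the main care is tracking which norm is in use and ensuring that the operations at hand preserve membership in $W^{1,2}(\Omega,m)$.

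For~(i), since $f$ has compact support $K\subset\Omega$, choose a bounded open $U$ with $K\Subset U\Subset\Omega$. For all sufficiently small $\varepsilon$, $\psi_\varepsilon*f$ is supported in $U$. Classical mollification gives $\psi_\varepsilon*f\to f$ and $D(\psi_\varepsilon*f)=\psi_\varepsilon*Df\to Df$ in $L^2(U,dx)$; by~\eqref{eq:normequiv} the same convergence holds in $W^{1,2}(U,m)$, and since both functions vanish outside $U$ it extends trivially to $W^{1,2}(\Omega,m)$.

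For~(ii), the Lipschitz chain rule for Sobolev functions, applied locally via~\eqref{eq:normequiv}, yields $D(f\wedge n)=\1{f<n}\,Df$ $m$-a.e. The pointwise inequalities $|f\wedge n|\le|f|$, $|f\wedge n-f|\le|f|\1{f>n}$, and $\|D(f\wedge n)-Df\|\le\|Df\|\1{f\ge n}$ then conclude the proof by dominated convergence with dominating functions $|f|\in L^2(m)$ and $\|Df\|\in L^2(m)$. Part~(iii) is similar in spirit: the classical product rule for bounded Sobolev functions holds on each relatively compact $U\subset\Omega$, so~\eqref{eq:Dfg} is the distributional gradient of $fg$ on $\Omega$; boundedness of $f,g$ and the assumption $Df,Dg\in L^2(\Omega,m;\M^d)$ immediately place $fg$ and $D(fg)$ in the required $L^2(m)$ spaces.

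Finally, for~(iv), given $f\in W^{1,2}(\Omega,m)$, first two-sidedly truncate by $f_n=(-n)\vee(f\wedge n)$; the argument of~(ii) carries over verbatim to the Lipschitz map $t\mapsto(-n)\vee(t\wedge n)$ and shows $f_n\to f$ in $W^{1,2}(\Omega,m)$. Next introduce smooth cutoffs $\phi_k\in C^\infty_c(\M^d)$ with $0\le\phi_k\le 1$, $\phi_k=1$ on $B(0,k)$, and $\|\nabla\phi_k\|_\infty$ uniformly bounded in $k$; compact support places $\phi_k$ in $W^{1,2}(\Omega,m)$, and~(iii) gives $\phi_k f_n\in W^{1,2}(\Omega,m)$, bounded and with bounded support. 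The product-rule expression $D(\phi_k f_n)=\phi_k Df_n+f_n\,D\phi_k$, combined with $\phi_k\to1$ and $D\phi_k\to0$ pointwise, yields $\phi_k f_n\to f_n$ in $W^{1,2}(\Omega,m)$ by dominated convergence as $k\to\infty$, and diagonalizing in $(k,n)$ gives the desired density.
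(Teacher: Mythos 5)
Your proof is correct and rests on the same key observation as the paper's: by the local norm equivalence~\eqref{eq:normequiv}, every claim reduces to the classical unweighted Sobolev theory on relatively compact open subsets of~$\Omega$. The only differences are cosmetic --- in~(iii) you cite the classical product rule for bounded Sobolev functions directly whereas the paper re-derives it by a mollification computation, and in~(iv) you organize the approximation as ``truncate first, cut off second'' with a diagonal sequence while the paper picks a cutoff for a prescribed~$\varepsilon$ and then invokes~(ii); both orderings work, and your use of a two-sided truncation $(-n)\vee(f\wedge n)$ is in fact slightly more careful than the paper's bare invocation of~(ii), since $f\wedge n$ alone need not be bounded below.
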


\begin{proof}
(i): Let $K$ be the support of $f$ and pick an open set $U\Subset\Omega$ with $K\subset U$. Then $\psi_\varepsilon*f\to f$ in $W^{1,2}(U,dx)$ by standard results in the unweighted case, see \cite[Lemma~3.16]{Adams/Fournier:2003}. Thus, for all sufficiently small $\varepsilon>0$ we have $\|f-\psi_\varepsilon*f\|_{W^{1,2}(\Omega,m)}=\|f-\psi_\varepsilon*f\|_{W^{1,2}(U,m)}\to 0$.

(ii): Pick a test function $\phi\in C^\infty_c(\Omega)$ with support $K$, and choose an open set $U\Subset\Omega$ with $K\subset U$. Then $f$ lies $W^{1,2}(U,dx)$, as does $g=f\wedge n$ by \cite[Lemma~1.7.1]{Mazia:1985fk}, with weak derivative $Dg=Df\1{f<n}$ on $U$. Thus
\[
\int_Ug\,\nabla\phi dx = -\int_UDf\1{f<n} \phi dx,
\]
and since $\phi=0$ outside $U$, this equality holds with $U$ replaced by $\Omega$. Hence $Dg=Df\1{f<n}$ on $\Omega$, and the result follows by monotone convergence.

(iii): First note that $fg$ and $fDg+gDf$ lie in $L^2(\Omega,m)$, so it remains to prove~\eqref{eq:Dfg}. To this end, pick a test function $\phi\in C^\infty_c(\Omega)$ with support $K$, and choose an open set $U\Subset\Omega$ with $K\subset U$. Let $\phi_\varepsilon$ be as in~(i). Then $f*\phi_\varepsilon\to f$ and $g*\phi_\varepsilon\to g$ in $W^{1,2}(U,dx)$. Hence $f*\phi_\varepsilon\,g*\phi_\varepsilon\,\to fg$, $f*\phi_\varepsilon\, \nabla(g*\phi_\varepsilon)\to fDg$ and $g*\phi_\varepsilon\, \nabla(f*\phi_\varepsilon)\to gDf$, all in $L^1(U)$, which yields
\begin{align*}
\int_U fg\, \nabla\phi dx
&= \lim_{\varepsilon\to0} \int_U f*\phi_\varepsilon\ g*\phi_\varepsilon\ \nabla\phi\ dx \\
&= - \lim_{\varepsilon\to0} \int_U \left( f*\phi_\varepsilon\, \nabla(g*\phi_\varepsilon) + g*\phi_\varepsilon\, \nabla(f*\phi_\varepsilon)\right)\phi\, dx \\
&= - \int_U \left( f Dg + gDf\right) \phi\, dx.
\end{align*}
This gives the result since $\phi=0$ outside $U$.

(iv): For $f\in W^{1,2}(\Omega,m)$ and $\varepsilon>0$, let $U\subset \Omega$ be such that $\int_{U^c}(|f|^2+\|Df\|^2)m(dx)\le\varepsilon$. Let $\phi\in C^\infty_c(M^d)$ be a smooth cutoff function with $\phi=1$ on $U\subset\M^d$ and $\|\nabla\phi\|\le 1$. Then $g=\phi f$ has bounded support, lies in $W^{1,2}(\Omega,m)$, and satisfies $\|f-g\|_{W^{1,2}(\Omega,m)}\le\varepsilon$. An application of~(ii) now yields the result.
\end{proof}

\begin{lemma} \label{L:W12fg}
Let $f\in W^{1,2}(\Omega,m)$ with $|f|\le 1$. Then for any $\varepsilon>0$ there is a constant $C>0$, depending only on $f$ and $\varepsilon$, such that
\[
\| f g \|_{W^{1,2}(\Omega,m)}^2 \le C\|g\|^2_{W^{1,2}(\Omega,m)} + \varepsilon
\]
holds for all $g\in W^{1,2}(\Omega,m)$ with $|g|\le 1$.
\end{lemma}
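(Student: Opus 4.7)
The plan is to expand the squared $W^{1,2}$-norm via the product rule, bound the pointwise terms using $|f|,|g|\le 1$, and then isolate a troublesome term involving $g^2\|Df\|^2$ which does not factor neatly through $\|g\|_{W^{1,2}}$. A truncation argument on the size of $\|Df\|$ will absorb the unwanted part into the $\varepsilon$.

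First, I would invoke Lemma~\ref{L:W12props}~(iii) to obtain $fg\in W^{1,2}(\Omega,m)$ with $D(fg)=fDg+gDf$. Using $(a+b)^2\le 2a^2+2b^2$ and the pointwise bounds $|f|\le 1$, $|g|\le 1$, this gives
\[
\|fg\|^2_{W^{1,2}(\Omega,m)} \;\le\; \|g\|^2_{L^2(\Omega,m)} + 2\|Dg\|^2_{L^2(\Omega,m)} + 2\int_\Omega g^2\,\|Df\|^2\, m(dx).
\]
The first two terms are dominated by $2\|g\|^2_{W^{1,2}(\Omega,m)}$, so it suffices to dominate the last term by $C'\|g\|^2_{W^{1,2}(\Omega,m)}+\varepsilon$ for a suitable constant $C'$ depending only on $f$ and $\varepsilon$.

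The key step is to truncate in $\|Df\|$. For $N>0$ write
\[
2\int_\Omega g^2\,\|Df\|^2\, m(dx) \;=\; 2\int_\Omega g^2\,\|Df\|^2\,\1{\|Df\|\le N}\, m(dx) + 2\int_\Omega g^2\,\|Df\|^2\,\1{\|Df\|>N}\, m(dx).
\]
The first piece is bounded by $2N^2\|g\|^2_{L^2(\Omega,m)}$. For the second piece, since $|g|\le 1$, it is bounded by $2\int_\Omega \|Df\|^2\,\1{\|Df\|>N}\, m(dx)$. Because $\|Df\|^2$ lies in $L^1(\Omega,m)$ (since $f\in W^{1,2}(\Omega,m)$), dominated convergence shows that this integral tends to $0$ as $N\to\infty$. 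Thus, given $\varepsilon>0$, I would fix $N=N(f,\varepsilon)$ large enough that this tail is at most $\varepsilon$, and then set $C=\max(2,\,1+2N^2)$, which delivers the stated inequality.

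There is no real obstacle; the only thing to notice is that one should truncate the \emph{derivative} $Df$ rather than $f$ or $g$, since neither $|f|\le 1$ nor $|g|\le 1$ gives any useful bound on the cross term $g\,Df$ through $\|g\|_{W^{1,2}}$ alone. Absolute integrability of $\|Df\|^2$ with respect to $m$ is precisely what makes the truncation work.
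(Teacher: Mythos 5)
Your proof is correct and follows essentially the same route as the paper: apply the product rule from Lemma~\ref{L:W12props}~(iii), bound the cross term $\int g^2\|Df\|^2\,m(dx)$ by truncating on the size of $\|Df\|$, and use the $L^1$-integrability of $\|Df\|^2$ (which you justify by dominated convergence, the paper simply by choosing the threshold large enough) to push the tail below~$\varepsilon$.
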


\begin{proof}
Due to Lemma~\ref{L:W12props}(iii) the product rule holds for $f$ and $g$. Therefore we have $|fg|^2 + \|D(fg)\|^2 \le |g|^2 + 2 \|Dg\|^2 + 2 |g|^2\|Df\|^2$ and hence
\[
\| f g \|_{W^{1,2}(\Omega,m)}^2 \le 2 \| g \|_{W^{1,2}(\Omega,m)}^2  + 2\int_\Omega |g|^2\,\|Df\|^2\, m(dx).
\]
Let $A=\{x\in \Omega: \| Df(x) \| \le \kappa\}$, where $\kappa$ is chosen large enough that $\int_{\Omega\setminus A}\|Df\|^2\,m(dx)\le\varepsilon/2$. Then
\[
\int_\Omega |g|^2\|Df\|^2\, m(dx) \le \kappa^2 \int_A |g|^2\, m(dx) + \int_{\Omega\setminus A} \|Df\|^2\, m(dx) \le \kappa^2 \|g\|^2_{W^{1,2}(\Omega,m)} + \varepsilon/2.
\]
The result now follows with $C=2+2\kappa^2$.
\end{proof}

\begin{lemma} \label{L:W12UVfg}
Consider open subsets $U\subset V$ and a function $\phi\in C^\infty_c(V)$ with $\phi=0$ on $V\setminus U$. Then there is a constant $C>0$ such that
\[
\| \phi g \|^2_{W^{1,2}(V,m)} \le C \| g\|^2_{W^{1,2}(U,m)}
\]
holds for all $g\in W^{1,2}(U,m)$.
\end{lemma}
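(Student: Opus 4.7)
The plan is to verify directly that $h := \phi g$, extended by zero from $U$ to $V$, lies in $W^{1,2}(V,m)$ with weak derivative given by the naive product rule $Dh = \phi Dg + g\nabla\phi$ (extended by zero), and then simply read off the asserted bound.

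First I would establish the product rule on $U$: since $\phi$ is smooth and bounded on $V$ and $g\in W^{1,2}(U,m)$, the function $\phi g$ is in $W^{1,2}(U,m)$ with $D(\phi g) = \phi Dg + g\nabla\phi$. This is standard: for any test function $\psi\in C^\infty_c(U)$ the product $\phi\psi$ again lies in $C^\infty_c(U)$, so
\[
\int_U \phi g\,\partial_i \psi\, dx = \int_U g\bigl(\partial_i(\phi\psi) - \psi\,\partial_i\phi\bigr)dx = -\int_U (\phi\,D_i g + g\,\partial_i\phi)\,\psi\, dx
\]
by applying the weak derivative of $g$ to the admissible test function $\phi\psi$.

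Next I would upgrade this to $V$. Since $\phi$ vanishes on $V\setminus U$, the extension $h$ of $\phi g$ by zero is unambiguously defined on $V$, and both $\phi g$ and $\phi Dg + g\nabla\phi$ are supported inside $\mathrm{supp}(\phi)$, a compact subset of $V$ that sits inside $U$ (because a smooth function vanishing identically on the closed set $V\setminus U$ must also vanish there to all orders, so its support cannot extend past $\partial U\cap V$). One can then verify the weak identity $Dh = \phi Dg + g\nabla\phi$ on $V$ by the usual test-function-cutoff trick: given $\psi\in C^\infty_c(V)$, pick $\eta\in C^\infty_c(U)$ with $\eta\equiv 1$ on $\mathrm{supp}(\phi)$, so that $\eta\psi\in C^\infty_c(U)$ is an admissible test function for $g$ on $U$; the terms involving $\nabla\eta$ vanish because they are supported away from $\mathrm{supp}(\phi)$, and everything collapses to the formula above.

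Finally, applying the elementary inequality $(a+b)^2\le 2a^2+2b^2$ termwise gives
\[
\|h\|^2_{W^{1,2}(V,m)} \le \|\phi\|_\infty^2\,\|g\|^2_{L^2(U,m)} + 2\|\phi\|_\infty^2\,\|Dg\|^2_{L^2(U,m)} + 2\|\nabla\phi\|_\infty^2\,\|g\|^2_{L^2(U,m)},
\]
so the desired inequality holds with $C = 2\max(\|\phi\|_\infty^2,\|\nabla\phi\|_\infty^2)$, a constant depending only on $\phi$. The only step requiring any care is the extension-by-zero identification in the middle paragraph, and it is the main (mild) obstacle; everything else is a routine norm estimate.
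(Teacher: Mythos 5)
Your overall approach — establish the product rule $D(\phi g)=\phi\,Dg + g\nabla\phi$ on $U$, note that both $\phi g$ and its gradient are supported in $\overline{U}$, and then read off the norm estimate — is exactly the paper's one-line argument, and your final inequality is essentially correct. There are, however, two issues worth flagging.

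The serious one is in your middle paragraph. You assert that ``a smooth function vanishing identically on the closed set $V\setminus U$ must also vanish there to all orders, so its support cannot extend past $\partial U\cap V$,'' and you then choose $\eta\in C^\infty_c(U)$ with $\eta\equiv 1$ on $\mathrm{supp}(\phi)$. Neither half of the parenthetical claim is correct as stated. First, vanishing identically on $V\setminus U$ does not force the derivatives of $\phi$ to vanish at $\partial U\cap V$: take $V=\R^2$, $U=\R^2\setminus\{0\}$, $\phi(x)=x_1\rho(x)$ with $\rho$ a bump; then $\phi=0$ on $V\setminus U=\{0\}$ yet $\partial_1\phi(0)\neq 0$. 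Second, even full vanishing to all orders does not keep the support inside $U$: the one-dimensional $\phi(t)=e^{-1/t}$ for $t>0$, $\phi(t)=0$ for $t\le 0$ (cut off for compact support), vanishes to all orders at $0$ but has $0\in\mathrm{supp}(\phi)$. In general $\mathrm{supp}(\phi)\subset\overline{U}$, not $\subset U$, so a cutoff $\eta\in C^\infty_c(U)$ equal to $1$ on $\mathrm{supp}(\phi)$ need not exist, and your test-function-cutoff step breaks down. (The paper's own proof is equally terse on this extension-by-zero point, and in the only place the lemma is used the cutoff $\phi$ can be taken with support compactly inside $U$, so the lemma is true in the intended regime; but the specific justification you offer is flawed. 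A correct route either strengthens the hypothesis to $\phi\in C^\infty_c(U)$, or argues with a sequence of cutoffs $\eta_\varepsilon$ and uses quantitative decay of $\phi$ near $\partial U$.)

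The minor one is the constant. Your displayed bound is
\[
\|h\|^2_{W^{1,2}(V,m)} \le \bigl(\|\phi\|_\infty^2 + 2\|\nabla\phi\|_\infty^2\bigr)\|g\|^2_{L^2(U,m)} + 2\|\phi\|_\infty^2\,\|Dg\|^2_{L^2(U,m)},
\]
for which $C=\|\phi\|_\infty^2 + 2\|\nabla\phi\|_\infty^2 \le 3\max(\|\phi\|_\infty^2,\|\nabla\phi\|_\infty^2)$ works, but your stated $C=2\max(\|\phi\|_\infty^2,\|\nabla\phi\|_\infty^2)$ is too small when $\|\phi\|_\infty\approx\|\nabla\phi\|_\infty$; the paper's $C=3\kappa$ with $\kappa=\max(\|\phi\|_\infty^2,\|\nabla\phi\|_\infty^2)$ is the right bookkeeping.
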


\begin{proof}
Let $\kappa$ denote a bound on $\phi^2$ and $\|\nabla \phi\|^2$. We then have $|\phi g|^2+\|D(\phi g)\|^2\le 3\kappa |g|^2+2\kappa\|D g\|^2$ on $U$. Since $\phi=0$ on $V\setminus U$, the result follows with $C=3\kappa$.
\end{proof}

\section{Tube segments} \label{A:tube}

The proofs of some of our results require the notion of a {\em tube segment}, which we now introduce. For a background on the relevant notions from differential geometry the reader is referred to \cite{Lee:2003}. Let $M$ be a smooth $n_1$-dimensional embedded submanifold of $\R^n$ ($n_1<n$) and set $n_2=n-n_1$. For the applications in the present paper, $\R^n$ is identified with $\M^d$, and $M$ is one of the smooth manifolds $M_k$, $k\in\{0,\ldots,d-1\}$, consisting of rank~$k$ matrices. We then have $n=d^2$, $n_1=d^2-(d-k)^2$, and $n_2=(d-k)^2$, see \cite[Proposition~4.1]{Helmke/Shayman:1992}. Note that $M$ is not closed in $\R^n$, but only locally closed.

\begin{definition} \label{D:tube}
A {\em tube segment around $\overline x \in M$} is a neighborhood $U$ of $\overline x$ in $\R^n$ together with an open ball $A\subset\R^{n_1}$ and a diffeomorphism
\[
\Phi: A\times B_1 \to U,
\]
where $B_1$ is the open unit ball in $\R^{n_2}$, such that
\begin{enumerate}
\item $\Phi(A\times\{0\}) = M\cap U$,
\item $M\cap U$ has compact closure in $M$,
\item $U$ does not intersect the frontier of $M$---that is, $U\cap (\overline M\setminus M)=\emptyset$,
\item $\Phi$ and $\Phi^{-1}$ have bounded derivatives.
\end{enumerate}
\end{definition}

\begin{proposition} \label{P:tube}
For any $\overline x\in M$, there exists a tube segment around $\overline x$.
\end{proposition}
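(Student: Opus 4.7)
The strategy is to obtain $\Phi$ as the restriction of the inverse of a \emph{slice chart} for the embedded submanifold $M$, and then to use the local closedness of $M$ (a consequence of $M$ being embedded) to arrange property~(iii). Since~$M$ is an embedded $n_1$-dimensional submanifold of $\R^n$, there exists a smooth chart $\psi: W \to \psi(W)\subset\R^n$ of $\R^n$ about $\overline x$ such that $\psi(W\cap M)=\psi(W)\cap(\R^{n_1}\times\{0\})$. By shrinking the chart I may assume $\psi(W)$ is a product of open balls $A'\times V'$ with $A'\subset\R^{n_1}$ and $V'\subset\R^{n_2}$ both centered at the origin, and $\psi(\overline x)=0$. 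Setting $\Phi_0=\psi^{-1}:A'\times V'\to W$, we get a diffeomorphism with $\Phi_0(A'\times\{0\})=M\cap W$.

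Next I will further shrink the chart to achieve property~(iii). Because embedded submanifolds are locally closed in the ambient space, there exists an open neighborhood $W_0\subset W$ of $\overline x$ such that $W_0\cap M$ is closed in $W_0$, equivalently $W_0\cap\overline M=W_0\cap M$ and hence $W_0\cap(\overline M\setminus M)=\emptyset$. Replacing $W$ by a product chart contained in $W_0$ if necessary (and renaming the corresponding product of balls back to $A'\times V'$), I may assume $\Phi_0$ is defined on $A'\times V'$ and its image lies inside $W_0$.

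Now choose a centered open ball $A\subset\R^{n_1}$ with $\overline A\subset A'$, and choose $\varepsilon>0$ small enough that $\overline{B_\varepsilon}\subset V'$. Define
\[
U=\Phi_0(A\times B_\varepsilon),\qquad \Phi=\Phi_0|_{A\times B_\varepsilon}:A\times B_\varepsilon\to U.
\]
Then $\Phi$ is a diffeomorphism by construction. For~(i): since $\psi$ maps $W\cap M$ exactly onto $\psi(W)\cap(\R^{n_1}\times\{0\})$, the points of $M$ inside $U$ correspond precisely to the slice $A\times\{0\}$, so $\Phi(A\times\{0\})=M\cap U$. For~(ii): the set $\Phi(\overline A\times\{0\})$ is a compact subset of $M$ containing $M\cap U$, which shows that the closure of $M\cap U$ in $M$ is compact. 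For~(iii): since $U\subset W_0$, we have $U\cap(\overline M\setminus M)\subset W_0\cap(\overline M\setminus M)=\emptyset$. For~(iv): $\Phi_0$ is smooth on the open set $A'\times V'$, so on the compact set $\overline A\times\overline{B_\varepsilon}\subset A'\times V'$ its derivatives are bounded, and the same applies to $\psi=\Phi_0^{-1}$ on $\overline U$ since $\overline U\subset W$ lies in the smooth domain of $\psi$.

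The main subtlety is property~(iii), since the frontier $\overline M\setminus M$ of an embedded submanifold may be non-empty (for instance, for $M=M_k$ the frontier contains $M_0,\dots,M_{k-1}$); the crucial input is that embedded submanifolds are locally closed in the ambient manifold, which allows us to shrink the chart so as to separate $\overline x$ from this frontier. Everything else is a direct consequence of the slice chart together with passing to small compactly contained product subsets to secure the boundedness in~(iv).
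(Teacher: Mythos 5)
Your proof is correct, and it takes a genuinely different route from the paper. The paper constructs $\Phi$ via the tubular neighborhood theorem: it maps $A\times B_\varepsilon$ into the normal bundle $NM$ via a local trivialization and then into $\R^n$ via the addition map $\chi(x,v)=x+v$, so that the fibers of the tube are literally normal line segments to $M$. You instead restrict the inverse of a slice chart, which is the more elementary ingredient (the local $k$-slice criterion for embedded submanifolds rather than the tubular neighborhood theorem), and you obtain property~(iii) from the fact that, in a slice chart $(W,\psi)$, $M\cap W=\psi^{-1}\bigl(\psi(W)\cap(\R^{n_1}\times\{0\})\bigr)$ is automatically closed in $W$, so $W\cap(\overline M\setminus M)=\emptyset$ — your extra invocation of local closedness to find a sub-neighborhood $W_0$ is therefore harmless but redundant, since $W$ itself already works. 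Both constructions satisfy all four conditions of Definition~\ref{D:tube}, and the downstream uses (Theorem~\ref{T:cap0}, Lemma~\ref{L:dense2}) only rely on the bounded-derivative property~(iv) and the identification $M\cap U=\Phi(A\times\{0\})$, not on the fibers being genuinely normal to $M$, so your version serves just as well; the paper's normal-bundle construction carries extra geometric structure that is not exploited here.
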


\begin{proof}
Let $NM$ denote the normal bundle of $M$, and $\chi:NM\to\R^n$ the addition map $\chi(x,v)=x+v$. Consider a {\em tubular neighborhood} of $M$, i.e.~the diffeomorphic image under $\chi$ of a set of the form
\[
T = \{(x,v)\in NM : \|v\| < \rho(x)\},
\]
with $\rho:M\to\R$ strictly positive and continuous. Tubular neighborhoods exist by \cite[Theorem~10.19]{Lee:2003}. Choose a neighborhood $V$ of $\overline x$ in $M$. Shrinking $V$ if necessary, we may assume that $V$ has compact closure in $M$, and that there exists a diffeomorphism $\psi:V\to A$ for some open ball $A\subset\R^{n_1}$. Now, set $\varepsilon=\inf_{x\in V}\rho(x)>0$ and define
\[
T_{V,\varepsilon} = \{ (x,v) \in NM : x\in V,\ \|v\| < \varepsilon \}.
\]
Let $U=\chi(T_{V,\varepsilon})$. Finally, consider the diffeomorphism $\varphi:T_{V,\varepsilon}\to A\times B_1$, $(x,v)\mapsto(\psi(x),\varepsilon^{-1}v)$. To summarize, we have the diffeomorphisms
\[
U \quad\stackrel{\chi^{-1}}{\longrightarrow}\quad
T_{V,\varepsilon} \quad\stackrel{\varphi}{\longrightarrow}\quad
A\times B_1,
\]
and thus define $\Phi = \chi\circ\varphi^{-1}$. It remains to check properties~(i)--(iv). For (i), we compute $\Phi^{-1}(M\cap U)=\varphi\circ\chi^{-1}(M\cap U)=\varphi(\{(x,0):x\in V\})=V\times\{0\}$. Property~(ii) is immediate since $M\cap U=V$ has compact closure in $M$ by construction. For~(iii), note that the inclusion $U\subset \chi(T)$ holds, and that the latter set does not intersect~$\overline M\setminus M$. Finally, by shrinking $A$ and $B_1$, and then applying a homothety to recover $B_1$, allows one to assume that (iv) holds.
\end{proof}

%%                                                               %%
%% Use the two commands below for producing your bibliography    %%
%% with bibtex, then comment again the commands and include the  %%
%% content of the .bbl file in this file below the commands.     %%
%%                                                               %%

%\bibliographystyle{amsplain}
%\bibliography{bibl}

\begin{thebibliography}{10}

\bibitem{Adams/Fournier:2003}  R.~A. Adams and J.~J.~F. Fournier, \emph{Sobolev Spaces}, vol. 140, Academic  Press, 2003.  \MR{2424078}
\bibitem{Bertoin:1990}  J.~Bertoin, \emph{Excursions of a {BES}$_0(d)$ and its drift term ($0<d<1$)},  Probab.~Theory Related Fields \textbf{84} (1990), 231--250.  \MR{1030728}
\bibitem{Bhatia/Jain:2009}  R.~Bhatia and T.~Jain, \emph{Higher order derivatives and perturbation bounds   for determinants}, Linear Algebra Appl. \textbf{431} (2009),  2102--2108.  \MR{2567816}
\bibitem{Biane/Yor:1987}  Ph. Biane and M.~Yor, \emph{Valeurs principales associ\'ees aux temps locaux   browniens}, Bull.~Sci.~Math., II.~Ser. \textbf{111} (1987), 23--101.  \MR{0886959}
\bibitem{Bru:1989}  M.-F. Bru, \emph{Diffusions of perturbed principal component analysis},  J.~Multivariate Anal. \textbf{29} (1989), 127--136.  \MR{0991060}
\bibitem{Bru:1991}  \bysame, \emph{Wishart processes}, J.~Theoret.~Probab.  \textbf{4} (1991), no.~4, 725--751.  \MR{1132135}
\bibitem{Chua:1992}  S.-K. Chua, \emph{Extension theorems on weighted {S}obolev spaces}, Indiana  Math.~J. \textbf{41} (1992), 1027--1076.  \MR{1206339}
\bibitem{Cuchiero:2011uq}  C.~Cuchiero, D.~Filipovic, E.~Mayerhofer, and J.~Teichmann,  \emph{Affine processes on positive semidefinite matrices}, Ann.~Appl.~Probab. \textbf{21} (2011), no.~2, 397--463.  \MR{2807963}
\bibitem{DaFonseca/etal:2008}
  J.~Da~Fonseca, M.~Grasselli, and F.~Ielpo, \emph{Option pricing when
    correlations are stochastic: an analytic framework}, Rev.~Deriv.~Res.
  \textbf{10} (2007), 151--180.
\bibitem{Donati-Martin:2008}  C.~Donati-Martin, \emph{Large deviations for {W}ishart processes},  Probab.~Math.~Statist. \textbf{28} (2008), 325--343.  \MR{2548976}
\bibitem{Donati-Martin:2004vn}  C~Donati-Martin, Y~Doumerc, H~Matsumoto, and M~Yor, \emph{Some properties of   the Wishart processes and a matrix extension of the Hartman-Watson laws},  Publ. Res. Inst. Math. Sci. \textbf{40}  (2004), no.~4, 1385--1412.  \MR{2105711}
\bibitem{Drusvyatskiy/Larsson:2015}  D.~Drusvyatskiy and M.~Larsson, \emph{Approximating functions on stratified   sets}, Trans. Amer. Math. Soc. \textbf{367} (2015),  725--749.  \MR{3271275}
\bibitem{Eberle:1999uq}  A.~Eberle, \emph{Uniqueness and Non-Uniqueness of Semigroups Generated by   Singular Diffusion Operators}, Lecture notes in mathematics, vol. 1718,  Springer, Berlin, 1999.  \MR{1734956}
\bibitem{Faraut:2008fk}  J.~Faraut, \emph{Analysis on Lie Groups: An Introduction}, vol. 110,  Cambridge University Press, Cambridge, UK, 2008.  \MR{2426516}
\bibitem{FilipovicLarsson15}
  D.~Filipovi\'c and M.~Larsson, \emph{Polynomial preserving diffusions
    and applications in finance}, Tech. report, EPFL and Swiss Finance Institute,
  2015.
\bibitem{Fukushima:2011ys}  M.~Fukushima, Y.~Oshima, and M.~Takeda, \emph{Dirichlet Forms   and Symmetric Markov Processes}, 2nd rev. and ext. ed ed., De Gruyter studies in mathematics, vol.~19, De Gruyter, Berlin, 2011.  \MR{2778606}
\bibitem{Gourieroux/Sufana:2007}
  C.~Gouri\'eroux and R.~Sufana, \emph{Wishart quadratic term structure models},
  Working paper, CREST, CEPREMAP and University of Toronto, 2007.
\bibitem{Graczyk/Malecki:2012}
  P.~Grazcyk and J.~Malecki, \emph{Multidimensional {Y}amada-{W}atanabe theorem
    and its applications to particle systems}, J.~Math.~Phys. \textbf{54} (2013).
\bibitem{Graczyk/Vostrikova:2007}  P.~Grazcyk and L.~Vostrikova, \emph{The moments of wishart processes via   {I}t\^o calculus}, Theory Probab.~Appl. \textbf{51} (2007), 609--625.  \MR{2338064}
\bibitem{Heinonen:2006fk}  J.~Heinonen, T.~Kilpel\"ainen, and O.~Martio, \emph{Nonlinear Potential Theory   of Degenerate Elliptic Equations}, Dover Publications, Mineola, NY, 2006.  \MR{2305115}
\bibitem{Helmke/Shayman:1992}
  U.~Helmke and M.~A. Shayman, \emph{Critical points of matrix least square
    distance functions}, Tech. report, Systems Research Center, University of
  Maryland, College Park, 1992.
\bibitem{Horn/Johnson:1985}  R.A. Horn and C.A. Johnson, \emph{Matrix Analysis}, Cambridge University Press,  1985.  \MR{0832183}
\bibitem{Jones:1981}  P.~W. Jones, \emph{Quasiconformal mappings and extendability of functions in   {S}obolev spaces}, Acta Math. \textbf{147} (1981), no.~1, 71--88.  \MR{0631089}
\bibitem{Kilpelainen:1994}  T.~Kilpel\"ainen, \emph{Weighted {S}obolev spaces and capacity}, Ann. Acad. Sci. Fenn. Math. \textbf{19}  (1994), 95--113.  \MR{1246890}
\bibitem{Kufner/Opic:1984}  A.~Kufner and B.~Opic, \emph{How to define reasonably weighted sobolev spaces},  Comment. Math. Univ. Carolin. \textbf{25} (1984),  no.~3, 537--554.  \MR{0775568}
\bibitem{Lee:2003}  J.M. Lee, \emph{Introduction to Smooth Manifolds}, vol. 218, Springer, New  York, 2003.  \MR{1930091}
\bibitem{Mayerhofer:2011kx}  E.~Mayerhofer, O.~Pfaffel, and R.~Stelzer, \emph{On strong   solutions for positive definite jump diffusions}, Stochastic Process. Appl. \textbf{121} (2011), no.~9, 2072--2086.  \MR{2819242}
\bibitem{Mazia:1985fk}
  V.~G Mazia, \emph{Sobolev spaces}, 2nd rev. and ext. ed ed.,
  Springer-Verlag, Berlin, 2011.
\bibitem{Muckenhoupt:1972}  B.~Muckenhoupt, \emph{Weighted norm inequalities for the {H}ardy maximal   function}, Trans.~Amer.~Math.~Soc. \textbf{165} (1972), 207--226. \MR{0293384}

\bibitem{Muirhead:1982fk} R.~J.~Muirhead, \emph{Aspects of Multivariate Statistical Theory}, Wiley, New  York, 1982. \MR{0652932}

\bibitem{Revuz:1999zr} D~Revuz and M.~Yor, \emph{Continuous Martingales and Brownian Motion}, 3rd ed., vol. 293, Springer, Berlin, 1999. \MR{1725357}

\bibitem{Rogers:2000ve}
 L.~C.~G Rogers and D~Williams, \emph{Diffusions, Markov Processes, and Martingales}, 2nd ed., Cambridge University Press, Cambridge, U.K., 2000.

\bibitem{Torchinsky:1986fk} A.~Torchinsky, \emph{Real-Variable Methods in Harmonic Analysis}, Pure and  applied mathematics, vol. 123, Academic Press, Orlando, 1986. \MR{0869816}

\bibitem{Turesson:2000} B.~O. Turesson, \emph{Nonlinear Potential Theory and Weighted Sobolev Spaces},  Springer-Verlag, 2000. \MR{1774162}

\end{thebibliography}

% add below the content of your .bbl file produced by bibtex.

\providecommand{\bysame}{\leavevmode\hbox to3em{\hrulefill}\thinspace}
\providecommand{\MR}{\relax\ifhmode\unskip\space\fi MR }
% \MRhref is called by the amsart/book/proc definition of \MR.
\providecommand{\MRhref}[2]{%
  \href{http://www.ams.org/mathscinet-getitem?mr=#1}{#2}
}
\providecommand{\href}[2]{#2}

\end{document}